\documentclass[12pt]{amsart}

\usepackage{amsmath,amssymb,amscd,amsxtra,upref,stmaryrd}
\usepackage{latexsym}
\usepackage{mathtools}
\usepackage[dvips]{graphics,epsfig}
\usepackage{enumerate}
\usepackage[colorlinks=true, pdfstartview=FitV, linkcolor=blue, citecolor=blue, urlcolor=blue]{hyperref}
\usepackage{amsthm,amsfonts,color,esint,graphicx, float, mathabx}



\usepackage[margin=1in]{geometry}

\newtheorem{theorem}{Theorem}[section]
\newtheorem{lemma}[theorem]{Lemma}

\newtheorem{corollary}[theorem]{Corollary}



\theoremstyle{remark}

\newtheorem{remark}{Remark}[section]

\newcommand{\com}{\mathbb{C}}
\newcommand{\na}{\mathbb{N}}
\newcommand{\naz}{\mathbb{N}_0}
\newcommand{\re}{\mathbb{R}}
\newcommand{\ent}{\mathbb{Z}}

\newcommand{\rn}{{{\mathbb R}^n}}
\newcommand{\rtn}{\re^{2n}}

\newcommand{\sw}{{\mathcal{S}}(\rn)}
\newcommand{\swp}{{\mathcal{S}'}(\rn)}
\newcommand{\swz}{{\mathcal{S}_0}(\rn)}

\newcommand{\tlw}[4]{\dot F_{#1,#3}^{#2}(#4)} 
\newcommand{\tl}[3]{\dot F_{#1,#3}^{#2}} 
\newcommand{\itlw}[4]{F_{#1,#3}^{#2}(#4)} 
\newcommand{\itl}[3]{F_{#1,#3}^{#2}} 
\newcommand{\besw}[4]{\dot B_{#1,#3}^{#2}(#4)} 
\newcommand{\bes}[3]{\dot B_{#1,#3}^{#2}} 
\newcommand{\ibesw}[4]{B_{#1,#3}^{#2}(#4)} 
\newcommand{\ibes}[3]{B_{#1,#3}^{#2}} 

\newcommand{\lebw}[2]{L^{#1}(#2)} 

\newcommand{\Lp}{L^{p(\cdot)}} 

\newcommand{\pp}{{p(\cdot)}}

\newcommand{\ppo}{{p_1(\cdot)}}
\newcommand{\ppt}{{p_2(\cdot)}}

\newcommand{\mow}[3]{M_{#1}^{#2}(#3)}

\newcommand{\C}{\mathcal{C}}
\newcommand{\M}{\mathcal{M}} 

\newcommand{\Ss}{\mathcal{S}}
\newcommand{\B}{\mathcal{B}}
\def\P{\mathcal P}

\newcommand{\f}{\varphi}

\newcommand{\fhat}{\widehat{f}}
\newcommand{\ghat}{\widehat{g}}
\newcommand{\eixxi}{e^{2\pi i x \cdot \xi}}
\newcommand{\eixeta}{e^{2\pi i x \cdot \eta}}

\newcommand{\eixxe}{e^{2\pi i x \cdot (\xi + \eta)}}

\newcommand{\dx}{\, dx}

\newcommand{\dxi}{\, d\xi}
\newcommand{\deta}{\, d\eta}

\newcommand{\Do}[2]{\Delta^{#1}_{#2}}
\newcommand{\So}[2]{S^{#1}_{#2}}

\newcommand{\abs}[1]{\left\vert #1 \right\vert}
\newcommand{\fr}[2]{{\textstyle \frac{#1}{#2}}}
\newcommand{\norm}[2]{\left\|#1\right\|_{#2}}
\newcommand{\supp}{{\rm{supp}}}

\newcommand{\hcline}{1/p=1/p_1+1/p_2}

\newcommand{\A}{D}
\newcommand{\ex}{d}
\newcommand{\gamt}{\gamma_{p_1,p_2,p,q}^{w_1,w_2, tl}}
\newcommand{\gamb}{\gamma_{p_1,p_2,p,q}^{w_1,w_2,b}}

\DeclareMathOperator*{\essinf}{ess\,inf}
\DeclareMathOperator*{\esssup}{ess\,sup}

\begin{document}

\title[Leibniz-type rules and applications]{Coifman--Meyer multipliers: Leibniz-type rules and applications to scattering of solutions to PDEs.}

\author{Virginia Naibo \and Alexander Thomson}

\address{Virginia Naibo, Department of Mathematics, Kansas State University.
138 Cardwell Hall, 1228 N. 17th Street, Manhattan, KS  66506, USA.}
\email{vnaibo@ksu.edu}

\address{Alexander Thomson, Department of Mathematics, Kansas State University.
138 Cardwell Hall, 1228 N. 17th Street, Manhattan, KS  66506, USA.}
\email{thomson521@ksu.edu}

\thanks{The authors are  partially supported by the NSF under grant DMS 1500381.}

\subjclass[2010]{Primary: 42B25, 42B15.  Secondary: 42B20, 46E35.}


\date{\today}

\keywords{Fractional Leibniz rules, Kato--Ponce inequalities, Coifman--Meyer multipliers, weighted Triebel--Lizorkin and Besov spaces, scattering of solutions to PDEs}

\begin{abstract}

Leibniz-type rules for Coifman--Meyer multiplier operators are studied in the settings of  Triebel--Lizorkin  and Besov spaces associated to weights in the Muckenhoupt classes. Even in the unweighted case,  improvements  on the currently known estimates are obtained. The flexibility of the methods of proofs allows to prove Leibniz-type rules  in a variety of function spaces that include Triebel--Lizorkin and Besov spaces based on weighted Lebesgue, Lorentz  and Morrey spaces as well as variable-exponent Lebesgue spaces. Applications to scattering properties of solutions to certain systems of partial differential equations involving fractional powers of the Laplacian are presented. 
\end{abstract}

\maketitle


\section{Introduction}

The well-known  fractional Leibniz rules or Kato--Ponce inequalities state that for all $f,g\in \sw$ it holds that
\begin{align}
\norm{D^s(fg)}{L^p} &\lesssim \norm{D^s f}{L^{p_1}} \norm{g}{L^{p_2}} +  \norm{f}{L^{p_1}}   \norm{D^s g}{L^{p_2}},\label{KPH:Lp}\\
\norm{J^s(fg)}{L^p} &\lesssim \norm{J^s f}{L^{p_1}} \norm{g}{L^{p_2}} +  \norm{f}{L^{p_1}}   \norm{J^s g}{L^{p_2}},\label{KPI:Lp}
\end{align}
where $\widehat{D^sf}(\xi)=\abs{\xi}^s\fhat(\xi),$ $\widehat{J^sf}(\xi)=(1+\abs{\xi}^2)^{s/2}\fhat(\xi),$ $s>\tau_p:=n(1/\min(p,1)-1)$ or $s\in 2\na,$ $\hcline,$  $1<p_1,p_2\le \infty$ and $1/2<p\le \infty;$ different pairs of $p_1,$ $p_2$ can be used on the right-hand sides of the inequalities.
Applications of such estimates appear in the study of solutions to partial differential equations such as Euler and Navier-Stokes equations (Kato--Ponce~\cite{MR951744}) and the Korteweg--de~Vries equations (Christ--Weinstein~\cite{MR1124294}, Kenig--Ponce--Vega~\cite{MR1211741}), as well as in investigations of smoothing properties of Schr\"odinger semigroups (Gulisashvili--Kon~\cite{MR1420922}). The reader is referred to the work of  Grafakos--Oh~\cite{MR3200091} and Muscalu--Schlag~\cite{MR3052499} (see also Koezuka--Tomita~\cite{MR3750316}) for the cases corresponding to $1/2< p\le 1$ and to  Bourgain--Li~\cite{MR3263081} (see also Grafakos--Maldonado--Naibo~\cite{MR3189525}) for the case $p=\infty.$

Estimates closely related to \eqref{KPH:Lp} and \eqref{KPI:Lp} where the product $fg$ is replaced by $T_\sigma(f,g)$  have also been  studied; the operator $T_\sigma$ is a bilinear pseudodifferential operator associated to a bilinear symbol $\sigma=\sigma(x,\xi,\eta),$ $x,\xi,\eta\in\rn,$ or a bilinear multiplier $\sigma=\sigma(\xi,\eta),$  $\xi,\eta\in\rn,$ and is given by
$$
T_\sigma(f,g)(x)=\int_{\rtn} \sigma(x,\xi,\eta)\fhat(\xi)\ghat(\eta) \eixxe\dxi\deta.
$$
Such estimates take the form 
\begin{align}
\norm{D^sT_\sigma(f,g)}{Z}\lesssim \norm{D^sf}{X_1} \norm{g}{Y_1} +\norm{f}{X_2}\norm{D^sg}{Y_2},\label{KPH:gral}\\
\norm{J^sT_\sigma(f,g)}{Z}\lesssim \norm{J^sf}{X_1} \norm{g}{Y_1} +\norm{f}{X_2}\norm{J^sg}{Y_2},\label{KPI:gral}
\end{align}
where $Z,$ $X_1,$ $Y_1,$ $X_2$ and $Y_2$ represent various function spaces.
For instance,   Brummer--Naibo~\cite{MR3750234} studied Leibniz-type rules for bilinear pseudodifferential operators  with symbols in certain homogeneous bilinear H\"ormander classes in the setting of function spaces that admit a molecular decomposition and a $\varphi$-transform in the sense of  Frazier--Jawerth~\cite{MR808825,MR1070037};  estimates of the type \eqref{KPH:gral} were proved in Hart--Torres--Wu~\cite{MR3864388} in the context of Lebesgue spaces and mixed Lebesgue spaces for bilinear multiplier operators under  minimal smoothness assumptions on the multipliers; related mapping properties for bilinear pseudodifferential operators with symbols in certain bilinear H\"ormander classes were studied in B\'enyi~\cite{MR1996120} and Naibo--Thomson~\cite{MR3912862} in the scale of Besov spaces, in B\'enyi--Torres~\cite{MR1986065} and B\'enyi--Nahmod--Torres~\cite{MR2250054} in the setting of Sobolev spaces, and in Naibo~\cite{MR3393696}  and Koezuka--Tomita~\cite{MR3750316} in the context of Besov and Triebel--Lizorkin spaces.

Weighted versions of \eqref{KPH:Lp}, \eqref{KPI:Lp}, \eqref{KPH:gral} and  \eqref{KPI:gral} have also been obtained in the context of Lebesgue spaces associated with weights in the Muckenhoupt classes. Indeed, Cruz-Uribe--Naibo~\cite{MR3513582}  proved \eqref{KPH:Lp} and  \eqref{KPI:Lp} for the same range of finite parameters and corresponding weighted spaces $\lebw{p}{w},$ $\lebw{p_1}{w_1}$ and $\lebw{p_2}{w_2}$ with $w_1\in A_{p_1},$ $w_2\in A_{p_2}$ and $w=w_1^{p/p_1}w_2^{p/p_2},$ where $A_{p_1}$ and $A_{p_2}$ denote Muckenhoupt classes. The results  in \cite{MR3513582} also include, among other things,  fractional Leibniz rules in the  setting of weighted Lorentz spaces, Morrey spaces and variable-exponent Lebesgue spaces. On the other hand, Brummer--Naibo~\cite{BrNa2017} proved versions of \eqref{KPH:gral} and  \eqref{KPI:gral} in weighted Lebesgue spaces for bilinear Coifman--Meyer multiplier operators and biparameter Coifman--Meyer multiplier operators.

Current techniques for proving fractional Leibniz rules, as well as some of its extensions to bilinear operators $T_\sigma,$ include the analysis of paraproducts, the mapping properties of bilinear Coifman-Meyer multipliers, uniform estimates for square functions,  vector-valued Fourier multiplier theorems and the use of molecular decompositions. 

The purpose of this article is to use a rather straightforward and quite flexible method for proving  inequalities closely related to \eqref{KPH:gral} and  \eqref{KPI:gral}  in the settings of weighted Triebel-Lizorkin and Besov spaces, with weights in the Muckenhoupt class $A_\infty,$ by means of the function spaces' Nikol'skij representations. The use of such representations in unweighted settings goes back, for instance, to the work of Nikol'skij~\cite{MR0374877}, Meyer~\cite{MR639462}, Bourdaud~\cite{MR673825},  Triebel~\cite{MR3024598}, and  Yamazaki~\cite{MR837335}. We implement the method  for Coifman-Meyer multiplier operators of arbitrary order (see Theorems~\ref{thm:CM:TL:B} and \ref{thm:ICM:TL:B}) and obtain as particular cases improved versions of \eqref{KPH:Lp} and \eqref{KPI:Lp} as well as a number of results that complement and extend the weighted estimates proved in \cite{MR3513582} and \cite{BrNa2017}. As an application, we prove scattering properties for solutions to certain systems of partial differential equations involving fractional powers of the Laplacian  (see Theorems~\ref{thm:scattering} and~\ref{thm:scattering2}). We also show that the flexibility of the methods of proofs allows to obtain estimates of the types \eqref{KPH:gral} and \eqref{KPI:gral} in a variety of function spaces that include Triebel--Lizorkin and Besov spaces based on weighted Lorentz spaces, weighted Morrey spaces and variable-exponent Lebesgue spaces.

The organization of the manuscript is as follows. The statements of the main results and applications are presented in Section~\ref{sec:results}. Notation, definitions and some preliminary results   are given in Section~\ref{sec:prelim}. The proofs of the main results are included in Section~\ref{sec:mainproofs} while the applications are proved in Section~\ref{sec:applications}. In Section~\ref{sec:more}, we illustrate the fact that the strategy applied in the proofs of the results stated in Section~\ref{sec:results} constitutes a unifying approach for obtaining Leibniz-type rules for Coifman--Meyer multiplier operators in a variety of function spaces. Finally, Appendix~\ref{sec:appendix} cotains the main steps in the proofs of Nikol'skij representations for  weighted Triebel--Lizorkin and Besov spaces.

\section{Main results and applications}\label{sec:results}

In this section we present the main results of the manuscript, which will then be complemented in Section~\ref{sec:more}. We refer the reader to Section~\ref{sec:prelim}  for notations and definitions. Briefly,  $\sw$ is the Schwartz class of smooth rapidly decreasing functions defined on $\rn$ and $\swz$ is the subspace of functions in  $\sw$ that have vanishing moments of all orders. Given $w\in A_\infty$ defined on $\rn,$ $0<p,q\le \infty$ and $s\in \re,$ $\tlw{p}{s}{q}{w}$ and $\besw{p}{s}{q}{w}$ refer, respectively, to the weighted homogeneous  Triebel--Lizorkin spaces and Besov spaces on $\rn$ associated to the weight $w;$ the notations $\itlw{p}{s}{q}{w}$ and $\ibesw{p}{s}{q}{w}$ are used for their inhomogeneous counterparts. For $0<p<\infty,$ $H^p(w)$ and $h^p(w)$ denote, respectively, weighted Hardy spaces and weighted local Hardy spaces on $\rn $ associated to $w.$

\subsection{Weighted Leibniz-type rules for Coifman-Meyer multiplier operators}\label{sec:kpcm}

For $m \in \re$, a smooth function $\sigma=\sigma(\xi,\eta),$ $\xi,\eta\in\rn,$  is called a bilinear Coifman-Meyer multiplier of order $m$ if for all multi-indices $\alpha, \beta \in \na_0^n$ there exists a positive constant $C_{\alpha, \beta}$ such that
\begin{equation}\label{eq:CMm}
|\partial_\xi^\alpha \partial_\eta^\beta \sigma(\xi, \eta)| \leq C_{\alpha, \beta} (|\xi|+|\eta|)^{m -(\abs{\alpha}+ \abs{\beta})} \quad \forall (\xi, \eta) \in \re^{2n} \setminus \{(0,0)\}.
\end{equation}

For $w\in A_\infty,$  let $\tau_w=\inf\{\tau\in (1,\infty): w\in A_\tau\};$ given  $0<p,q\le \infty$  denote 
\begin{equation*}
\tau_{p,q}(w) := n \left(\frac{1}{\min(p/\tau_w,q,1)} - 1 \right)  \quad \text{and}\quad \tau_{p}(w):=n\left(\frac{1}{\min(p/\tau_w,1)}-1\right).
\end{equation*} 
If $w\equiv 1,$ in which case $\tau_w=1,$ we just write $\tau_{p,q}$  and $\tau_p,$ respectively. Note  that $\tau_{p,2}(w)=\tau_p(w),$ $\tau_{p,q}(w)\ge \tau_{p,q}$ and $\tau_p(w)\ge \tau_p$ for any $w\in A_\infty.$

Our first main result consists of the following  Leibniz-type rules for Coifman-Meyer multiplier operators in weighted homogeneous Triebel-Lizorkin spaces, weighted homogeneous Besov spaces and weighted Hardy spaces.  As  we will see, improvements of \eqref{KPH:Lp} as well as extensions of known weighted versions of \eqref{KPH:Lp} will be obtained as corollaries of this result (see Remark~\ref{re:improvement}).

\begin{theorem}\label{thm:CM:TL:B}  For $m \in \re,$ let $\sigma(\xi,\eta),$ $\xi,\eta\in\rn,$ be a Coifman-Meyer multiplier of order $m.$ Consider  $0 < p, p_1, p_2  \le \infty$  such that $\hcline$ and  $0 < q \leq \infty;$ let  $w_1,w_2\in A_\infty$ and set $w=w_1^{{p}/{p_1}} w_2^{{p}/{p_2}}.$ 
If $0 < p,p_1,p_2 < \infty$ and  $s > \tau_{p,q}(w),$  it holds that
\begin{equation}\label{KP:CM:TL}
\norm{T_\sigma(f,g)}{\tlw{p}{s}{q}{w}} \lesssim \norm{f}{\tlw{p_1}{s+m}{q}{w_1} } \norm{g}{H^{p_2}(w_2)} +  \norm{f}{H^{p_1}(w_1)}   \norm{g}{\tlw{p_2}{s+m}{q}{w_2} } \quad \forall f, g \in \swz.
\end{equation}
If $0< p, p_1,p_2\leq \infty$ and $s > \tau_p(w)$, it holds that
\begin{equation}\label{KP:CM:B}
\norm{T_\sigma(f,g)}{\besw{p}{s}{q}{w}} \lesssim \norm{f}{\besw{p_1}{s+m}{q}{w_1} } \norm{g}{H^{p_2}(w_2)} +  \norm{f}{H^{p_1}(w_1)}   \norm{g}{\besw{p_2}{s+m}{q}{w_2} } \quad \forall f, g \in \swz,
\end{equation}
where the Hardy spaces $H^{p_1}(w_1)$ and $H^{p_2}(w_2)$ must be replaced by $L^\infty$ if $p_1=\infty$ or $p_2=\infty,$ respectively.

If $w_1=w_2$ then different pairs of $p_1, p_2$ can be used on the right-hand sides of \eqref{KP:CM:TL} and \eqref{KP:CM:B}; moreover, if $w\in A_\infty,$ then 
\begin{equation}\label{KP:CM:TL2}
\norm{T_\sigma(f,g)}{\tlw{p}{s}{q}{w}} \lesssim \norm{f}{\tlw{p}{s+m}{q}{w} } \norm{g}{L^\infty} +  \norm{f}{L^\infty}   \norm{g}{\tlw{p}{s+m}{q}{w}} \quad \forall f, g \in \swz,
\end{equation}
where $0<p<\infty,$ $0<q\le\infty$ and $s>\tau_{p,q}(w).$

\end{theorem}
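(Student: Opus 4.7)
The plan is to decompose $T_\sigma$ into three frequency-localized paraproducts and estimate each via the Nikol'skij representations of $\tlw{p}{s}{q}{w}$ and $\besw{p}{s}{q}{w}$ proved in Appendix~\ref{sec:appendix}. Fix a homogeneous dyadic partition $\{\widehat{\varphi}_j\}_{j\in\ent}$ with $\widehat{\varphi}_j$ supported in $\{2^{j-1}\le|\xi|\le 2^{j+1}\}$ and a smooth $\widehat{\Phi}$ equal to $1$ on the unit ball and vanishing outside $\{|\xi|\le 2\}$. For $N$ sufficiently large, split
\[
\sigma(\xi,\eta)=\sum_{j\in\ent}\bigl(\sigma_{j}^{1}(\xi,\eta)+\sigma_{j}^{2}(\xi,\eta)+\sigma_{j}^{3}(\xi,\eta)\bigr),
\]
where $\sigma_{j}^{1}=\sigma\,\widehat{\varphi}_j(\xi)\,\widehat{\Phi}(2^{N-j}\eta)$ carries the high--low interactions, $\sigma_{j}^{2}(\xi,\eta)=\sigma_{j}^{1}(\eta,\xi)$, and $\sigma_{j}^{3}$ is supported on $|\xi|\sim|\eta|\sim 2^{j}$. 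The Coifman--Meyer condition of order $m$ transfers to each $\sigma_{j}^{i}$ with constants adapted to the scale $2^{j}$, and setting $\Pi_{i}(f,g)=\sum_{j}T_{\sigma_{j}^{i}}(f,g)$ reduces the task to bounding each $\Pi_{i}(f,g)$ in the target norm.

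For $\Pi_{1}$ (and $\Pi_{2}$ by symmetry), the Fourier transform of $A_j:=T_{\sigma_{j}^{1}}(f,g)$ is supported in an annulus $\{|\zeta|\sim 2^{j}\}$, since $|\xi|\sim 2^{j}$ together with $|\eta|\lesssim 2^{j-N}$ forces $|\xi+\eta|\sim 2^{j}$. Standard kernel estimates derived from the symbol bounds on $\sigma_{j}^{1}$ yield a pointwise bound
\[
|A_j(x)|\;\lesssim\;2^{jm}\,M_{r}(\varphi_j*f)(x)\,g^{*}(x),
\]
where $M_{r}h=(M(|h|^{r}))^{1/r}$, $g^{*}$ is a grand maximal function satisfying $\norm{g^{*}}{L^{p_2}(w_2)}\sim\norm{g}{H^{p_2}(w_2)}$, and $r>1$ is chosen (depending on $p_i,q,w_i$) so that the scalar and vector-valued weighted Fefferman--Stein maximal inequalities apply. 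Inserting this into the annular Nikol'skij representation of $\tlw{p}{s}{q}{w}$, applying H\"older's inequality in $L^{p}(w)=L^{p}(w_1^{p/p_1}w_2^{p/p_2})$, and invoking the vector-valued Fefferman--Stein inequality on $L^{p_1}(w_1;\ell^{q})$, delivers
\[
\norm{\Pi_{1}(f,g)}{\tlw{p}{s}{q}{w}}\;\lesssim\;\norm{f}{\tlw{p_1}{s+m}{q}{w_1}}\,\norm{g}{H^{p_2}(w_2)},
\]
which is one summand on the right-hand side of \eqref{KP:CM:TL}; $\Pi_{2}$ furnishes the other.

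The resonant piece $\Pi_{3}$ is the main obstacle. Here $A_j=T_{\sigma_{j}^{3}}(f,g)$ has Fourier support only in the \emph{ball} $\{|\zeta|\lesssim 2^{j}\}$, not in an annulus, so the annular Nikol'skij representation no longer applies directly; one must invoke the ball-localized version, whose validity on $\tlw{p}{s}{q}{w}$ and $\besw{p}{s}{q}{w}$ requires precisely $s>\tau_{p,q}(w)$ and $s>\tau_{p}(w)$, respectively. Establishing this representation in Appendix~\ref{sec:appendix} is what pins down the thresholds appearing in the statement. A pointwise bound analogous to the one above, followed by H\"older's inequality and the weighted Fefferman--Stein inequality, closes the estimate for $\Pi_{3}$. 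The Besov statement \eqref{KP:CM:B} proceeds along the same scheme with $L^{p_i}(w_i;\ell^{q})$ replaced by $\ell^{q}(L^{p_i}(w_i))$, so that only \emph{scalar} weighted maximal inequalities are required at each dyadic level; this is precisely why $\tau_{p}(w)$ is $q$-independent and the restriction $p_i<\infty$ may be relaxed. The endpoints $p_i=\infty$ are handled by the trivial bound $\norm{M_{r}g}{L^\infty}\lesssim\norm{g}{L^\infty}$, replacing the Hardy space by $L^\infty$. Finally, \eqref{KP:CM:TL2} follows from \eqref{KP:CM:TL} with $w_1=w_2=w$ by specializing to $(p_1,p_2)=(p,\infty)$ and its symmetric counterpart.
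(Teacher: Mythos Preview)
Your overall architecture---Nikol'skij representation, pointwise maximal-function control, H\"older, weighted Fefferman--Stein---matches the paper's, but the decomposition of $T_\sigma$ is different. The paper does not use a three-piece paraproduct. Instead it invokes the Coifman--Meyer Fourier-series expansion (Section~\ref{sec:decomp}) to write $T_\sigma=T_\sigma^1+T_\sigma^2$, where
\[
T_\sigma^1(f,g)=\sum_{a,b\in\ent^n}\frac{1}{(1+|a|^2+|b|^2)^N}\sum_{j\in\ent}\C_j(a,b)\,(\Do{\tau_a\Psi}{j}f)(\So{\tau_b\Phi}{j}g),\qquad |\C_j(a,b)|\lesssim 2^{jm},
\]
and $T_\sigma^2$ is the symmetric piece. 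Each building block already has Fourier support in a ball $\{|\zeta|\lesssim 2^j\}$, so only the ball-localized Nikol'skij representation (Theorem~\ref{thm:Nikolskij:weighted}) is used throughout; there is no separate annular step. The translated Littlewood--Paley operators are handled by Lemma~\ref{lem:pointineq}, which produces a factor $(1+|a|)^{\varepsilon+n/r_1}(1+|b|)^{\varepsilon+n/r_2}$ absorbed by the outer sum in $a,b$. Your direct kernel/paraproduct route is a reasonable alternative and should reach the same conclusion; it trades the Fourier-series bookkeeping for kernel estimates, and your distinction between annular and ball support is correct but does not buy a weaker hypothesis on $s$, since the resonant piece already forces $s>\tau_{p,q}(w)$.

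A few points need repair. First, ``$r>1$'' is the wrong direction: the weighted Fefferman--Stein inequality for $\M_r$ on $L^{p_1}(w_1;\ell^q)$ requires $0<r<\min(p_1/\tau_{w_1},q)$, and the Peetre-type pointwise bound (Lemma~\ref{coro:2:11}) needs $r\le 1$; since $p_i,q$ may be below $1$, $r$ must be small. Second, the pointwise bound $|A_j|\lesssim 2^{jm}\M_r(\varphi_j*f)\,g^*$ with a bare $g^*$ is not immediate: the kernel convolution in the low-frequency variable yields something like $\M_{r'}(\sup_k|S_k g|)$ rather than $g^*$ itself, which is fine (choose $r'<p_2/\tau_{w_2}$) but should be stated. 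Third, \eqref{KP:CM:TL2} does not literally follow from \eqref{KP:CM:TL} by setting $p_2=\infty$, since \eqref{KP:CM:TL} is stated only for finite $p_1,p_2$; one has to rerun the argument at that endpoint, replacing $\norm{\sup_j|S_j g|}{L^{p_2}(w_2)}$ by $\norm{g}{L^\infty}$ directly, as the paper does.
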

\begin{remark} If $m\ge 0,$ the estimates above hold true for any $f,g\in \sw$ as long as  $\sw$ is a subspace of the function spaces appearing on the right-hand sides. Such is the case if $1<p_1,p_2<\infty,$ $w_1\in A_{p_1}$ and $w_2\in A_{p_2}$ for  \eqref{KP:CM:TL} and \eqref{KP:CM:B} and if $1<p<\infty$ and $w\in A_p$ for \eqref{KP:CM:TL2}. An analogous   remark applies to the corollaries given below. 
\end{remark}

By means of the lifting property for the weighted homogeneous Triebel-Lizorkin spaces and their relation to weighted Hardy spaces (see Section~\ref{sec:spaces}), the estimates \eqref{KP:CM:TL} and \eqref{KP:CM:TL2} imply the following Leibniz-type rules in the scale of weighted Hardy spaces for operators associated to  Coifman-Meyer multipliers of order zero.

\begin{corollary}\label{coro:KP:CM:Hardy}  Let $\sigma(\xi,\eta),$ $\xi,\eta\in\rn,$ be a Coifman-Meyer multiplier of order $0.$ 
Consider  $0 < p, p_1, p_2  < \infty$  such that $\hcline;$ let  $w_1,w_2\in A_\infty$ and set $w=w_1^{{p}/{p_1}} w_2^{{p}/{p_2}}.$ 
If  $s > \tau_p(w),$ it holds that
\begin{equation}\label{KP:CM:Hardy}
\norm{D^s(T_\sigma(f,g))}{H^p(w)} \lesssim \norm{D^s f}{H^{p_1}(w_1)} \norm{g}{H^{p_2}(w_2)} +  \norm{f}{H^{p_1}(w_1)}   \norm{D^s g}{H^{p_2}(w_2)} \quad \forall f, g \in \swz.
\end{equation}
If $w_1=w_2$ then different pairs of $p_1, p_2$ can be used on the right-hand side of \eqref{KP:CM:Hardy}; moreover, if $w\in A_\infty,$ then 
\begin{equation}\label{Kp:CM:Hardy2}
\norm{D^s(T_\sigma(f,g))}{H^p(w)} \lesssim \norm{D^s f}{H^{p}(w)} \norm{g}{L^\infty} +  \norm{f}{L^\infty}   \norm{D^s g}{H^{p}(w)} \quad \forall f, g \in \swz,
\end{equation}
where $0<p<\infty$ and $s>\tau_{p}(w).$

\end{corollary}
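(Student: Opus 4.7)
The plan is to derive Corollary~\ref{coro:KP:CM:Hardy} directly from Theorem~\ref{thm:CM:TL:B} via two structural identifications that are recalled in Section~\ref{sec:prelim}: the equivalence $H^p(w)\approx \tl{p}{0}{2}(w)$ valid for $w\in A_\infty$ and $0<p<\infty$, and the lifting property $\norm{D^s f}{\tlw{p}{0}{q}{w}}\approx \norm{f}{\tlw{p}{s}{q}{w}}$ valid for $w\in A_\infty$, $0<p,q\le\infty$ and any real $s$. These two facts will let me trade estimates at smoothness level $s$ in weighted Triebel--Lizorkin norms for estimates of $D^s$ of the same function in a weighted Hardy norm.

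First, I would invoke \eqref{KP:CM:TL} with $q=2$ and $m=0$. Since $\tau_{p,2}(w)=\tau_p(w)$, the hypothesis $s>\tau_p(w)$ in the corollary already supplies the smoothness condition $s>\tau_{p,q}(w)$ required by the theorem, so the estimate applies and yields, for every $f,g\in \swz$,
\[
\norm{T_\sigma(f,g)}{\tlw{p}{s}{2}{w}}\lesssim \norm{f}{\tlw{p_1}{s}{2}{w_1}}\,\norm{g}{H^{p_2}(w_2)}+\norm{f}{H^{p_1}(w_1)}\,\norm{g}{\tlw{p_2}{s}{2}{w_2}}.
\]
Next, applying the lifting property to each $\dot F^s_{\cdot,2}$-norm and the Hardy/Triebel--Lizorkin identification to each resulting $\dot F^0_{\cdot,2}$-norm converts the left-hand side into $\norm{D^s T_\sigma(f,g)}{H^p(w)}$ and the two factors $\norm{f}{\tlw{p_1}{s}{2}{w_1}}$, $\norm{g}{\tlw{p_2}{s}{2}{w_2}}$ into $\norm{D^s f}{H^{p_1}(w_1)}$ and $\norm{D^s g}{H^{p_2}(w_2)}$, respectively. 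This gives \eqref{KP:CM:Hardy} exactly.

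The diagonal estimate \eqref{Kp:CM:Hardy2} is obtained by repeating the argument verbatim with \eqref{KP:CM:TL2} in place of \eqref{KP:CM:TL}, again selecting $q=2$, invoking $\tau_{p,2}(w)=\tau_p(w)$, and applying lifting plus Hardy/Triebel--Lizorkin identification to the three $\dot F$-norms; the $L^\infty$ factors are unaffected. The claim about using different pairs $(p_1,p_2)$ on the right-hand side of \eqref{KP:CM:Hardy} when $w_1=w_2$ is inherited immediately from the same statement in Theorem~\ref{thm:CM:TL:B}.

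I do not foresee any substantive obstacle beyond confirming that both the Hardy/Triebel--Lizorkin identification and the lifting property are available in the full generality needed, i.e.\ for every $w\in A_\infty$, $0<p<\infty$, $0<q\le\infty$ and arbitrary $s\in\re$. These are precisely the two tools the corollary is advertised to rest on, and they are expected to be collected in Section~\ref{sec:spaces}, so the deduction is essentially bookkeeping once Theorem~\ref{thm:CM:TL:B} is in hand.
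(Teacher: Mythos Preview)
Your proposal is correct and follows exactly the route indicated in the paper: apply \eqref{KP:CM:TL} and \eqref{KP:CM:TL2} with $q=2$ and $m=0$, use $\tau_{p,2}(w)=\tau_p(w)$, and then invoke the lifting property together with the identification $H^p(w)\simeq \tlw{p}{0}{2}{w}$ from Section~\ref{sec:spaces}. There is nothing to add.
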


By choosing $\sigma \equiv 1$  so that $T_\sigma(f,g) = fg$, Theorem \ref{thm:CM:TL:B}  implies the following corollary, which, in particular, gives that $\tlw{p}{s}{q}{w}\cap L^\infty$ and $\besw{p}{s}{q}{w}\cap L^\infty$ are quasi-Banach algebras under pointwise multiplication for any $w\in A_\infty.$ 

\begin{corollary}\label{coro:KP:TL:B}  Consider  $0 < p, p_1, p_2  \le \infty$  such that $\hcline$ and  $0 < q \leq \infty;$ let  $w_1,w_2\in A_\infty$ and set $w=w_1^{{p}/{p_1}} w_2^{{p}/{p_2}}.$ 
If $0 < p ,p_1,p_2< \infty$ and  $s > \tau_{p,q}(w),$ it holds that
\begin{equation}\label{KP:TL}
\norm{fg}{\tlw{p}{s}{q}{w}} \lesssim \norm{f}{\tlw{p_1}{s}{q}{w_1}} \norm{g}{H^{p_2}(w_2)} +  \norm{f}{H^{p_1}(w_1)}   \norm{g}{\tlw{p_2}{s}{q}{w_2}} \quad \forall f, g \in \swz.
\end{equation}
If $0 < p, p_1,p_2 \le \infty$ and $s > \tau_p(w)$, it holds that
\begin{equation}\label{KP:B}
\norm{fg}{\besw{p}{s}{q}{w}} \lesssim \norm{f}{\besw{p_1}{s}{q}{w_1}} \norm{g}{H^{p_2}(w_2)} +  \norm{f}{H^{p_1}(w_1)}   \norm{g}{\besw{p_2}{s}{q}{w_2}} \quad \forall f, g \in \swz,
\end{equation}
where the Hardy spaces $H^{p_1}(w_1)$ and $H^{p_2}(w_2)$ must be replaced by $L^\infty$ if $p_1=\infty$ or $p_2=\infty,$ respectively.

If $w_1=w_2$ then different pairs of $p_1, p_2$ can be used on the right-hand sides of \eqref{KP:TL} and \eqref{KP:B}; moreover, if $w\in A_\infty,$ then 
\begin{equation}\label{KP:TL2}
\norm{fg}{\tlw{p}{s}{q}{w}} \lesssim \norm{f}{\tlw{p}{s}{q}{w}} \norm{g}{L^\infty} +  \norm{f}{L^\infty}   \norm{g}{\tlw{p}{s}{q}{w}} \quad \forall f, g \in \swz,
\end{equation}
where $0<p<\infty,$ $0<q\le \infty$ and $s>\tau_{p,q}(w).$
\end{corollary}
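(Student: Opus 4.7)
The plan is to deduce the corollary as a direct specialization of Theorem~\ref{thm:CM:TL:B} to the trivial bilinear multiplier. First I would verify that $\sigma\equiv 1$ is a Coifman--Meyer multiplier of order $m=0$: indeed, $|\partial_\xi^\alpha\partial_\eta^\beta \sigma|$ vanishes when $|\alpha|+|\beta|\ge 1$ and equals $1$ when $\alpha=\beta=0$, which trivially satisfies \eqref{eq:CMm} with $m=0$ (taking $C_{0,0}=1$, $C_{\alpha,\beta}=0$ otherwise). Next I would observe that the operator $T_\sigma$ associated with this symbol acts as pointwise multiplication: for $f,g\in\swz$,
\[
T_\sigma(f,g)(x)=\int_{\rtn}\fhat(\xi)\ghat(\eta)\eixxe\,d\xi\,d\eta=\left(\int_{\rn}\fhat(\xi)\eixxi d\xi\right)\left(\int_{\rn}\ghat(\eta)\eixeta d\eta\right)=f(x)g(x),
\]
by Fubini and Fourier inversion.

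With these two identifications in hand, the estimates \eqref{KP:TL}, \eqref{KP:B} and \eqref{KP:TL2} follow immediately by applying \eqref{KP:CM:TL}, \eqref{KP:CM:B} and \eqref{KP:CM:TL2}, respectively, from Theorem~\ref{thm:CM:TL:B} with $m=0$; the thresholds $s>\tau_{p,q}(w)$ for the Triebel--Lizorkin statements and $s>\tau_p(w)$ for the Besov statement transfer verbatim, as does the flexibility in the choice of different pairs $(p_1,p_2)$ when $w_1=w_2$, and the requirement to replace $H^{p_i}(w_i)$ by $L^\infty$ when $p_i=\infty$.

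The quasi-Banach algebra assertion then follows as a straightforward consequence: if $f,g\in \tlw{p}{s}{q}{w}\cap L^\infty$ with $w\in A_\infty$, $0<p<\infty$, $0<q\le\infty$ and $s>\tau_{p,q}(w)$, then the right-hand side of \eqref{KP:TL2} is finite, so $fg\in\tlw{p}{s}{q}{w}$; since $L^\infty$ is obviously closed under pointwise products, $fg\in\tlw{p}{s}{q}{w}\cap L^\infty$ and the quasi-norm $\|\cdot\|_{\tlw{p}{s}{q}{w}}+\|\cdot\|_{L^\infty}$ makes this intersection into a quasi-Banach algebra. The analogous conclusion for $\besw{p}{s}{q}{w}\cap L^\infty$ is obtained from \eqref{KP:B} by taking $w_1=w_2=w$ and choosing the pair $(p_1,p_2)=(p,\infty)$ (and symmetrically $(\infty,p)$) on the right-hand side, which is permitted by the flexibility clause for matching weights.

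Since every step reduces to invoking the already-proved Theorem~\ref{thm:CM:TL:B}, there is essentially no analytic obstacle; the only point requiring mild care is the symbolic verification that $\sigma\equiv 1$ satisfies \eqref{eq:CMm} with $m=0$ (rather than, say, some negative $m$, which would instead force a blow-up at the origin) and the bookkeeping of the parameter ranges and the $L^\infty$ substitutions when $p_1$ or $p_2$ equals $\infty$.
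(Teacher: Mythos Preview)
Your proposal is correct and matches the paper's approach exactly: the paper simply states that the corollary follows by choosing $\sigma\equiv 1$ so that $T_\sigma(f,g)=fg$ and invoking Theorem~\ref{thm:CM:TL:B}. Your additional verification that $\sigma\equiv 1$ satisfies \eqref{eq:CMm} with $m=0$ and your explicit Fourier-inversion computation are helpful elaborations, but the core argument is identical.
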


In particular, setting $q=2$ for \eqref{KP:TL} and \eqref{KP:TL2} (or setting $\sigma\equiv 1$ in Corollary~\ref{coro:KP:CM:Hardy}), we obtain:

\begin{corollary}\label{coro:KP:Hardy} 
Consider  $0 < p, p_1, p_2  < \infty$  such that $\hcline;$ let  $w_1,w_2\in A_\infty$ and set $w=w_1^{{p}/{p_1}} w_2^{{p}/{p_2}}.$ 
If  $s > \tau_{p}(w),$ it holds that
\begin{equation}\label{KP:Hardy}
\norm{D^s(fg)}{H^p(w)} \lesssim \norm{D^s f}{H^{p_1}(w_1)} \norm{g}{H^{p_2}(w_2)} +  \norm{f}{H^{p_1}(w_1)}   \norm{D^s g}{H^{p_2}(w_2)} \quad \forall f, g \in \swz.
\end{equation}
If $w_1=w_2$ then different pairs of $p_1, p_2$ can be used on the right-hand side of \eqref{KP:Hardy}; moreover, if $w\in A_\infty,$ then 
\begin{equation*}
\norm{D^s(fg)}{H^p(w)} \lesssim \norm{D^s f}{H^{p}(w)} \norm{g}{L^\infty} +  \norm{f}{L^\infty}   \norm{D^s g}{H^{p}(w)} \quad \forall f, g \in \swz,
\end{equation*}
where $0<p<\infty$ and $s>\tau_{p}(w).$
\end{corollary}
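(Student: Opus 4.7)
The plan is to derive this corollary directly from the previously established results rather than treat it as an independent statement; the paragraph preceding the corollary already signals this by indicating two parallel routes. The first route is to specialize Corollary~\ref{coro:KP:CM:Hardy} to $\sigma\equiv 1$, and the second is to specialize Corollary~\ref{coro:KP:TL:B} to $q=2$ and invoke the Triebel--Lizorkin characterization of weighted Hardy spaces from Section~\ref{sec:spaces}. I would present whichever is cleanest; the first is essentially immediate.

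For the first route, I would begin by verifying that the constant symbol $\sigma(\xi,\eta)\equiv 1$ is a Coifman--Meyer multiplier of order $m=0$ in the sense of \eqref{eq:CMm}. For $(\alpha,\beta)=(0,0)$ the required bound reads $|\sigma|\le C_{0,0}$, which holds with $C_{0,0}=1$; for $|\alpha|+|\beta|\ge 1$ the derivative vanishes identically, so the bound is trivially satisfied with $C_{\alpha,\beta}=0$. Since $T_\sigma(f,g)=fg$ for this symbol, the estimates \eqref{KP:CM:Hardy} and \eqref{Kp:CM:Hardy2} immediately specialize to the two inequalities claimed in the corollary, and the hypothesis $s>\tau_p(w)$ is identical in both statements.

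For the second route, I would fix $q=2$ in \eqref{KP:TL} and \eqref{KP:TL2}. Using the weighted identification
\[
\|h\|_{\dot F^{s}_{p,2}(w)} \approx \|D^{s} h\|_{H^{p}(w)}, \qquad w\in A_\infty,\ 0<p<\infty,\ s\in\re,
\]
stated in Section~\ref{sec:spaces}, the left-hand side becomes $\|D^s(fg)\|_{H^p(w)}$ and each factor $\|\cdot\|_{\dot F^{s}_{p_j,2}(w_j)}$ on the right becomes $\|D^s\cdot\|_{H^{p_j}(w_j)}$. The threshold matches because $\tau_{p,2}(w)=\tau_p(w)$, which is noted explicitly in the paragraph defining $\tau_{p,q}(w)$. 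The $L^\infty$ endpoint in the case $w_1=w_2$, $w\in A_\infty$ follows in the same way from \eqref{KP:TL2}.

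I do not anticipate a genuine obstacle: the corollary is purely a bookkeeping consequence of results already in hand. The only items to double-check are that the permitted ranges $0<p,p_1,p_2<\infty$, the condition $s>\tau_p(w)$, and the weight assumptions $w_1,w_2\in A_\infty$ (with $w\in A_\infty$ in the second part) line up exactly with the hypotheses of Corollary~\ref{coro:KP:CM:Hardy} (respectively Corollary~\ref{coro:KP:TL:B} with $q=2$), and that the test-function class $\swz$ is the same throughout; both are immediate from the statements.
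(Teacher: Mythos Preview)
Your proposal is correct and matches the paper's approach exactly: the paper derives this corollary by setting $q=2$ in \eqref{KP:TL} and \eqref{KP:TL2} (equivalently, by setting $\sigma\equiv 1$ in Corollary~\ref{coro:KP:CM:Hardy}), just as you describe. The verifications you outline (that $\sigma\equiv 1$ satisfies \eqref{eq:CMm} with $m=0$, that $\tau_{p,2}(w)=\tau_p(w)$, and that the parameter ranges coincide) are precisely the routine checks needed.
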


\begin{remark}\label{re:BN}
The estimates in Corollary~\ref{coro:KP:CM:Hardy} are related to some of those  in \cite[Theorem 1.1]{BrNa2017}, where it was proved, using different methods,  that if $\sigma$ is a Coifman-Meyer multiplier of order 0, $1<p_1,p_2\le \infty,$ $\frac{1}{2}<p<\infty,$ $\hcline,$  $w_1\in A_{p_1},$ $w_2\in A_{p_2},$ $w=w_1^{{p}/{p_1}} w_2^{{p}/{p_2}}$ and $s>\tau_p,$ then for all $f,g\in \sw$ it holds that
\begin{equation}\label{KP:CM:Lebesgue}
\norm{D^s(T_\sigma(f,g))}{L^p(w)} \lesssim \norm{D^s f}{L^{p_1}(w_1)} \norm{g}{L^{p_2}(w_2)} +  \norm{f}{L^{p_1}(w_1)}   \norm{D^s g}{L^{p_2}(w_2)}. 
\end{equation}
Moreover, if $w_1=w_2$ then different pairs of $p_1, p_2$ can be used on the right-hand side of \eqref{KP:CM:Lebesgue}

Corollary~\ref{coro:KP:CM:Hardy} and  \cite[Theorem 1.1]{BrNa2017} have some overlap but each of them gives a different set  of estimates:

\begin{enumerate}[$\bullet$]
\item The estimate \eqref{KP:CM:Hardy} allows for  $0<p,p_1,p_2<\infty,$ for any $w_1,w_2\in A_\infty$ and for the norm in $H^p(w)$ on its left-hand side as long as $s>\tau_{p}(w).$ On the other hand,  \eqref{KP:CM:Lebesgue} requires $1<p_1,p_2\le \infty,$  $w_1\in A_{p_1},$ $w_2\in A_{p_2},$  the norm in $L^p(w)$ on its left-hand side  and $s>\tau_p.$ Therefore, recalling that $\tau_p\le\tau_{p}(w),$ when compared to \eqref{KP:CM:Lebesgue}, the estimate \eqref{KP:CM:Hardy} is less restrictive regarding the ranges for $p,p_1,p_2$ and the classes of weights, but more restrictive in terms of the range for the regularity $s.$

\item In particular, \eqref{KP:CM:Hardy}  implies \eqref{KP:CM:Lebesgue} for $s>\tau_{p}(w),$   $1/2<p<\infty,$ $1<p_1,p_2<\infty$ such that $\hcline,$  $w_1\in A_{p_1}$ and $w_2\in A_{p_2}.$   However, if  $\tau_p<\tau_{p}(w),$ then \eqref{KP:CM:Hardy} does not give \eqref{KP:CM:Lebesgue} for $\tau_p<s\le  \tau_{p}(w),$ while  \eqref{KP:CM:Lebesgue} holds for $s>\tau_p.$ The following are examples of  weights $w_1$ and $w_2$ for which the corresponding weight $w$ satisfies $\tau_p<\tau_{p}(w)$:  Let  $1<p_1\le p_2<\infty$ and $w_1(x)=w_2(x)=w(x)=\abs{x}^a$ with $n(r-1)<a<n(p_1-1)$ for some $1<r<p_1.$  Then    $\abs{x}^a\in A_{p_1}\subset A_{p_2}$ and $\abs{x}^a\notin A_r;$  the latter  gives $\tau_w>1,$ which implies  $\tau_p<\tau_{p}(w)$ if $p<\tau_{w}.$

\item For $1<p<\infty,$ $w\in A_p$ and $s>\tau_p,$ \eqref{KP:CM:Lebesgue} gives  the estimate \eqref{Kp:CM:Hardy2}   as well as the endpoint estimate
 \begin{equation*}
\norm{D^s(T_\sigma(f,g))}{L^p(w)} \lesssim \norm{D^s f}{L^\infty} \norm{g}{L^{p}(w)} +  \norm{f}{L^{p}(w)}   \norm{D^s g}{L^\infty}.
\end{equation*}
 On the other hand, \eqref{Kp:CM:Hardy2} allows  for  $0<p<\infty$ and  $w\in A_\infty$  as long as $s>\tau_{p}(w).$ 
\end{enumerate}

\end{remark}

\begin{remark}\label{re:improvement}
Notice that when $w_1=w_2\equiv 1,$ the inequality \eqref{KP:Hardy} extends and improves \eqref{KPH:Lp} by allowing $0<p,p_1,p_2<\infty.$  In particular, if $1<p_1,p_2<\infty,$  \eqref{KP:Hardy} gives \eqref{KPH:Lp} with the larger quantity $\norm{D^s(fg)}{H^p}$ on the left-hand side. Moreover, Corollary~\ref{coro:KP:Hardy} complements some of the estimates obtained through different methods in \cite[Theorem 1.1]{MR3513582} in the same manner  Corollary~\ref{coro:KP:CM:Hardy} complements  \cite[Theorem 1.1]{BrNa2017} as explained in Remark~\ref{re:BN}; as in that case, 
 Corollary~\ref{coro:KP:Hardy} and \cite[Theorem 1.1]{MR3513582} have some estimates in common but each of them gives a different set of results.
 \end{remark}

\subsection{Weighted Leibniz-type rules for inhomogeneous Coifman--Meyer multiplier operators}\label{sec:inhCM} In this section we consider bilinear multiplier operators $T_\sigma$ where $\sigma=\sigma(\xi,\eta)$ satisfies the estimates \eqref{eq:CMm} with $\abs{\xi}+\abs{\eta}$ replaced with $1+\abs{\xi}+\abs{\eta};$ such multipliers are better suited for the setting of inhomogeneous spaces and we will refer to them as inhomogeneous Coifman--Meyer multipliers. As it will  become apparent from the proofs, an approach akin to the one used in the homogeneous setting leads to results for inhomogeneous Coifman--Meyer multiplier operators, in the spirit of those stated in Section~\ref{sec:kpcm}, in the context of weighted inhomogeneous Triebel--Lizorkin spaces, weighted inhomogeneous Besov spaces and weighted local Hardy spaces. Specifically, we have:

\begin{theorem}\label{thm:ICM:TL:B}  For $m \in \re,$ let $\sigma(\xi,\eta),$ $\xi,\eta\in\rn,$ be an inhomogeneous Coifman-Meyer multiplier of order $m.$ Consider  $0 < p, p_1, p_2  \le \infty$  such that $\hcline$ and  $0 < q \leq \infty;$ let  $w_1,w_2\in A_\infty$ and set $w=w_1^{{p}/{p_1}} w_2^{{p}/{p_2}}.$ 
If $0 < p,p_1,p_2 < \infty$ and  $s > \tau_{p,q}(w),$  it holds that
\begin{equation}\label{KP:ICM:TL}
\norm{T_\sigma(f,g)}{\itlw{p}{s}{q}{w}} \lesssim \norm{f}{\itlw{p_1}{s+m}{q}{w_1}} \norm{g}{h^{p_2}(w_2)} +  \norm{f}{h^{p_1}(w_1)}   \norm{g}{\itlw{p_2}{s+m}{q}{w_2}} \quad \forall f, g \in \sw.
\end{equation}
If $0< p, p_1,p_2\leq \infty$ and $s > \tau_p(w)$, it holds that
\begin{equation}\label{KP:ICM:B}
\norm{T_\sigma(f,g)}{\ibesw{p}{s}{q}{w}} \lesssim \norm{f}{\ibesw{p_1}{s+m}{q}{w_1} } \norm{g}{h^{p_2}(w_2)} +  \norm{f}{h^{p_1}(w_1)}   \norm{g}{\ibesw{p_2}{s+m}{q}{w_2} } \quad \forall f, g \in \sw,
\end{equation}
where the local Hardy spaces $h^{p_1}(w_1)$ and $h^{p_2}(w_2)$ must be replaced by $L^\infty$ if $p_1=\infty$ or $p_2=\infty,$ respectively.

If $w_1=w_2$ then different pairs of $p_1, p_2$ can be used on the right-hand sides of \eqref{KP:ICM:TL} and \eqref{KP:ICM:B}; moreover, if $w\in A_\infty,$ then 
\begin{equation*}
\norm{T_\sigma(f,g)}{\itlw{p}{s}{q}{w}} \lesssim \norm{f}{\itlw{p}{s+m}{q}{w} } \norm{g}{L^\infty} +  \norm{f}{L^\infty}   \norm{g}{\itlw{p}{s+m}{q}{w}} \quad \forall f, g \in \sw,
\end{equation*}
where $0<p<\infty,$ $0<q\le\infty$ and $s>\tau_{p,q}(w).$
\end{theorem}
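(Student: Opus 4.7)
The plan is to mirror the approach for the homogeneous Theorem~\ref{thm:CM:TL:B} but substitute the Nikol'skij representations for weighted \emph{inhomogeneous} Triebel--Lizorkin and Besov spaces (see Appendix~\ref{sec:appendix}) in place of the homogeneous ones. Fix an inhomogeneous Littlewood--Paley resolution $\{\varphi_j\}_{j\ge 0}$ with $\varphi_0$ supported in $\{|\xi|\le 2\}$ and $\varphi_j$ supported in the dyadic annulus $\{|\xi|\sim 2^j\}$ for $j\ge 1$; set $\Delta_j=\varphi_j(D)$ and $S_j=\sum_{k\le j}\Delta_k$. Perform a paraproduct decomposition $T_\sigma(f,g)=\Pi_1+\Pi_2+\Pi_3$ where
\[
\Pi_1 = \sum_{j\ge 0} T_\sigma(\Delta_j f,\, S_{j-3}g),\quad \Pi_2 = \sum_{k\ge 0} T_\sigma(S_{k-3}f,\, \Delta_k g),\quad \Pi_3 = \sum_{|j-k|\le 2} T_\sigma(\Delta_j f,\, \Delta_k g),
\]
under the convention $S_j\equiv 0$ for $j<0$.

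For $\Pi_1$, each summand has Fourier transform supported in $\{|\zeta|\lesssim 2^j\}$, and in fact in $\{|\zeta|\sim 2^j\}$ once $j\gtrsim 1$. The inhomogeneous Nikol'skij representation then gives
\[
\|\Pi_1(f,g)\|_{\itlw{p}{s}{q}{w}} \lesssim \Big\|\{2^{js} T_\sigma(\Delta_j f, S_{j-3}g)\}_{j\ge 0}\Big\|_{L^p(w;\ell^q)}
\]
whenever $s>\tau_{p,q}(w)$, with the order of the outer norms reversed for the Besov analogue under $s>\tau_p(w)$. The inhomogeneous CM estimates on $\sigma$ restricted to the relevant frequency region (where $1+|\xi|+|\eta|\sim 1+2^j$) produce, via standard kernel analysis, the pointwise bound
\[
|T_\sigma(\Delta_j f, S_{j-3}g)(x)| \lesssim (1+2^j)^{m}\, \M(\Delta_j f)(x)\cdot \M^* g(x),
\]
where $\M$ is the Hardy--Littlewood maximal function and $\M^* g$ is a grand/tangential maximal function dominating $\sup_k |S_k g|$. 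A symmetric bound handles $\Pi_2$, and $\Pi_3$ is controlled via the variant of the Nikol'skij representation for sequences whose Fourier transforms sit in balls of radius $\lesssim 2^j$, together with the analogous pointwise bound.

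Combining these pointwise estimates with H\"older's inequality applied to the factorization $w=w_1^{p/p_1}w_2^{p/p_2}$ and the vector-valued Fefferman--Stein inequality on weighted mixed-norm Lebesgue spaces (whose applicability is exactly what forces $s>\tau_{p,q}(w)$ in the Triebel--Lizorkin case and $s>\tau_p(w)$ in the Besov case), the desired bound reduces to
\[
\Big\|\{2^{j(s+m)}\Delta_j f\}_j\Big\|_{L^{p_1}(w_1;\ell^q)} \cdot \|\M^* g\|_{L^{p_2}(w_2)},
\]
which is dominated by $\|f\|_{\itlw{p_1}{s+m}{q}{w_1}}\, \|g\|_{h^{p_2}(w_2)}$ by the Littlewood--Paley characterization of inhomogeneous weighted Triebel--Lizorkin spaces and the maximal function characterization of weighted local Hardy spaces. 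The endpoint cases $p_i=\infty$ (replacing $h^{p_i}(w_i)$ by $L^\infty$), the freedom to choose different pairs $(p_1,p_2)$ when $w_1=w_2$, and the final scalar display in the statement are obtained by the obvious reshuffling of the H\"older split and the trivial estimate $\sup_k \|S_k g\|_{L^\infty}\le \|g\|_{L^\infty}$.

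The principal obstacle will be the very-low-frequency summands (small $j$, especially $j=0$), where the Fourier support of the paraproduct piece is merely a fixed ball rather than a genuine dyadic annulus; consequently the inhomogeneous Nikol'skij representation and the kernel estimates for $\sigma$ must be carried through uniformly so that the factor $(1+2^j)^m$ remains integrable into the overall bound. A secondary technical subtlety is the comparison of $\sup_k |S_k g|$ with the grand maximal function that defines $h^{p_2}(w_2)$; this is standard in the weighted local-Hardy theory but needs to be invoked carefully across the full range $0<p_2<\infty$ and for arbitrary $w_2\in A_\infty$.
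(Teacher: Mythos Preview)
Your outline takes a genuinely different route from the paper. The paper never applies $T_\sigma$ to frequency-localized pieces of $f$ and $g$; instead (Section~\ref{sec:decomp}) it expands the multiplier $\sigma$ itself into a Fourier series on dyadic shells, obtaining $T_\sigma=T_\sigma^1+T_\sigma^2$ with
\[
T_\sigma^1(f,g)=\sum_{a,b\in\ent^n}\frac{1}{(1+|a|^2+|b|^2)^N}\sum_{j\ge 0}\C_j(a,b)\,(\Do{\tau_a\Psi}{j}f)\,(\So{\tau_b\Phi}{j}g),
\]
so that each inner summand is a \emph{pointwise product}, not a bilinear operator. Lemma~\ref{lem:pointineq} then controls the shifted Littlewood--Paley pieces by $(1+|a|)^{\varepsilon+n/r_1}\M_{r_1}(\Do{\psi}{j}f)$ and $(1+|b|)^{\varepsilon+n/r_2}\M_{r_2}(\So{\varphi}{j}g)$, after which Nikol'skij, H\"older, and Fefferman--Stein for $\M_r$ finish the job exactly as in the proof of Theorem~\ref{thm:CM:TL:B}.

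There is a real gap in your version, located at the pointwise bound. Kernel analysis of $T_\sigma(\Delta_j f,S_{j-3}g)$, after bounding $|K_j(u,v)|\lesssim 2^{2nj}(1+2^j|u|)^{-N}(1+2^j|v|)^{-N}$ and factoring the double integral, produces $(1+2^j)^m\,\M(\Delta_j f)(x)\cdot\M(S_{j-3}g)(x)$ with the ordinary Hardy--Littlewood maximal function, not $\M^* g$. This is enough only when $p_1>\tau_{w_1}$, $p_2>\tau_{w_2}$ and $q>1$, since those are precisely the thresholds for boundedness of $\M$ on $L^{p_i}(w_i)$ and for the vector-valued Fefferman--Stein inequality with $\M$; the theorem, however, claims the full range $0<p_1,p_2<\infty$, $0<q\le\infty$, $w_i\in A_\infty$. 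To recover it you must replace $\M$ by $\M_r$ with $0<r<\min(1,p_i/\tau_{w_i},q)$, and that step requires an additional ingredient you have not invoked: the band-limitedness of $\Delta_j f$ and $S_{j-3}g$ together with a Peetre-type inequality (Lemma~\ref{coro:2:11}, which underlies Lemma~\ref{lem:pointineq}). Note also that the constraint $s>\tau_{p,q}(w)$ enters through the Nikol'skij representation (Theorem~\ref{thm:Nikolskij:weighted}), not through Fefferman--Stein as you write; it does not compensate for using $\M$ instead of $\M_r$. The paper's Fourier-series decomposition sidesteps the whole issue because each building block is already a product, so Lemma~\ref{lem:pointineq} applies directly and delivers $\M_r$ from the outset.
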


 Corollaries of Theorem~\ref{thm:ICM:TL:B} analogous to those in Section~\ref{sec:kpcm} follow with the operator $D^s$ replaced by the operator $J^s.$   For instance, we have:
 \begin{corollary}\label{coro:KP:CM:Hardyloc}  Let $\sigma(\xi,\eta),$ $\xi,\eta\in\rn,$ be an inhomogeneous Coifman-Meyer multiplier of order $0.$ 
Consider  $0 < p, p_1, p_2  < \infty$  such that $\hcline;$ let  $w_1,w_2\in A_\infty$ and set $w=w_1^{{p}/{p_1}} w_2^{{p}/{p_2}}.$ 
If  $s > \tau_p(w),$ it holds that
\begin{equation*}
\norm{J^s(T_\sigma(f,g))}{h^p(w)} \lesssim \norm{J^s f}{h^{p_1}(w_1)} \norm{g}{h^{p_2}(w_2)} +  \norm{f}{h^{p_1}(w_1)}   \norm{J^s g}{h^{p_2}(w_2)} \quad \forall f, g \in \sw.
\end{equation*}
If $w_1=w_2$ then different pairs of $p_1, p_2$ can be used on the right-hand side of \eqref{KP:CM:Hardy}; moreover, if $w\in A_\infty,$ then 
\begin{equation*}
\norm{J^s(T_\sigma(f,g))}{h^p(w)} \lesssim \norm{J^s f}{h^{p}(w)} \norm{g}{L^\infty} +  \norm{f}{L^\infty}   \norm{J^s g}{h^{p}(w)} \quad \forall f, g \in \sw,
\end{equation*}
where $0<p<\infty$ and $s>\tau_{p}(w).$
\end{corollary}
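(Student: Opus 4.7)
The plan is to deduce Corollary~\ref{coro:KP:CM:Hardyloc} directly from Theorem~\ref{thm:ICM:TL:B} by specializing its parameters and translating the Triebel--Lizorkin norms into local Hardy norms via the Bessel-potential lifting.

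First, I would invoke the identification $h^p(w)=\itl{p}{0}{2}(w)$ for $0<p<\infty$ and $w\in A_\infty$, together with the lifting isomorphism furnished by the Bessel potential $J^s$, namely
\begin{equation*}
\|J^s u\|_{h^p(w)} \approx \|u\|_{\itl{p}{s}{2}(w)}, \qquad 0<p<\infty,\ s\in\re,\ w\in A_\infty.
\end{equation*}
These are precisely the properties recalled in Section~\ref{sec:spaces}, and together they convert the target inequality into a statement about $\|T_\sigma(f,g)\|_{\itl{p}{s}{2}(w)}$ controlled by norms in $\itl{p_i}{s}{2}(w_i)$ and $h^{p_i}(w_i)$.

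Next, I would apply Theorem~\ref{thm:ICM:TL:B} with the choices $m=0$ and $q=2$. Since $\tau_{p,2}(w)=\tau_p(w)$, the required regularity threshold $s>\tau_{p,q}(w)$ in the theorem becomes exactly $s>\tau_p(w)$, matching the hypothesis of the corollary. Estimate \eqref{KP:ICM:TL} then yields
\begin{equation*}
\|T_\sigma(f,g)\|_{\itl{p}{s}{2}(w)} \lesssim \|f\|_{\itl{p_1}{s}{2}(w_1)}\|g\|_{h^{p_2}(w_2)} + \|f\|_{h^{p_1}(w_1)}\|g\|_{\itl{p_2}{s}{2}(w_2)}
\end{equation*}
for all $f,g\in\sw$. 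Rewriting each $\itl{p_i}{s}{2}(w_i)$-norm as $\|J^s\cdot\|_{h^{p_i}(w_i)}$ on the right, and the left-hand side as $\|J^s T_\sigma(f,g)\|_{h^p(w)}$, delivers the first displayed inequality of the corollary.

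For the second statement ($w_1=w_2=w\in A_\infty$, endpoint with $L^\infty$), I would repeat the same lifting argument applied to the corresponding displayed estimate of Theorem~\ref{thm:ICM:TL:B} with $q=2$ and $m=0$, which produces the $L^\infty$-version verbatim. The assertion about allowing different pairs $(p_1,p_2)$ when $w_1=w_2$ is inherited without change from Theorem~\ref{thm:ICM:TL:B}. I do not anticipate a genuine obstacle here: all of the content resides in Theorem~\ref{thm:ICM:TL:B}, and the only delicate point is matching the exponent conventions, i.e.\ verifying $\tau_{p,2}(w)=\tau_p(w)$ and that $h^p(w)=\itl{p}{0}{2}(w)$ with the lifting $J^s:\itl{p}{s}{2}(w)\to h^p(w)$ respects the weighted $A_\infty$ setting, both of which are recorded in Section~\ref{sec:spaces}.
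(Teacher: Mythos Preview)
Your proposal is correct and follows exactly the approach the paper indicates: the corollary is derived from Theorem~\ref{thm:ICM:TL:B} with $m=0$ and $q=2$ by using the identification $h^p(w)\simeq\itlw{p}{0}{2}{w}$ and the lifting property $\norm{J^s u}{h^p(w)}\simeq\norm{u}{\itlw{p}{s}{2}{w}}$, together with $\tau_{p,2}(w)=\tau_p(w)$.
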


Corollary~\ref{coro:KP:CM:Hardyloc} complements some of the estimates obtained in \cite[Theorem 1.1]{BrNa2017} for $J^s$ in an analogous way to that described in Remark~\ref{re:BN}. Moreover, Corollary~\ref{coro:KP:CM:Hardyloc} applied to the case $\sigma\equiv 1$ gives in particular 
\begin{equation}\label{eq:kplochardy}
\norm{J^s(fg)}{h^p(w)} \lesssim \norm{J^s f}{h^{p_1}(w_1)} \norm{g}{h^{p_2}(w_2)} +  \norm{f}{h^{p_1}(w_1)}   \norm{J^s g}{h^{p_2}(w_2)} \quad \forall f, g \in \sw,
\end{equation}
which  supplements  some of the estimates obtained in \cite[Theorem 1.1]{MR3513582} for $J^s$ in a similar manner to that indicated in Remark~\ref{re:improvement}.  The case $w_1=w_2\equiv 1$ of \eqref{eq:kplochardy} was obtained in \cite{MR3750316} and is an extension and an improvement of \eqref{KPI:Lp}; indeed,  \eqref{eq:kplochardy} allows for $0<p,p_1,p_2<\infty$ and, when $1<p_1,p_2<\infty,$ it improves \eqref{KPI:Lp} by allowing the larger quantity $\norm{J^s(fg)}{h^p}$ on the left-hand side.

We note that the counterpart of Corollary~\ref{coro:KP:TL:B} gives in particular that $\itlw{p}{s}{q}{w}\cap L^\infty$ and $\ibesw{p}{s}{q}{w}\cap L^\infty$ are quasi-Banach algebras under pointwise multiplication for any $w\in A_\infty.$

\subsection{Applications to scattering of solutions to systems of PDEs}\label{sec:scattering} Our applications will be concerned with systems of differential equations on functions $u=u(t,x),$ $v=v(t,x)$ and $w=w(t,x),$ with $t\ge 0$ and $x\in\re^n,$ of the form
\begin{equation} \label{eq:a:b:}
\left\{ \begin{array}{lll}  \partial_t u =vw, & \partial_t v +a(D) v = 0, & \partial_t w + b(D) w = 0, \\
  u(0,x)=0,&v(0,x)=f(x),&w(0,x) = g(x),
 \end{array} \right.
\end{equation}
 where  $a(D)$ and $b(D)$ are (linear) Fourier multipliers with symbols  $a(\xi)$ and  $b(\xi),$ $\xi\in\re^n,$ respectively; that is, $\widehat{a(D)f}(\xi)=a(\xi)\widehat{f}(\xi)$ and $\widehat{b(D)f}(\xi)=b(\xi)\widehat{f}(\xi)$. As in B\'enyi et al.~\cite[Section 2.3]{MR3205530}, we formally have 
$$ 
v(t,x)= \int_{\re^n} e^{-t a(\xi)} \widehat{f}(\xi)\, \eixxi \dxi,\quad w(t,x)= \int_{\re^n}  e^{-t b(\eta)} \widehat{g}(\eta)\,\eixeta \deta,
$$
and
\begin{align*}
u(t,x) & = \int_0^t v(s,x) w(s,x) \,ds  = \int_{\rtn} \left(\int_0^t e^{-s (a(\xi)+b(\eta))} \,ds \right) \widehat{f}(\xi) \widehat{g}(\eta) \, \eixxe \dxi\deta.
\end{align*}
Setting $\lambda(\xi,\eta)=a(\xi)+b(\eta)$ and assuming that $\lambda$ never vanishes, the solution $u(t,x)$ can then be written as the action on $f$ and $g$ of the bilinear multiplier  with symbol $\frac{1-e^{-t\lambda(\xi,\eta)}}{\lambda(\xi,\eta)},$ that is,  
\begin{equation}\label{u:T:lambda}
u(t,x) = T_{\frac{1-e^{-t\lambda}}{\lambda}}(f,g)(x).
\end{equation}
Following Bernicot--Germain~\cite[Section 9.4]{MR2680189}, suppose there exists   $u_\infty\in \swp$ such that 
\begin{equation}\label{def:u:infty}
\lim\limits_{t \to \infty} u(t, \cdot ) = u_\infty \quad \text{in } \swp;
\end{equation}
then, given a function space $X$, we say that the solution $u$ of  \eqref{eq:a:b:} scatters in the function space $X$ if $u_\infty \in X.$

As an application of Theorems \ref{thm:CM:TL:B} and \ref{thm:ICM:TL:B} we obtain the following scattering properties for solutions to  systems of the type \eqref{eq:a:b:} involving powers of the Laplacian.

For $0<p_1,p_2, p, q\le \infty$ and $w_1,w_2\in A_\infty,$ set
\begin{align*}
\gamt&=2( [n(1/\min(p,q,1)+1/\min(1,p_1/\tau_{w_1},p_2/\tau_{w_2},q))]+1),\\
 \gamb&=2( [n(1/\min(p,q,1)+1/\min(1,p_1/\tau_{w_1},p_2/\tau_{w_2}))]+1).
\end{align*}

For $\delta>0$ define
$$
\Ss_{\delta}=\{(\xi,\eta)\in \re^{2n}: \abs{\eta}\le \delta^{-1}\abs{\xi}\text{ and }\abs{\xi}\le \delta^{-1}\abs{\eta}\}.
$$
\begin{theorem}\label{thm:scattering} Consider  $0 < p, p_1, p_2  \le \infty$  such that $\hcline$ and  $0 < q \leq \infty;$ let  $w_1,w_2\in A_\infty$ and set $w=w_1^{{p}/{p_1}} w_2^{{p}/{p_2}}.$ 
Fix $\gamma>0;$ if $\gamma$ is even, or $\gamma\ge \gamt$ in the setting of Triebel--Lizorkin spaces, or $\gamma\ge \gamb$ in the setting of Besov spaces, assume $f, g \in \swz;$ otherwise, assume that $f,g\in\swz$ are such that $\fhat(\xi)\ghat(\eta)$ is supported in $\Ss_{\delta}$ for some $0<\delta\ll1.$ Consider the system 
\begin{equation}\label{eq:Ds:Ds}
\left\{ \begin{array}{lll}  \partial_t u =vw, & \partial_t v +D^\gamma v = 0, & \partial_t w + D^\gamma w = 0, \\
  u(0,x)=0,&v(0,x)=f(x),&w(0,x) = g(x).
 \end{array} \right.
\end{equation}
If $0 < p,p_1,p_2 < \infty$ and  $s > \tau_{p,q}(w),$ the solution $u$ of \eqref{eq:Ds:Ds}  scatters in $\tlw{p}{s}{q}{w}$ to a function $u_\infty$ that satisfies the following estimates: 
\begin{equation}\label{eq:scattering1}
\norm{u_\infty}{\tlw{p}{s}{q}{w}} \lesssim \norm{f}{\tlw{p_1}{s-\gamma}{q}{w_1} } \norm{g}{H^{p_2}(w_2)} +  \norm{f}{H^{p_1}(w_1)}   \norm{g}{\tlw{p_2}{s-\gamma}{q}{w_2} },
\end{equation}
where the implicit constant is independent of $f$ and $g.$
If $0< p, p_1,p_2\leq \infty$ and $s > \tau_p(w)$, the solution $u$ of \eqref{eq:Ds:Ds}  scatters in $\besw{p}{s}{q}{w}$ to a function $u_\infty$ that satisfies the following estimates
\begin{equation}\label{eq:scattering2}
\norm{u_\infty}{\besw{p}{s}{q}{w}} \lesssim \norm{f}{\besw{p_1}{s-\gamma}{q}{w_1} } \norm{g}{H^{p_2}(w_2)} +  \norm{f}{H^{p_1}(w_1)}   \norm{g}{\besw{p_2}{s-\gamma}{q}{w_2} },
\end{equation}
where the Hardy spaces $H^{p_1}(w_1)$ and $H^{p_2}(w_2)$ must be replaced by $L^\infty$ if $p_1=\infty$ or $p_2=\infty,$ respectively, and the implicit constant is independent of $f$ and $g.$ 
If $w_1=w_2$ then different pairs of $p_1, p_2$ can be used on the right-hand sides of \eqref{eq:scattering1} and \eqref{eq:scattering2}; moreover, if $w\in A_\infty,$ then 
\begin{equation*}
\norm{u_\infty}{\tlw{p}{s}{q}{w}} \lesssim \norm{f}{\tlw{p}{s-\gamma}{q}{w} } \norm{g}{L^\infty} +  \norm{f}{L^\infty}   \norm{g}{\tlw{p}{s-\gamma}{q}{w}},
\end{equation*}
where $0<p<\infty,$ $0<q\le\infty,$  $s>\tau_{p,q}(w),$ and the implicit constant is independent of $f$ and $g.$ 
\end{theorem}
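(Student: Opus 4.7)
The plan is to identify the scattering profile $u_\infty$ explicitly as a bilinear multiplier applied to $(f,g)$, and then extract the estimates \eqref{eq:scattering1}--\eqref{eq:scattering2} as an application of Theorem~\ref{thm:CM:TL:B} with $m=-\gamma$. Setting $\lambda(\xi,\eta)=|\xi|^\gamma+|\eta|^\gamma$, formula \eqref{u:T:lambda} rewrites the solution as $u(t,\cdot)=T_{\sigma_t}(f,g)$ with $\sigma_t=(1-e^{-t\lambda})/\lambda$, and I would define $u_\infty:=T_{1/\lambda}(f,g)$. Since $f,g\in\swz$, both $\fhat$ and $\ghat$ vanish to infinite order at the origin, so $\fhat(\xi)\ghat(\eta)/\lambda(\xi,\eta)$ is integrable against any $\varphi\in\sw$; since $\sigma_t\to 1/\lambda$ pointwise on $\rtn\setminus\{(0,0)\}$ with $|\sigma_t|\le 1/\lambda$, dominated convergence will yield $u(t,\cdot)\to u_\infty$ in $\swp$, verifying the scattering in the sense of \eqref{def:u:infty}.

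To obtain the estimates, the match $s+m=s-\gamma$ shows it is enough to verify that $\sigma=1/\lambda$ satisfies the Coifman--Meyer condition \eqref{eq:CMm} of order $-\gamma$ to the order of smoothness required by Theorem~\ref{thm:CM:TL:B}. The underlying bounds are $\lambda(\xi,\eta)\sim(|\xi|+|\eta|)^\gamma$ on $\rtn\setminus\{(0,0)\}$ together with $|\partial_\xi^\alpha(|\xi|^\gamma)|\lesssim|\xi|^{\gamma-|\alpha|}$ for $\xi\ne 0$ and its analogue in $\eta$; a Fa\`a di Bruno expansion of $\partial_\xi^\alpha\partial_\eta^\beta(1/\lambda)$ then delivers the Coifman--Meyer bound $(|\xi|+|\eta|)^{-\gamma-|\alpha|-|\beta|}$ wherever the derivative exists.

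I would then split the analysis according to the three scenarios in the hypothesis. When $\gamma$ is a positive even integer, $|\xi|^\gamma$ is a polynomial, so $\lambda\in C^\infty(\rtn\setminus\{(0,0)\})$ and $1/\lambda$ is a genuine Coifman--Meyer multiplier of order $-\gamma$, to which Theorem~\ref{thm:CM:TL:B} applies verbatim. When $\gamma\ge\gamt$ (respectively $\gamma\ge\gamb$), the derivatives of $1/\lambda$ up to the order that enters the proof of \eqref{KP:CM:TL} (respectively \eqref{KP:CM:B}) remain continuous on $\rtn\setminus\{(0,0)\}$ and obey the Coifman--Meyer estimates, so the same conclusion follows from a finite-smoothness reading of Theorem~\ref{thm:CM:TL:B}. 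In the remaining case, the hypothesis that $\fhat(\xi)\ghat(\eta)$ is supported in $\Ss_\delta$ lets me insert a smooth $0$-homogeneous cutoff $\chi$ equal to $1$ on $\Ss_\delta$ and supported in the slightly larger $\Ss_{\delta/2}$; on the support of $\chi$ both $|\xi|$ and $|\eta|$ are comparable to $|\xi|+|\eta|$, so $\chi/\lambda$ is a bona fide Coifman--Meyer multiplier of order $-\gamma$, while $T_{1/\lambda}(f,g)=T_{\chi/\lambda}(f,g)$ because of the support of $\fhat\ghat$.

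In every case, Theorem~\ref{thm:CM:TL:B} (or its Besov analogue) applied with $m=-\gamma$ produces \eqref{eq:scattering1} and \eqref{eq:scattering2}, and the $w_1=w_2$ variant is read off from \eqref{KP:CM:TL2}. The principal obstacle I foresee is the intermediate regime $\gamma\ge\gamt$ or $\gamma\ge\gamb$: one has to inspect the proof of Theorem~\ref{thm:CM:TL:B} in order to confirm that $\gamt$ and $\gamb$ genuinely bound the number of Coifman--Meyer estimates on $\sigma$ that are invoked, which is precisely what motivates the specific form of those quantities.
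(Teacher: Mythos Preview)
Your proposal is correct and follows essentially the same approach as the paper: identify $u_\infty=T_{1/\lambda}(f,g)$ with $\lambda(\xi,\eta)=|\xi|^\gamma+|\eta|^\gamma$, verify the convergence in $\swp$ by dominated convergence, and then reduce the estimates to Theorem~\ref{thm:CM:TL:B} with $m=-\gamma$ via the same three-case split (even $\gamma$; large $\gamma$ via the finite-smoothness observation in Remark~\ref{re:numderiv2}; small non-even $\gamma$ via a cutoff supported in $\Ss_{\delta/2}$). One small point worth noting: your choice of a smooth $0$-homogeneous cutoff $\chi$ is in fact more natural than the Schwartz cutoff $h$ written in the paper, since $\Ss_\delta$ is an unbounded cone and no Schwartz function can be identically $1$ on it; the $0$-homogeneous choice makes the verification that $\chi/\lambda$ is a Coifman--Meyer multiplier of order $-\gamma$ clean.
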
 

For $\delta>0$ define
$$
\widetilde{\Ss}_{\delta}=\{(\xi,\eta)\in \re^{2n}: \abs{\eta}\le \delta^{-1}(1+\abs{\xi}^2)^{\frac{1}{2}}\text{ and }\abs{\xi}\le \delta^{-1}(1+\abs{\eta}^2)^{\frac{1}{2}}\}.
$$

\begin{theorem}\label{thm:scattering2} Consider  $0 < p, p_1, p_2  \le \infty$  such that $\hcline$ and  $0 < q \leq \infty;$ let  $w_1,w_2\in A_\infty$ and set $w=w_1^{{p}/{p_1}} w_2^{{p}/{p_2}}.$ Fix $\gamma>0;$ if $\gamma$ is even, or $\gamma\ge \gamt$ in the setting of Triebel--Lizorkin spaces, or $\gamma\ge \gamb$ in the setting of Besov spaces, assume $f, g \in \sw;$ otherwise, assume that $f,g\in\sw$ are such that $\fhat(\xi)\ghat(\eta)$ is supported in $\widetilde{\Ss}_{\delta}$ for some $0<\delta\ll1.$ Consider the system 
\begin{equation}\label{eq:Js:Js}
\left\{ \begin{array}{lll}  \partial_t u =vw, & \partial_t v +J^\gamma v = 0, & \partial_t w + J^\gamma w = 0, \\
  u(0,x)=0,&v(0,x)=f(x),&w(0,x) = g(x).
 \end{array} \right.
\end{equation}
If $0 < p,p_1,p_2 < \infty$ and  $s > \tau_{p,q}(w),$ the solution $u$ of \eqref{eq:Js:Js}  scatters in $\itlw{p}{s}{q}{w}$ to a function $u_\infty$ that satisfies the following estimates: 
\begin{equation}\label{eq:scattering21}
\norm{u_\infty}{\itlw{p}{s}{q}{w}} \lesssim \norm{f}{\itlw{p_1}{s-\gamma}{q}{w_1} } \norm{g}{h^{p_2}(w_2)} +  \norm{f}{h^{p_1}(w_1)}   \norm{g}{\itlw{p_2}{s-\gamma}{q}{w_2} },
\end{equation}
where the implicit constant is independent of $f$ and $g.$  
If $0< p, p_1,p_2\leq \infty$ and $s > \tau_p(w)$, the solution $u$ of \eqref{eq:Js:Js}  scatters in $\ibesw{p}{s}{q}{w}$ to a function $u_\infty$ that satisfies the following estimates
\begin{equation}\label{eq:scattering22}
\norm{u_\infty}{\ibesw{p}{s}{q}{w}} \lesssim \norm{f}{\ibesw{p_1}{s-\gamma}{q}{w_1} } \norm{g}{h^{p_2}(w_2)} +  \norm{f}{h^{p_1}(w_1)}   \norm{g}{\ibesw{p_2}{s-\gamma}{q}{w_2} },
\end{equation}
where the Hardy spaces $h^{p_1}(w_1)$ and $h^{p_2}(w_2)$ must be replaced by $L^\infty$ if $p_1=\infty$ or $p_2=\infty,$ respectively, and the implicit constant is independent of $f$ and $g.$  
If $w_1=w_2$ then different pairs of $p_1, p_2$ can be used on the right-hand sides of \eqref{eq:scattering21} and \eqref{eq:scattering22}; moreover, if $w\in A_\infty,$ then 
\begin{equation*}
\norm{u_\infty}{\itlw{p}{s}{q}{w}} \lesssim \norm{f}{\itlw{p}{s-\gamma}{q}{w} } \norm{g}{L^\infty} +  \norm{f}{L^\infty}   \norm{g}{\itlw{p}{s-\gamma}{q}{w}},
\end{equation*}
where $0<p<\infty,$ $0<q\le\infty,$  $s>\tau_{p,q}(w),$ and the implicit constant is independent of $f$ and $g.$  
\end{theorem}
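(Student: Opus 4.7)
The plan is to follow the scheme of Theorem~\ref{thm:scattering}, adapted to the inhomogeneous setting. Setting $\lambda(\xi,\eta)=(1+\abs{\xi}^2)^{\gamma/2}+(1+\abs{\eta}^2)^{\gamma/2}$, we have $\lambda\ge 2$ on $\rtn$, and formula \eqref{u:T:lambda} yields $u(t,x)=T_{m_t}(f,g)(x)$ with $m_t(\xi,\eta)=(1-e^{-t\lambda(\xi,\eta)})/\lambda(\xi,\eta)$. Since $\abs{m_t}\le 1/2$ uniformly in $t$ and $m_t\to m_\infty:=1/\lambda$ pointwise as $t\to\infty$, the dominated convergence theorem applied to $\langle u(t,\cdot),\phi\rangle=\int_{\rtn} m_t(\xi,\eta)\fhat(\xi)\ghat(\eta)\widehat{\phi}(\xi+\eta)\dxi\deta$ for $\phi\in\sw$ shows that $u(t,\cdot)$ converges in $\swp$ to $u_\infty:=T_{m_\infty}(f,g)$.

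Next I would verify that $m_\infty$, possibly after a smooth cutoff, is an inhomogeneous Coifman--Meyer multiplier of order $-\gamma$, that is,
\[ \abs{\partial_\xi^\alpha \partial_\eta^\beta (1/\lambda)(\xi,\eta)} \lesssim (1+\abs{\xi}+\abs{\eta})^{-\gamma-(\abs{\alpha}+\abs{\beta})}. \]
Since $\lambda(\xi,\eta)=A(\xi)+B(\eta)$ with $A(\xi)=(1+\abs{\xi}^2)^{\gamma/2}$ and $B(\eta)=(1+\abs{\eta}^2)^{\gamma/2}$, mixed derivatives with $\alpha,\beta\ne 0$ of $\lambda$ vanish, and a Fa\`a di Bruno expansion reduces the estimate for $1/\lambda$ to the bounds $\abs{\partial^\alpha A(\xi)}\lesssim (1+\abs{\xi})^{\gamma-\abs{\alpha}}$ together with the analogous bound for $B$. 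When $\gamma\in 2\naz$, $A$ and $B$ are polynomials and all orders are admissible. When $\gamma\ge \gamt$ in the Triebel--Lizorkin setting or $\gamma\ge \gamb$ in the Besov setting, only a fixed finite number of derivative conditions is needed by the proof of Theorem~\ref{thm:ICM:TL:B}, and each of them is satisfied for such $\gamma$.

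For a general positive $\gamma$ the Fourier-support assumption $\supp(\fhat(\xi)\ghat(\eta))\subset \widetilde{\Ss}_{\delta}$ becomes essential: on $\widetilde{\Ss}_{\delta}$ the quantities $(1+\abs{\xi}^2)^{1/2}$ and $(1+\abs{\eta}^2)^{1/2}$ are comparable. I would select a smooth cutoff $\chi(\xi,\eta)$ equal to $1$ on $\widetilde{\Ss}_{\delta}$ and supported in $\widetilde{\Ss}_{\delta/2}$, and work with $\widetilde{m}_\infty:=\chi/\lambda$. On the support of $\chi$ the comparability of $1+\abs{\xi}$ and $1+\abs{\eta}$ allows a direct verification that $\widetilde{m}_\infty$ satisfies the inhomogeneous Coifman--Meyer estimates of order $-\gamma$, while the Fourier-support hypothesis ensures $T_{\widetilde{m}_\infty}(f,g)=T_{m_\infty}(f,g)=u_\infty$; the same cutoff lets one repeat the $\swp$-convergence argument with $\chi\, m_t$ in place of $m_t$.

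With the Coifman--Meyer property secured, Theorem~\ref{thm:ICM:TL:B} with $m=-\gamma$ delivers \eqref{eq:scattering21} and \eqref{eq:scattering22}, together with the $w_1=w_2$ variant. The main obstacle will be this last case: constructing the cutoff $\chi$ so that $\chi/\lambda$ globally obeys the inhomogeneous Coifman--Meyer conditions, and checking that the Fourier-support restriction genuinely circumvents the finite regularity coming from the fractional exponent $\gamma/2$ when $\gamma$ is neither even nor above the thresholds $\gamt$ and $\gamb$.
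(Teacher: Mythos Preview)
Your proposal is correct and follows essentially the same approach as the paper: the paper's proof of Theorem~\ref{thm:scattering2} consists of a single line instructing the reader to proceed as in the proof of Theorem~\ref{thm:scattering} with $\lambda(\xi,\eta)=(1+\abs{\xi}^2)^{\gamma/2}+(1+\abs{\eta}^2)^{\gamma/2}$ and to apply Theorem~\ref{thm:ICM:TL:B}, and you have faithfully carried out exactly that adaptation (dominated convergence for the $\swp$-limit, the case split on $\gamma$ even / $\gamma$ above the thresholds / general $\gamma$ with the cutoff on $\widetilde{\Ss}_\delta$, and the invocation of Remark~\ref{re:numderiv2}). Your identified ``main obstacle'' about the cutoff $\chi$ is real but routine, and the paper handles it in the homogeneous case with the same brevity.
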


\section{Preliminaries}\label{sec:prelim}

In this section we set some notation and present definitions and  results about weights, the scales of weighted Triebel--Lizorkin, Besov and Hardy spaces, and Coifman--Meyer multiplier operators.

The notations $\sw$ and $\swp$ are used for the Schwartz class of smooth rapidly decreasing functions defined on $\rn$ and its dual, the class of tempered distributions on $\rn$, respectively. $\swz$ refers to the closed subspace of functions in $\sw$ that have vanishing moments of all orders; that is, $f\in \swz$ if and only if $f\in \sw$ and $\int_{\rn}x^\alpha f(x)\,dx=0$ for all $\alpha\in \na_0^n.$ Its dual is  $\mathcal{S}'_0(\rn),$  which coincides with the class of tempered distributions modulo polynomials denoted by $\swp/\mathcal{P}(\rn).$ Throughout, all functions are defined on $\rn$ and therefore we omit $\rn$ in the notation of the  function spaces defined below.

 A weight on $\rn$ is a nonnegative, locally integrable function defined on $\rn$. Given $1<p<\infty,$ the Muckenhoupt class $A_p$ consists of all weights $w$ on $\rn$ such that  
 \begin{equation*}
 \sup_B\left(\frac{1}{\abs{B}}\int_Bw(x)\dx\right)\left(\frac{1}{\abs{B}}\int_Bw(x)^{-\frac{1}{p-1}}\dx\right)^{p-1}<\infty,
 \end{equation*}
where the supremum is taken over all Euclidean balls $B\subset \rn$ and $\abs{B}$ means the Lebesgue measure of $B;$ it follows that $A_p\subset A_{q}$ if $p\le q.$ We set $A_\infty=\bigcup_{p>1} A_p$ and  recall that, for $w\in A_\infty,$  $\tau_w=\inf\{\tau\in (1,\infty): w\in A_\tau\}.$ Note that  the conditions $0<r<p$ and $w\in A_{p/r}$  are equivalent to stating that  $0<r<p/\tau_w.$

 Given  $w\in A_\infty$  and $0<p\le\infty,$ we denote by $\lebw{p}{w}$ the space of measurable functions defined on $\rn$ such that 
 $$
\norm{f}{\lebw{p}{w}}=\left(\int_\rn\abs{f(x)}^pw(x)\dx\right)^{\frac{1}{p}}<\infty,
 $$
 with the corresponding change when $p=\infty.$ When $w=1,$ we simply write $L^p.$ Note that $L^\infty(w)=L^\infty$ for all $w\in A_\infty.$
 
 For a locally integrable function $f$ defined on $\rn,$ $\M (f)$  denotes the Hardy-Littlewood maximal function of $f$, that is
  \begin{equation*}
 \M (f)(x)=\sup_{x\in B}\frac{1}{\abs{B}}\int_B\abs{f(y)}dy\quad \forall x\in \rn,
 \end{equation*}
 where the supremum is taken over all Euclidean balls $B\subset\rn$ containing $x.$  Moreover, for $0<r<\infty,$ we set  $\M_r(f)=(\M(\abs{f}^r))^{1/r}.$

 We recall that if $1<p<\infty,$ then $\M$ is bounded on $\lebw{p}{w}$ if and only if $w\in A_p.$ In particular, $\M_r$ is bounded on $\lebw{p}{w}$ for $0<r<p$ and $w\in A_{p/r}$ (i.e. $0<r<p/\tau_w$). We will  also use the following vector-valued version of such result,  the weighted Fefferman-Stein inequality:
If $0<p<\infty,$ $0<q\le \infty,$  $0<r <\min(p,q)$ and $w \in A_{p/r}$ (i.e. $0<r<\min(p/\tau_w,q)$), then for all sequences $\{f_{j}\}_{j\in\ent}$ of locally integrable functions defined on $\rn,$ we have
 \begin{equation*}
 \norm{\left(\sum_{j\in\ent}\abs{\M_r (f_j)}^q\right)^{\frac{1}{q}}}{\lebw{p}{w}}\lesssim
 \norm{\left(\sum_{j\in\ent}\abs{f_j}^q\right)^{\frac{1}{q}}}{\lebw{p}{w}},\label{eq:wFS}
 \end{equation*}
where the implicit constant depends on $r,$ $p,$ $q,$ and $w$ and the summation in $j$ should be replaced by the supremum in $j$ if $q=\infty.$

The Fourier transform of a tempered distribution $f\in\swp$ is denoted by   $\fhat$; in particular, for $f\in L^1,$ we use the formula
\begin{equation*}
\widehat{f}(\xi)= \int_{\rn} f(x) e^{-2\pi i \xi\cdot x} \dx\quad \forall \xi\in\rn.
\end{equation*}
If $j\in \ent$ and  $h\in \sw,$ the operator $P_j^h$ is defined so that $\widehat{P_j^h f}(\xi)=h(2^{-j}\xi)\fhat(\xi)$ for $f\in \sw$ and $\xi\in \rn.$  If $\widehat{h}$ is supported in an annulus centered at the origin we will use the notation $\Delta_j^h$ rather than $P_j^h;$ if  $\widehat{h}$ is supported in a ball centered at the origin and $\widehat{h}(0)\neq 0,$   $S_j^h$ will be used instead of  $P_j^h.$
For $y\in\rn$ denote by $\tau_y$ the operator given by $\tau_y h(x)=h(x+y)$ for  $x\in\rn.$

We next record a lemma that  will be useful in the proof of the main results.

\begin{lemma}\label{lem:pointineq} Let $\phi_1,\phi_2\in \sw$ be  such that $\widehat{\phi_1}$ and $\widehat{\phi_2}$ have compact supports and  $\widehat{\phi_1}\widehat{\phi_2}=\widehat{\phi_1}.$  If $0<r\le 1$ and $\varepsilon>0,$ it holds that
\begin{align*}
\abs{P^{\tau_a\phi_1}_{j}f(x)}\lesssim (1+\abs{a})^{\varepsilon+\frac{n}{r}} \M_r(P^{\phi_2}_{j}f)(x)\quad \forall x,a\in\rn, j\in\ent, f\in\sw.
\end{align*}
\end{lemma}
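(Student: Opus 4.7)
The plan is to combine the reproducing identity arising from the hypothesis $\widehat{\phi_1}\widehat{\phi_2}=\widehat{\phi_1}$ with the classical Peetre maximal estimate for band-limited functions. I would first note that $\widehat{\tau_a\phi_1}(\zeta)=e^{2\pi i a\cdot\zeta}\widehat{\phi_1}(\zeta)$, so the hypothesis immediately yields
\begin{equation*}
\widehat{\tau_a\phi_1}(2^{-j}\xi)\,\widehat{\phi_2}(2^{-j}\xi)=\widehat{\tau_a\phi_1}(2^{-j}\xi),
\end{equation*}
which is the reproducing identity $P_j^{\tau_a\phi_1}f=P_j^{\tau_a\phi_1}(P_j^{\phi_2}f)$ on the operator side. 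Writing the outer operator as convolution with its rescaled kernel then gives the representation
\begin{equation*}
P_j^{\tau_a\phi_1}f(x)=\int_{\rn}2^{jn}\phi_1\bigl(2^j(x-y)+a\bigr)\,P_j^{\phi_2}f(y)\,dy.
\end{equation*}

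Next, because $\widehat{\phi_2}$ has compact support, $P_j^{\phi_2}f$ is band-limited at scale $2^j$, and the Peetre (or Plancherel--P\'olya--Nikol'skij) maximal inequality yields, for any $0<r\le 1$ and any $A>n/r$,
\begin{equation*}
|P_j^{\phi_2}f(y)|\lesssim (1+2^j|x-y|)^{A}\,\M_r(P_j^{\phi_2}f)(x) \quad \forall x,y\in\rn.
\end{equation*}
I would take $A=n/r+\varepsilon$, substitute this into the integral above, pull $\M_r(P_j^{\phi_2}f)(x)$ outside, and then perform the change of variable $w=2^j(x-y)+a$, which reduces the problem to estimating
\begin{equation*}
\int_{\rn}|\phi_1(w)|\,(1+|w-a|)^{n/r+\varepsilon}\,dw.
\end{equation*}

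The final step is to absorb the translation: the elementary inequality $1+|w-a|\le(1+|w|)(1+|a|)$ factors out the desired $(1+|a|)^{n/r+\varepsilon}$ and leaves $\int_{\rn}|\phi_1(w)|(1+|w|)^{n/r+\varepsilon}\,dw$, which is a finite constant by the Schwartz decay of $\phi_1$. I do not anticipate a serious obstacle once the Peetre estimate is in hand: the need for $\varepsilon>0$ in the exponent corresponds precisely to the strict inequality $A>n/r$ in Peetre's inequality, and the hypothesis $\widehat{\phi_1}\widehat{\phi_2}=\widehat{\phi_1}$ is tailor-made so that the reproducing identity survives the Fourier translation introduced by $\tau_a$. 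The most substantive ingredient is therefore the Peetre maximal estimate for band-limited functions, which belongs to the paper's preliminary Nikol'skij-type machinery; everything else is routine bookkeeping exploiting the Schwartz decay of $\phi_1$.
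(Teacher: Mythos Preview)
Your proof is correct and follows essentially the same strategy as the paper: both invoke the reproducing identity $P_j^{\tau_a\phi_1}f=P_j^{\tau_a\phi_1}(P_j^{\phi_2}f)$ and then control the convolution against the band-limited function $P_j^{\phi_2}f$ via a Peetre-type maximal estimate, using the Schwartz decay of $\phi_1$ to absorb the weight and extract the factor $(1+|a|)^{\varepsilon+n/r}$. The only cosmetic difference is that the paper packages the Peetre step by citing Lemma~\ref{coro:2:11} (which yields an $L^\infty$ bound on the weighted kernel $(1+|2^j\cdot|)^{\varepsilon+n/r}\tau_a\phi_1(2^j\cdot)$), whereas you unpack the same estimate by hand and end up with an $L^1$ integral $\int|\phi_1(w)|(1+|w|)^{\varepsilon+n/r}\,dw$; both are finite for $\phi_1\in\sw$ and lead to the same conclusion.
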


\begin{proof} The estimate is a consequence of Lemma~\ref{coro:2:11} in  Appendix~\ref{sec:appendix} as we next show. In view of the supports of $\widehat{\phi_1}$ and $\widehat{\phi_2}$ we have $P^{\tau_a\phi_1}_{j}f=P^{\tau_a\phi_1}_{j}P^{\phi_2}_{j}f$ for  $j\in \ent $ and $f\in \sw.$ Applying Lemma~\ref{coro:2:11} with $\phi(x)=2^{nj}\tau_a \phi_1(2^j x),$ $A=2^j,$ $R\ge 1$ such that $\supp(\widehat{\phi_2})\subset \{\xi\in\rn:\abs{\xi}\le R\}$ and $d=\varepsilon+n/r,$ we get
\begin{align*}
\abs{P^{\tau_a\phi_1}_{j}f(x)}&\lesssim R^{n(\frac{1}{r}-1)} 2^{-jn}\norm{(1+\abs{2^j\cdot})^{\varepsilon+\frac{n}{r}}2^{nj}\tau_a \phi_1(2^j\cdot)}{L^\infty}\M_r(P^{\phi_2}_{j}f)(x)\\
&\sim \norm{(1+\abs{2^j\cdot})^{\varepsilon+\frac{n}{r}}\tau_a \phi_1(2^j\cdot)}{L^\infty}\M_r(P^{\phi_2}_{j}f)(x)\quad \forall x,a\in \rn, j\in \ent, f\in\sw.
\end{align*}
Since $\phi_1\in \sw,$ 
\[
\abs{\tau_a \phi_1(2^jx)}=\abs{\phi_1(2^j x +a)}\lesssim \frac{(1+\abs{a})^{\varepsilon+\frac{n}{r}}}{(1+\abs{2^jx})^{\varepsilon+\frac{n}{r}}}\quad \forall x,a\in \re,j\in \ent.
\]
Putting altogether we obtained the desired result.
\end{proof}

\subsection{Weighted Triebel--Lizorkin, Besov and Hardy spaces}\label{sec:spaces} 
Let $\psi$ and $\f$ denote functions in $\sw$ satisfying  the following conditions:
\begin{align*}
 &\supp(\widehat{\psi})\subset \{\xi\in\rn: \fr{1}{2}<\abs{\xi}<2\},\\
 &|\widehat{\psi}(\xi)|>c\quad {\text{ for all }}\xi \text{ such that } \fr{3}{5}<\abs{\xi}<\fr{5}{3} \text{ and some } c>0,\\
 &\supp(\widehat{\f})\subset \{\xi\in\rn: \abs{\xi}<2\},\\
 &|\widehat{\f}(\xi)|>c\quad {\text{ for all }}\xi \text{ such that } \abs{\xi}<\fr{5}{3} \text{ and some } c>0.\\
 \end{align*}
For $s\in\re,$  $0<p<\infty,$  $0<q\le \infty$ and $w\in A_\infty,$   the weighted homogeneous  Triebel-Lizorkin space $\tlw{p}{s}{q}{w}$ consists of all $f\in \swp/\mathcal{P}(\rn)$ such that 
\begin{equation*}
\norm{f}{\tlw{p}{s}{q}{w}}=\norm{\left(\sum_{j\in\ent}(2^{sj}|\Delta^\psi_jf|)^q\right)^{\frac{1}{q}}}{\lebw{p}{w}}<\infty.
\end{equation*}
 Similarly, the weighted inhomogeneous  Triebel-Lizorkin space   $\itlw{p}{s}{q}{w}$  is the class of all $f\in \swp$ such that 
\begin{equation*}
\norm{f}{\itlw{p}{s}{q}{w}}=\norm{S_0^{\f}f}{\lebw{p}{w}}+\norm{\left(\sum_{j\in\na}(2^{sj}|\Delta^\psi_jf|)^q\right)^{\frac{1}{q}}}{\lebw{p}{w}}<\infty.
\end{equation*}
 In both cases, the summation in $j$  is replaced by the supremum in $j$ if $q=\infty.$   For $s\in\re,$  $0<p,q\le \infty,$ and  $w\in A_\infty,$ the weighted homogeneous and inhomogeneous  Besov spaces are denoted by $\besw{p}{s}{q}{w}$ and $\ibesw{p}{s}{q}{w},$ respectively. They are defined analogously to the weighted Triebel-Lizorkin spaces by interchanging the order of the quasi-norms  in $\ell^q$ and $\lebw{p}{w}.$ 
 
 The definitions above are independent of the choice of the functions $\psi$ and $\f$ and all weighted Triebel--Lizorkin and Besov spaces  are quasi-Banach spaces (Banach spaces for $p,q\ge 1$).  The classes $\swz$ and $\sw$ are contained, respectively, in the weighted homogeneous and  inhomogeneus Triebel--Lizorkin and Besov spaces, and are  dense for finite values of $p$ and $q;$    $\sw$ is also contained in $\tlw{p}{s}{q}{w}$ and $\besw{p}{s}{q}{w}$  for  $p>1,$ $0<q \le \infty,$  $s>0,$ and $w\in A_p.$  We recall the so-called lifting properties: for any $p,$ $q$ and $s$ as in the definitions and for any $w\in A_\infty,$   it follows that
  \begin{align*}
 & \norm{f}{\tlw{p}{s}{q}{w}}\simeq\norm{D^s f}{\tlw{p}{0}{q}{w}} \quad \text{ and } \quad \norm{f}{\itlw{p}{s}{q}{w}}\simeq\norm{J^s f}{\itlw{p}{0}{q}{w}},
 \end{align*}
 with a corresponding statement for  Besov spaces. The reader is directed to  classical references such as  Frazier--Jawerth~\cite{MR808825, MR1070037},  Frazier--Jawerth--Weiss~\cite{MR1107300},  Peetre~\cite{MR0380394, MR0461123}, Qui~\cite{MR676560} and Triebel~\cite{MR3024598}  for the theory of Triebel-Lizorkin  and Besov spaces. 

Let $\phi\in \sw$ be such that $\int_{\rn}\phi(x)\dx\neq 0.$ Given $0<p<\infty,$ the Hardy space $H^p(w)$ is defined as the class of tempered distributions such that
\[
\norm{f}{H^p(w)}:= \norm{\sup_{0<t<\infty} \abs{t^{-n}\phi(t^{-1}\cdot)\ast f}}{L^p(w)};
\]
 the local Hardy spaced $h^p(w)$ consists of all tempered distributions such that
\[
\norm{f}{h^p(w)}:= \norm{\sup_{0<t<1} \abs{t^{-n}\phi(t^{-1}\cdot)\ast f}}{L^p(w)} .
\]
It turns out that $H^p(w)\simeq\tlw{p}{0}{2}{w}$ and $h^p(w)\simeq\itlw{p}{0}{2}{w}$ for $0<p<\infty$ and $w\in A_\infty.$ Moreover, $h^p(w)\simeq L^p(w)\simeq H^p(w)$ for $1<p<\infty$ and $w\in A_p.$ See   \cite[Theorem 1.4 and Remark 4.5]{MR676560}. The lifting property and the latter observations imply that $\tlw{p}{s}{2}{w}\simeq \dot{W}^{s,p}(w)$  and $\itlw{p}{s}{2}{w}\simeq W^{s,p}(w)$ for $0<p<\infty$ and $w\in A_\infty,$ where $\dot{W}^{s,p}(w)$ and ${W}^{s,p}(w)$ are the weighted Sobolev spaces defined by 
\begin{align*}
&\dot{W}^{s,p}(w)=\{f:\rn\to \com: D^sf\in H^p(w)\}, \quad \norm{f}{\dot{W}^{s,p}(w)}=\norm{D^sf}{H^p(w)},\\
&{W}^{s,p}(w)=\{f:\rn\to \com: J^sf\in h^p(w)\}, \quad \norm{f}{{W}^{s,p}(w)}=\norm{J^sf}{h^p(w)}.
\end{align*}
When $1<p<\infty$ and $w\in A_p,$   $H^p(w)$ and $h^p(w)$ in the definitions of  $\dot{W}^{s,p}(w)$ and ${W}^{s,p}(w),$  respectively, are just $L^p(w).$

\subsection{Nikol'skij representations for weighted Triebel-Lizorkin and Besov spaces}

The next theorem states the Nikol'skij representations of weighted  homogeneous and inhomogeneous Triebel-Lizorkin and Besov spaces with weights in $A_\infty$. It represents a weighted version of   \cite[Theorem 3.7]{MR837335} (see also \cite[Section 2.5.2]{MR3024598}), where the unweighted inhomogeneous case was studied. For  completeness, a sketch of its proof is outlined in Appendix~\ref{sec:appendix}.

\begin{theorem}\label{thm:Nikolskij:weighted} For $\A> 0,$ let $\{u_j\}_{j \in \ent} \subset \mathcal{S}'(\rn)$ be a sequence of tempered distributions such that
\begin{equation*}
\supp(\widehat{u_j}) \subset B(0, \A\, 2^j ) \quad \forall j \in \ent.
\end{equation*}
If $w\in A_\infty,$ then the following holds:  
\begin{enumerate}[(i)]
\item\label{item:thh:Nikolskij:TL} Let $0 < p < \infty$, $0 < q \leq \infty$ and $s > \tau_{p,q}(w)$. If $\norm{\{2^{js} u_j\}_{j\in\ent}}{L^p(w)(\ell^{q})} < \infty$, then the series $\sum_{j \in \ent} u_j$ converges in $\tlw{p}{s}{q}{w}$ (in $\mathcal{S}'_0(\rn)$ if $q=\infty$) and 
\begin{equation*}
\norm{\sum_{j \in \ent} u_j}{\tlw{p}{s}{q}{w}} \lesssim  \norm{\{2^{js} u_j\}_{j\in\ent}}{L^p(w)(\ell^{q})},
\end{equation*}
where the implicit constant depends only on $n,$ $\A,$ $s,$ $p$ and  $q.$  An analogous statement, with $j\in\naz,$ holds true for $\itlw{p}{s}{q}{w}$ (when $q=\infty,$  the convergence is in $\swp$).
\item\label{item:thh:Nikolskij:B} Let $0 < p, q \leq \infty$ and $s > \tau_p(w)$. If $\norm{\{2^{js} u_j\}_{j\in\ent}}{\ell^{q}(L^p(w))} < \infty$, then the series $\sum_{j \in \ent} u_j$ converges in  $\besw{p}{s}{q}{w}$ (in $\mathcal{S}'_0(\rn)$ if $q=\infty$) and 
\begin{equation*}
\norm{\sum_{j \in \ent} u_j}{\besw{p}{s}{q}{w}} \lesssim  \norm{\{2^{js} u_j\}_{j\in\ent}}{\ell^{q}(L^p(w))},
\end{equation*}
where the implicit constant depends only on $n,$ $\A,$ $s,$ $p$ and $q.$   An analogous statement, with $j\in\naz,$ holds true for $\ibesw{p}{s}{q}{w}$ (when $q=\infty,$  the convergence is in $\swp$).
\end{enumerate}
\end{theorem}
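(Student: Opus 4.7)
The plan is to prove part (\ref{item:thh:Nikolskij:TL}) by Littlewood--Paley analysis on the partial sums $S_N:=\sum_{|j|\le N} u_j$, establishing a norm bound uniform in $N$ and then passing to the limit via quasi-Banach completeness of $\tlw{p}{s}{q}{w}$; part (\ref{item:thh:Nikolskij:B}) follows from the same scheme with the $L^p(w)$ and $\ell^q$ quasi-norms exchanged (simplified since Minkowski applies directly to the outer $\ell^q$), and the inhomogeneous statements come from isolating the low-frequency piece $S_0^\f$ and running the identical argument on the remaining high-frequency sum.

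The first move is a Fourier-support reduction: since $\supp\widehat{\psi}(2^{-k}\cdot)\subset\{2^{k-1}<|\xi|<2^{k+1}\}$ while $\supp\widehat{u_j}\subset B(0,\A 2^j)$, the projection $\Delta^\psi_k u_j$ vanishes unless $j\ge k-J_0$ for a fixed $J_0=J_0(\A)$. Next, select $r\in(0,1]$ with $r<\min(p/\tau_w,q)$ and $s>n(1/r-1)$; such an $r$ exists by the hypothesis $s>\tau_{p,q}(w)$. Applying Lemma~\ref{lem:pointineq} together with the refined off-scale version developed in the appendix (Lemma~\ref{coro:2:11}), with the inner cutoff chosen so that $P_j^{\phi_2}u_j=u_j$, yields a pointwise inequality of the form
\[
|\Delta^\psi_k u_j(x)|\lesssim 2^{(j-k)_+\, n(1/r-1)}\,\M_r(u_j)(x), \qquad j\ge k-J_0.
\]
Multiplying by $2^{ks}$ and reindexing $\ell=j-k$ converts the resulting sum over $j$ into a discrete convolution against the sequence $\{2^{-\ell s}\cdot 2^{\ell_+ n(1/r-1)}\}_{\ell\ge -J_0}$, which is absolutely summable precisely because $s>n(1/r-1)$.

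The convolution inequality in $\ell^q$ (Minkowski for $q\ge1$, the $q$-subadditivity $(\sum a_i)^q\le\sum a_i^q$ for $0<q<1$) then gives
\[
\Bigl(\sum_{k\in\ent}2^{ksq}|\Delta^\psi_k S_N(x)|^q\Bigr)^{1/q}\lesssim \Bigl(\sum_{|j|\le N}2^{jsq}\M_r(u_j)(x)^q\Bigr)^{1/q},
\]
and since $r<\min(p/\tau_w,q)$, taking the $L^p(w)$-norm and invoking the weighted vector-valued Fefferman--Stein inequality recorded in Section~\ref{sec:prelim} produces
\[
\norm{S_N}{\tlw{p}{s}{q}{w}}\lesssim \norm{\{2^{js}u_j\}_{|j|\le N}}{L^p(w)(\ell^q)}
\]
uniformly in $N$. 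Applied to differences $S_{N_2}-S_{N_1}$, the same estimate shows that $\{S_N\}$ is Cauchy in $\tlw{p}{s}{q}{w}$, yielding a limit $f$ with the claimed bound.

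The main obstacle is securing the sharp pointwise estimate in the off-diagonal regime $j\gg k$: a naive computation using only the Schwartz decay of $\psi_k$ and Peetre's inequality applied at scale $2^j$ produces the factor $2^{(j-k)n/r}$, which is insufficient and would force the stronger assumption $s>n/r$. The required sharpening $2^{(j-k)n(1/r-1)}$ comes from exploiting that $\Delta^\psi_k u_j$ itself has Fourier support in a ball of radius $\sim 2^k$, permitting a Peetre bound at that smaller scale, and then trading through a Nikol'skij-type inequality to reattach an $\M_r(u_j)$ on the right-hand side; this is precisely what the appendix's Lemma~\ref{coro:2:11} provides. A secondary technicality is the convergence mode when $q=\infty$, since the quasi-norm does not automatically control limits: one pairs $S_N$ with functions in $\swz$ and uses the uniform $L^\infty$-in-$k$ bound on $2^{ks}|\Delta^\psi_k u_j|$ to extract convergence in $\mathcal{S}'_0(\rn)$.
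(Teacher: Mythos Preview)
Your proposal is correct and follows essentially the same route as the paper: the Fourier-support reduction $\Delta^\psi_k u_j=0$ unless $j\ge k-J_0$, the choice of $r<\min(1,p/\tau_w,q)$ with $s>n(1/r-1)$, the pointwise bound $|\Delta^\psi_k u_j|\lesssim 2^{(j-k)n(1/r-1)}\M_r(u_j)$ coming from Lemma~\ref{coro:2:11}, the discrete convolution argument in $\ell^q$ (the paper packages this as Lemma~\ref{eq:seriesineq}), the weighted Fefferman--Stein inequality, and the passage from finite to general families via completeness all match. Two minor remarks: for part~(\ref{item:thh:Nikolskij:B}) the paper uses the $L^p(w)$-norm inequality of Lemma~\ref{coro:2:12(1)} directly rather than a pointwise maximal bound, which avoids needing $r<q$; and for $q=\infty$ the paper obtains convergence in $\mathcal{S}'_0(\rn)$ by embedding into $\besw{p}{s\pm\varepsilon}{1}{w}$ and then invokes the Fatou property of $\tlw{p}{s}{\infty}{w}$, which is a bit cleaner than pairing against test functions.
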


\subsection{Decomposition of Coifman--Meyer bilinear multiplier operators}\label{sec:decomp} For $m\in \re,$ let $\sigma$ be a Coifman--Meyer multiplier of order $m.$
Fix $\Psi \in \sw$ such that 
$$
\supp(\widehat{\Psi}) \subseteq \{ \xi \in \rn : \fr{1}{2} < |\xi| < 2 \} \quad\text{ and }\quad
\sum_{j\in\ent} \widehat{\Psi}(2^{-j}\xi) = 1 \,\,\forall \xi \in \rn \setminus \{ 0 \};
$$
define $\Phi \in \sw$ so that
$$
\widehat{\Phi}(0) \coloneqq 1,\quad \widehat{\Phi}(\xi) \coloneqq \sum_{j \le 0} \widehat{\Psi}(2^{-j}\xi)\quad \forall \xi\in\rn\setminus \{0\}.$$
For $a,b\in\rn,$ $\Do{\tau_a \Psi }{j} f$ and  $\So{ \tau_a \Phi }{j} f$ satisfy $\widehat{\Do{\tau_a \Psi }{j} f}(\xi)=\widehat{\tau_a\Psi}(2^{-j}\xi)\widehat{f}(\xi)=e^{2\pi i 2^{-j}\xi\cdot a} \widehat{\Psi}(2^{-j}\xi)\widehat{f}(\xi)$ and   $\widehat{\So{\tau_b \Phi }{j} f}(\xi)=\widehat{\tau_b\Phi}(2^{-j}\xi)\widehat{f}(\xi)=e^{2\pi i 2^{-j}\xi\cdot b} \widehat{\Phi}(2^{-j}\xi)\widehat{f}(\xi).$ 
By the work of Coifman and Meyer in \cite{MR518170},  given $N\in \na$ such that $N>n,$ it follows that $T_\sigma= T_\sigma^1 + T_\sigma^2$, where, for  $f\in \swz$ ($f\in \sw$ if $m\ge 0$) and $g\in\sw,$
\begin{align}\label{eq:decompT1}
T_\sigma^1(f,g)(x) &= \sum_{a,b\in \ent^n} \frac{1}{(1+|a|^2+|b|^2)^N} \sum_{j\in\ent} \C_j(a,b) \,(\Do{ \tau_a \Psi }{j} f)(x)\, (\So{ \tau_b \Phi }{j} g )(x)\quad \forall x\in \rn,
\end{align}
 the coefficients $\C_j(a,b)$   satisfy
\begin{equation}\label{eq:cjbound}
\abs{\C_j(a,b)}\lesssim 2^{jm}\quad \forall a,b\in\ent^n, j\in \ent,
\end{equation}
with the implicit constant depending on $\sigma,$ and an analogous expression holds for $T_\sigma^2$ with the roles of $f$ and $g$ interchanged. 

If $\sigma$ is an inhomogeneous Coifman--Meyer multiplier of order $m,$ a similar decomposition to \eqref{eq:decompT1} follows  with the summation in $j\in\naz$ rather than $j\in\ent,$  with  $\Delta_0^{\tau_a \Psi}$ replaced by $S_0^{\tau_a\Phi}$ and for $f,g\in \sw.$

\begin{remark}\label{re:numderiv1} For the formula \eqref{eq:decompT1}  and its corresponding counterpart for $T^2_\sigma$ to hold, the condition \eqref{eq:CMm} on the derivatives of $\sigma$ is only needed for multi-indices $\alpha$ and $\beta$ such that  $\abs{\alpha+\beta}\le 2N.$
\end{remark}

\begin{remark} We refer the reader to \cite[Lemma 2.1]{MR3750234} for decompositions in the spirit of \eqref{eq:decompT1} for the larger class of symbols $\dot{BS}^m_{1,1}$ :  such symbols may depend on the space variable $x,$ that is, $\sigma=\sigma(x,\xi,\eta)$ for $x,\xi,\eta\in\rn,$ and are such that for any multi-indices $\gamma,\alpha,\beta\in \na_0^n,$ there exists $C_{\gamma,\alpha,\beta}>0$ such that
\[
\abs{\partial_x^\gamma\partial_\xi^\alpha\partial_\eta^\beta\sigma(x,\xi,\eta)}\le C_{\gamma,\alpha,\beta} (\abs{\xi}+\abs{\eta})^{m+\abs{\gamma}-\abs{\alpha+\beta}}\quad \forall (\xi,\eta)\in \re^{2n}\setminus \{(0,0)\}.
\]  
Note that Coifman--Meyer multipliers of order $m$  belong to $\dot{BS}^m_{1,1}.$ 
\end{remark}

\section{Proofs of Theorems \ref{thm:CM:TL:B} and \ref{thm:ICM:TL:B}}\label{sec:mainproofs}

We only prove Theorem~\ref{thm:CM:TL:B}; the proof of Theorem~\ref{thm:ICM:TL:B} follows along the same lines.

\begin{proof}[Proof of Theorem~\ref{thm:CM:TL:B}] Consider $\Phi,$ $\Psi,$ $T_\sigma^1,$ $T_\sigma^2,$ $\{\C_j(a,b)\}_{j\in\ent,a,b\in\ent^n}$ as in Section~\ref{sec:decomp}. Let $m,$ $\sigma,$ $p,$ $p_1,$ $p_2,$ $q,$ $s,$ $w_1,$ $w_2$ and $w$ be as in the hypotheses.  
For ease of notation, $p_1$ and $p_2$ will be assumed to be finite; the same proof applies for \eqref{KP:CM:B} if that is not the case, and for \eqref{KP:CM:TL2}.

We next prove \eqref{KP:CM:TL} and \eqref{KP:CM:B}. 
 By symmetry,  it is enough to work with $T_\sigma^1$ and prove that 
 \begin{align*}
 \norm{T^{1}_\sigma(f,g)}{\dot{F}^s_{p,q}(w)} \lesssim  \norm{f}{\dot{F}^{s+m}_{p_1, q}(w_1)} \norm{g}{H^{p_2}(w_2)}\quad  \text{ and }\quad 
 \norm{T^1_\sigma(f,g)}{\dot{B}^s_{p,q}(w)} \lesssim  \norm{f}{\dot{B}^{s+m}_{p_1, q}(w_1)} \norm{g}{H^{p_2}(w_2)}.
\end{align*}
  Moreover, since $\norm{\sum f_j}{\tlw{p}{s}{q}{w}}^{\min(p,q,1)}\lesssim \sum\norm{f_j}{\tlw{p}{s}{q}{w}}^{\min(p,q,1)}$  and similarly for $\besw{p}{s}{q}{w}$, it suffices to prove that, given $\varepsilon>0$ there exist $0<r_1,r_2\le 1$  such that for all $g\in \sw$ and  $f\in \swz$ ($f\in \sw\cap \tlw{p}{s}{q}{w}$ or  $f\in \sw\cap \besw{p}{s}{q}{w}$ if $m\ge 0$),  it holds that
\begin{align}
 \norm{T^{a,b}(f,g)}{\dot{F}^s_{p,q}(w)} \lesssim (1+\abs{a})^{\varepsilon+\frac{n}{r_1}}  (1+\abs{b})^{\varepsilon+\frac{n}{r_2}} \norm{f}{\dot{F}^{s+m}_{p_1, q}(w_1)} \norm{g}{H^{p_2}(w_2)}\label{eq:estbbTL},\\
 \norm{T^{a,b}(f,g)}{\dot{B}^s_{p,q}(w)} \lesssim (1+\abs{a})^{\varepsilon+\frac{n}{r_1}}  (1+\abs{b})^{\varepsilon+\frac{n}{r_2}} \norm{f}{\dot{B}^{s+m}_{p_1, q}(w_1)} \norm{g}{H^{p_2}(w_2)}\label{eq:estbbB},
\end{align}
where
\[
T^{a,b}(f,g):=\sum_{j\in\ent} \C_j(a,b) \,(\Do{ \tau_a \Psi }{j} f)\, (\So{ \tau_b \Phi }{j} g )
\]
and the implicit constants are independent of $a$ and $b.$  We will assume $q$ finite; obvious changes apply if that is not the case.

In view of the supports of $\Psi$ and $\Phi$ we have that 
\begin{equation*}
\supp (\mathcal{F}[\C_j(a,b) \,(\Do{ \tau_a \Psi }{j} f ) \, ( \So{ \tau_b \Phi }{j} g )])  \subset \{\xi \in \rn: |\xi| \lesssim 2^j \} \quad \forall j \in \ent,\,a,b\in \ent^n.
\end{equation*}

For \eqref{eq:estbbTL},
Theorem \ref{thm:Nikolskij:weighted}\eqref{item:thh:Nikolskij:TL}, the bound \eqref{eq:cjbound} for $\C_j(a,b)$, and H\"older's inequality  imply
\begin{align*}
\norm{T^{a,b}(f,g)}{\dot{F}^s_{p,q}(w)} & \lesssim \norm{\{2^{sj} \C_j(a,b) \,(\Do{ \tau_a \Psi }{j} f ) \, (\So{ \tau_b \Phi }{j} g)\}_{j\in\ent} }{L^p(w)(\ell^q)}\\
& \lesssim \norm{\left(\sum\limits_{j \in \ent}  2^{(s +m) q j}  |(\Do{ \tau_a \Psi }{j} f )(x) \, (\So{ \tau_b \Phi }{j} g)|^q   \right)^\frac{1}{q}}{L^p(w)}\\
& \le\norm{\sup\limits_{j \in \ent} |(\So{ \tau_b \Phi }{j} g)| \left(\sum\limits_{j \in \ent}  2^{(s +m) q j}  |(\Do{ \tau_a \Psi }{j} f )|^q   \right)^\frac{1}{q}}{L^p(w)}\\
& \le \norm{\left(\sum_{j \in \ent}  2^{(s +m) q j}  |\Do{ \tau_a \Psi }{j} f |^q   \right)^\frac{1}{q}}{L^{p_1}(w_1)} \norm{\sup\limits_{j \in \ent} |\So{ \tau_b \Phi }{j} g|}{L^{p_2}(w_2)}.
\end{align*}
Consider $\varphi,\psi\in\sw$ as in Section~\ref{sec:spaces} such that   $\widehat{\varphi}\equiv 1$ on $\supp(\widehat{\Phi})$ and  $\widehat{\psi}\equiv 1$ on $\supp(\widehat{\Psi}).$  Let   $0<r_1<\min(1, p_1/\tau_{w_1},q)$; by Lemma~\ref{lem:pointineq} and the weighted Fefferman-Stein inequality  we have that  
\begin{align*}
\norm{\left(\sum_{j \in \ent}  2^{(s +m) q j}  |(\Do{ \tau_a \Psi }{j} f )|^q   \right)^\frac{1}{q}}{L^{p_1}(w_1)}&\lesssim (1+\abs{a})^{\varepsilon+\frac{n}{r_1}}
\norm{\left(\sum_{j \in \ent}  2^{(s +m) q j}  |\M_{r_1}(\Do{\psi }{j} f) |^q   \right)^\frac{1}{q}}{L^{p_1}(w_1)}\\
&\lesssim (1+\abs{a})^{\varepsilon+\frac{n}{r_1}} \norm{\left(\sum_{j \in \ent}  2^{(s +m) q j}  |\Do{\psi }{j} f|^q   \right)^\frac{1}{q}}{L^{p_1}(w_1)}\\
&\sim (1+\abs{a})^{\varepsilon+\frac{n}{r_1}}  \norm{f}{\dot{F}^{s+m}_{p,q}(w_1)},
\end{align*}
where the implicit constants are independent of $a$ and $f.$ Next, let  $0<r_2<\min(1,p_2/\tau_{w_2})$; by Lemma~\ref{lem:pointineq} and the boundedness properties of the Hardy-Littlewood maximal operator on weighted Lebesgue space  we have that  
\begin{align*}
\norm{\sup_{j\in\ent}|\So{\tau_b\Phi}{j}g|}{L^{p_2}(w_2)}&\lesssim (1+\abs{b})^{\varepsilon+\frac{n}{r_2}} \norm{\M_{r_2}(\sup_{j\in\ent}|\So{\varphi}{j}g|)}{L^{p_2}(w_2)}\\
&\lesssim (1+\abs{b})^{\varepsilon+\frac{n}{r_2}} \norm{\sup_{j\in\ent}|\So{\varphi}{j}g|}{L^{p_2}(w_2)}\\
&\sim (1+\abs{b})^{\varepsilon+\frac{n}{r_2}} \norm{g}{H^{p_2}(w_2)},
\end{align*}
where the implicit constants are independent of $b$ and $g.$ Putting all together we obtain \eqref{eq:estbbTL}.

For \eqref{eq:estbbB},  Theorem \ref{thm:Nikolskij:weighted}\eqref{item:thh:Nikolskij:B}, the bound \eqref{eq:cjbound} for $\C_j(a,b)$ and H\"older's inequality  give
\begin{align*}
\norm{T^{a,b}(f,g)}{\dot{B}^s_{p,q}(w)} & \lesssim \norm{\{2^{sj} \C_j(a,b) \,(\Do{ \tau_a \Psi }{j} f ) \, (\So{ \tau_b \Phi }{j} g)\}_{j\in\ent} }{\ell^q(L^p(w))}\\
& \lesssim \left(\sum\limits_{j \in \ent}  2^{(s +m) q j}  \norm{(\Do{ \tau_a \Psi }{j} f ) \, (\So{ \tau_b \Phi }{j} g)}{L^p(w)}^q   \right)^\frac{1}{q}  \\
&\le  \left(\sum\limits_{j \in \ent}  2^{(s +m) q j}  \norm{(\Do{ \tau_a \Psi }{j} f ) }{L^{p_1}(w_1)}^q   \right)^\frac{1}{q}  \norm{\sup_{k\in \ent}|\So{ \tau_b \Phi }{k} g|}{L^{p_2}(w_2)}\\
& \lesssim  (1+\abs{a})^{\varepsilon+\frac{n}{r_1}}  (1+\abs{b})^{\varepsilon+\frac{n}{r_2}}  \norm{f}{\dot{B}^{s+m}_{p_1, q}(w_1)} \norm{g}{H^{p_2}(w_2)},
\end{align*}
where in the last inequality we have used Lemma~\ref{lem:pointineq} and the boundedness properties of $\M$ with  $0<r_j<\min(1,p_j/\tau_{w_j})$ for $j=1,2$ .

It is clear from the proof above that if $w_1=w_2,$ then  different pairs of $p_1, p_2$ related to $p$ through the H\"older condition can be used on the right-hand sides of \eqref{KP:CM:TL} and \eqref{KP:CM:B}; in such case $w=w_1=w_2.$  
\end{proof}

\begin{remark} \label{re:numderiv2} 
For convergence purposes, the relations between $N$ in \eqref{eq:decompT1} and the powers  $\varepsilon+n/r_1$ and $\varepsilon+n/r_2$ in \eqref{eq:estbbTL} and   \eqref{eq:estbbB} must be such that $(N-\varepsilon-n/r_1)\,r^*>n$ and $(N-\varepsilon-n/r_2)\,r^*>n,$ where $r^*=\min(p,q,1).$ Moreover,  $r_1$ and $r_2$ were selected so that $0<r_j<\min(1, p_j/\tau_{w_j},q)$ in the context of  Triebel--Lizorkin spaces and  $0<r_j<\min(1,p_j/\tau_{w_j})$ in the context of Besov spaces. Therefore, if  $N>n(1/r^*+1/\min(1, p_1/\tau_{w_1},p_2/\tau_{w_2},q))$ in the Triebel--Lizorkin setting and $N>n(1/r^*+1/\min(1, p_1/\tau_{w_1},p_2/\tau_{w_2}))$ in the Besov setting,  $\varepsilon,$  $r_1$ and  $r_2$ can be chosen so that all the conditions above are satisfied. In view of this and Remark~\ref{re:numderiv1},  the multiplier $\sigma$ in Theorem~\ref{thm:CM:TL:B}  needs only satisfy  \eqref{eq:CMm} for    $\abs{\alpha+\beta}\le 2( [n(1/r^*+1/\min(1,p_1/\tau_{w_1},p_2/\tau_{w_2},q))]+1)=\gamt$ in the Triebel--Lizorkin case and $\abs{\alpha+\beta}\le 2( [n(1/r^*+1/\min(1, p_1/\tau_{w_1},p_2/\tau_{w_2}))]+1)=\gamb$ in the Besov case. An analogous observation follows for the multiplier $\sigma$ in Theorem~\ref{thm:ICM:TL:B} in relation to the condition  obtained from \eqref{eq:CMm} with $\abs{\xi}+\abs{\eta}$ replaced by $1+\abs{\xi}+\abs{\eta}.$ 
  \end{remark}

\section{Proofs of Theorems \ref{thm:scattering} and \ref{thm:scattering2}} \label{sec:applications}

\begin{proof}[Proof of Theorem~\ref{thm:scattering}] Using the notation from Section~\ref{sec:scattering},  we have $a(\xi)=\abs{\xi}^\gamma$ and $b(\eta)=\abs{\eta}^\gamma;$ therefore, $\lambda(\xi,\eta)=\abs{\xi}^\gamma+\abs{\eta}^\gamma.$ Note that all corresponding integrals for $v(t,x),$ $w(t,x)$ and $u(t,x)$ are absolutely convergent for $t>0,$ $x\in\rn$ and $f,g\in \sw.$ If we further assume that $f,g\in \swz,$ the Dominated Convergence Theorem implies that $u(t,\cdot)\to u_\infty$  both pointwise and in $\swp,$ where
$$
u_\infty(x)=\int_{\re^{2n}} (a(\xi)+b(\eta))^{-1}\fhat(\xi)\ghat(\eta)\eixxe\dxi\deta=T_{\lambda^{-1}} (f,g)(x).
$$

If $\gamma$ is an even positive integer then $\lambda^{-1}$ satisfies the estimates \eqref{eq:CMm} with $m=-\gamma$ for all $\alpha,\beta\in\na_0^n.$ Then,  all estimates from Theorem~\ref{thm:CM:TL:B} hold for $T_{\lambda^{-1}}$  and therefore the desired estimates follow for $u_\infty$ with constants independent of $f,g\in\swz.$ 

Let $p_1,p_2,p,q, w_1,w_2$ be as in the hypotheses.  If $\gamma>0$ and $\gamma$ is not an even integer, then $\lambda^{-1}$ satisfies the estimates \eqref{eq:CMm} with $m=-\gamma$ as long as  $\alpha,\beta\in\na_0^n$ are such that $\abs{\alpha}<\gamma$ and $\abs{\beta}<\gamma;$ in particular, $\lambda^{-1}$ satisfies \eqref{eq:CMm} with $m=-\gamma$ for $\alpha,\beta\in \na_0^n$ such that $\abs{\alpha+\beta}<\gamma.$
In view of Remark~\ref{re:numderiv2}, all estimates from Theorem~\ref{thm:CM:TL:B} hold for $T_{\lambda^{-1}}$ if $\gamma\ge \gamt$ in the context of Triebel--Lizorkin spaces and if $\gamma\ge \gamb$ in the context of Besov spaces; as a consequence, the desired estimates follow for $u_\infty$ with constants independent of $f,g\in\swz$ for such values of $\gamma.$

On the other hand, if $0<\gamma<\gamt$ in the Triebel-Lizorkin space setting or $0<\gamma<\gamb$ in the Besov space setting, and $\gamma$ is not an even positive integer, consider  $h\in\Ss(\re^{2n}) $ such that $\supp(h)\subset \Ss_{\delta/2}$ and $h\equiv 1$ on $\Ss_{\delta}.$ Then, for $f,g\in \swz$ such that $\fhat(\xi)\ghat(\eta)$ is supported in $\Ss_{\delta}$ we have $h(\xi,\eta)\fhat(\xi)\ghat(\eta)=\fhat(\xi)\ghat(\eta);$   therefore, $T_{\lambda^{-1}}(f,g)=T_{\Lambda}(f,g),$ where
$\Lambda(\xi,\eta)=h(\xi,\eta)/(\abs{\xi}^\gamma+\abs{\eta}^\gamma).$ The multiplier $\Lambda$ verifies  \eqref{eq:CMm} with $m=-\gamma$  for all $\alpha,\beta\in\na_0^n$ (with constants that depend on $\delta$). Then all estimates from Theorem~\ref{thm:CM:TL:B} hold for $T_\Lambda$ and therefore the desired estimates follow for $u_\infty$ with constants dependent on $\delta$ and independent of $f,g\in\swz$ such that  $\fhat(\xi)\ghat(\eta)$ is supported in $\Ss_{\delta}.$
\end{proof}

\begin{proof}[Proof of Theorem~\ref{thm:scattering2}] We proceed as in the proof of Theorem~\ref{thm:scattering} with $\lambda(\xi,\eta)=(1+\abs{\xi}^2)^{\gamma /2}+(1+\abs{\eta}^2)^{\gamma /2}$ and an application of  Theorem~\ref{thm:ICM:TL:B}.  
\end{proof}

 \section{Leibniz-type rules  in other function space settings}\label{sec:more}
 
 In this section we illustrate the fact that the strategy applied in the proofs of Theorems~\ref{thm:CM:TL:B} and \ref{thm:ICM:TL:B} constitutes a unifying approach for obtaining Leibniz-type rules for Coifman--Meyer multipliers in a variety of function spaces. 
 
We start by isolating the main features associated to the weighted Triebel--Lizorkin and Besov spaces used for the proofs of Theorems~\ref{thm:CM:TL:B} and \ref{thm:ICM:TL:B}:
\begin{enumerate}[(i)]
\item\label{item:first} there exists $r>0$ such that $\norm{f+g}{\itlw{p}{s}{q}{w}}^r\le \norm{f}{\itlw{p}{s}{q}{w}}^r+\norm{g}{\itlw{p}{s}{q}{w}}^r;$ similarly for the weighted inhomogeneous  Besov spaces and the weighted homogeneous  Triebel--Lizorkin and Besov spaces;
\item \label{item:second} H\"older's inequality in weighted Lebesgue spaces;
\item \label{item:third}  the boundedness properties in weighted Lebesgue spaces  of the Hardy--Littlewood maximal operator (for the Besov space setting) and the weighted Fefferman--Stein inequality (for the Triebel--Lizorkin space setting);
\item \label{item:last} Nikol'skij representations for weighted Triebel--Lizorkin and Besov spaces (Theorem~\ref{thm:Nikolskij:weighted}).
\end{enumerate}

The method used to prove Theorems~\ref{thm:CM:TL:B} and \ref{thm:ICM:TL:B} can then be effectively  applied to other settings of function spaces of the form $\itl{\mathcal{X}_p}{s}{q}$ and $\ibes{\mathcal{X}_p}{s}{q}$ (or $\tl{\mathcal{X}_p}{s}{q}$ and $\bes{\mathcal{X}_p}{s}{q}$),  where $\mathcal{X}_p$ represents a quasi-Banach space within a given family indexed by $p\in \mathcal{I}$ ($\mathcal{I}$ is some suitable set), and $\itl{\mathcal{X}_p}{s}{q}$ and $\ibes{\mathcal{X}_p}{s}{q}$ are defined in the same way as $\itlw{p}{s}{q}{w}$ and $\ibesw{p}{s}{q}{w},$ respectively, with the  quasi-norm in $\lebw{p}{w}$ replaced by the quasi-norm in the space $\mathcal{X}_p.$ The family $\{\mathcal{X}_p\}_{p\in \mathcal{I}}$ is such that appropriate counterparts of the properties \eqref{item:first}-\eqref{item:last} hold true. As a consequence, versions of Theorems~\ref{thm:CM:TL:B} and \ref{thm:ICM:TL:B} as well as of Theorems~\ref{thm:scattering} and \ref{thm:scattering2} can be obtained in such contexts.

We illustrate the above in the cases where $\mathcal{X}_p$ corresponds to the scale of weighted Lorentz spaces, weighted Morrey spaces and variable-exponent Lebesgue spaces. In such contexts, we state counterparts of Theorem~\ref{thm:ICM:TL:B} for the corresponding inhomogeneous Triebel--Lizorkin spaces as  model results. The statements  for the inhomogeneous Besov-type spaces and for the counterparts of   Theorem~\ref{thm:scattering2}, as well as the details and statements for the homogeneous settings,  are left to the reader.

\subsection{Leibniz-type rules in the settings of Lorentz-based Triebel--Lizorkin and Besov spaces.}

Given $0<p<\infty$ and $0<t\le \infty$ or $p=t=\infty,$ and an $A_\infty$ weight $w$ defined on $\rn,$  we denote by $L^{p,t}(w)$ the weighted Lorentz space consisting of complex-valued, measurable functions $f$ defined on $\rn$ such that
\[
\|f\|_{\lebw{p,t}{w}}=\left(\int_0^\infty \left(\tau^{\frac{1}{p}} f_w^*(\tau)\right)^t\,\frac{d\tau}{\tau}\right)^{\frac{1}{t}}<\infty,
\]
where $f^*_w(\tau)=\inf\{\lambda\ge 0:w_f(\lambda)\le \tau\}$ with
$w_f(\lambda)=w(\{x\in\rn : \abs{f(x)}>\lambda\})$; the obvious changes apply if $t=\infty.$
 It follows that $\lebw{p,p}{w}=\lebw{p}{w}$ for $0<p\le \infty.$  We refer the reader to Hunt~\cite{MR0223874} for more details about Lorentz spaces. 
 
 The corresponding weighted inhomogeneous Triebel--Lizorkin and Besov spaces are denoted by $\itlw{(p,t)}{s}{q}{w}$ and  $\ibesw{(p,t)}{s}{q}{w},$  respectively. These spaces contain $\sw,$ are independent of the choice of $\varphi$ and $\psi$ from Section~\ref{sec:spaces}, are quasi-Banach spaces and have appeared in a variety of settings (see Seeger--Trebels~\cite{ST2018} and references therein). The space $h^{p,t}(w)$ is defined in the same way as $h^p(w)$ with the quasi-norm in $\lebw{p}{w}$ replaced by the quasi-norm in $\lebw{p,t}{w}.$

 We next consider the corresponding properties \eqref{item:first}-\eqref{item:last} in this context. Regarding property \eqref{item:first}, given $0<p<\infty,$  $0<t,q\le \infty$   and $s\in \re,$   it follows that there exist $r>0$ and a quasi-norm $|||\cdot |||_{\lebw{p,t}{w}(\ell^q)}$ comparable to $\|\cdot\|_{\lebw{p,t}{w}(\ell^q)}$ such that $|||\cdot |||^r_{\lebw{p,t}{w}(\ell^q)}$ is subadditive; this is an adecuate substitute for property \eqref{item:first}. The quasi-norm $|||\cdot |||_{\lebw{p,t}{w}(\ell^q)}$ is defined analogously to $\|\cdot \|_{\lebw{p,t}{w}(\ell^q)}$ by replacing $\|\cdot \|_{\lebw{p,t}{w}}$ with a comparable quasi-norm $|||\cdot |||_{\lebw{p,t}{w}}$ for which $|||\cdot |||^r_{\lebw{p,t}{w}}$ is subadditive (see \cite[p. 258,
  (2.2)]{MR0223874}). As for property \eqref{item:second}, weighted Lorentz spaces satisfy a  H\"older-type inequality (see \cite[Thm 4.5]{MR0223874}): Given a weight $w$ in $\rn$ and  indices $0<p, p_1,p_2<\infty$ and $ 0<t, t_1,t_2\le \infty$  such that $\hcline$ and $1/t=1/t_1+1/t_2,$ it holds that
  \begin{equation*}
  \norm{fg}{\lebw{p,t}{w}}\lesssim \norm{f}{\lebw{p_1,t_1}{w}} \norm{g}{\lebw{p_2,t_2}{w}},
  \end{equation*}
 where the implicit constant is independent of $f$ and $g$ ($p_1=t_1=\infty,$ which gives $p=p_2$ and $t_2=t,$ is also allowed). The following boundedness properties of the Hardy--Littlewood maximal operator in  weighted Lorentz spaces (property \eqref{item:third}) hold true: 
  If $0<p<\infty,$  $0<t,q\le\infty,$  $0<r<\min(p/\tau_w,q)$ and $0<r\le t,$ it holds that
\begin{equation}\label{eq:FTLorentz}
\norm{\left(\sum_{j\in\ent} \abs{\M_r(f_j)}^q\right)^{\frac{1}{q}}}{\lebw{p,t}{w}}\lesssim \norm{\left(\sum_{j\in\na_0} \abs{f_j}^q\right)^{\frac{1}{q}}}{\lebw{p,t}{w}}\quad \forall \{f_j\}_{j\in \na_0}\in \lebw{p,t}{w}(\ell^q);  
\end{equation}
in particular, if $0<r<p/\tau_w$ and $0<r\le t,$ it holds that 
\begin{equation*}
\norm{\M_r(f)}{\lebw{p,t}{w}}\lesssim \norm{f}{\lebw{p,t}{w}}\quad \forall f\in \lebw{p,t}{w}. 
\end{equation*}
When $r=1,$ $1<p<\infty,$  $1\le t\le \infty$ and $1<q\le \infty,$  the vector-valued inequality above follows from extrapolation and the weighted Fefferman--Stein inequality in weighted Lebesgue spaces (see   \cite[Theorem 4.10 and comments on p. 70]{MR2797562} for the extrapolation theorem).
The rest of the cases follow from the latter and the fact that $\norm{\abs{f}^s}{\lebw{p,t}{w}}=\norm{f}{\lebw{sp,st}{w}}^s$ for any $0<s<\infty.$ 
Regarding property \eqref{item:last}, the  Nikol'skij representation for $\itlw{(p,t)}{s}{q}{w}$  and $\ibesw{(p,t)}{s}{q}{w}$ with $w\in A_\infty$ can be stated as in  Theorem~\ref{thm:Nikolskij:weighted} with $0 < p< \infty,$ $0<t,q\le \infty$;  $s > (1/\min(p/\tau_w, t, q,1)-1)$ and $\itlw{p}{s}{q}{w}$ replaced with $\itlw{(p,t)}{s}{q}{w}$ in the Triebel--Lizorkin setting;  $s> \tau_{p,t}(w)$ and $\ibesw{p}{s}{q}{w}$ replaced with $\ibesw{(p,t)}{s}{q}{w}$ in the Besov setting. In the context of $\itlw{(p,t)}{s}{q}{w},$ the convergence of the series  holds in $\swp$ if $t=\infty$ or $q=\infty$ and in $\itlw{(p,t)}{s}{q}{w}$ otherwise; in the setting of $\ibesw{(p,t)}{s}{q}{w},$ the convergence of the series holds in $\swp$ if $q=\infty$ and in $\ibesw{(p,t)}{s}{q}{w}$ otherwise. The proofs follow parallel steps to those in the proof of Theorem~\ref{thm:Nikolskij:weighted} (see also Remark~\ref{re:app}).

As an exemplary result, we next present an analogue  to Theorem~\ref{thm:ICM:TL:B} in the context of the spaces $\itlw{(p,t)}{s}{q}{w}.$ For $w\in A_\infty,$ set $\tau_{p,t,q}(w):=n(1/\min(p/\tau_w, t, q, 1)-1).$

\begin{theorem}\label{thm:ICM:Lorentz}  For $m \in \re,$ let $\sigma(\xi,\eta),$ $\xi,\eta\in\rn,$ be an inhomogeneous Coifman-Meyer multiplier of order $m.$ If $w\in A_\infty,$ $0 < p, p_1, p_2 < \infty$ and $0 < t, t_1, t_2\le \infty$  are  such that $\hcline$ and $1/t=1/t_1+1/t_2,$  $0 < q \le \infty$  and  $s > \tau_{p,t,q}(w),$  it holds that
\begin{equation*}
\norm{T_\sigma(f,g)}{\itlw{(p,t)}{s}{q}{w}} \lesssim \norm{f}{\itlw{(p_1,t_1)}{s+m}{q}{w}} \norm{g}{h^{p_2,t_2}(w)} +  \norm{f}{h^{p_1,t_1}(w)}   \norm{g}{\itlw{(p_2,t_2)}{s+m}{q}{w}} \quad \forall f, g \in \sw.
\end{equation*}
Different pairs of $p_1,p_2$ and $t_1, t_2$ can be used on the right-hand side of the inequality above. 
Moreover, if $w\in A_\infty,$ $0<p<\infty,$ $0<t,q\le \infty$   and $s > \tau_{p,t,q}(w),$  it holds that
\begin{equation*}
\norm{T_\sigma(f,g)}{\itlw{(p,t)}{s}{q}{w}} \lesssim \norm{f}{\itlw{(p,t)}{s+m}{q}{w}} \norm{g}{L^\infty} +  \norm{f}{L^\infty}   \norm{g}{\itlw{(p,t)}{s+m}{q}{w}} \quad \forall f, g \in \sw.
\end{equation*}
\end{theorem}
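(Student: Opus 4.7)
The strategy is to parallel the proof of Theorem~\ref{thm:CM:TL:B} (or rather its inhomogeneous counterpart, Theorem~\ref{thm:ICM:TL:B}), replacing each weighted Lebesgue ingredient by its Lorentz-space analogue identified in the discussion preceding the statement. Using the inhomogeneous analogue of the decomposition in Section~\ref{sec:decomp} (with summation in $j\in\naz$ and $\Delta_0^{\tau_a\Psi}$ replaced by $S_0^{\tau_a\Phi}$), write $T_\sigma=T_\sigma^1+T_\sigma^2$, and by symmetry reduce the problem to estimating $T_\sigma^1$. Since the Lorentz-based Triebel--Lizorkin space $\itlw{(p,t)}{s}{q}{w}$ admits a quasi-norm $|||\cdot|||$ whose $r$-th power is subadditive for some $r>0$ (property \eqref{item:first} in the Lorentz setting), it suffices to prove, for each $\varepsilon>0$, the fixed-shift bound
\begin{equation*}
\norm{T^{a,b}(f,g)}{\itlw{(p,t)}{s}{q}{w}}
\lesssim (1+|a|)^{\varepsilon+n/r_1}(1+|b|)^{\varepsilon+n/r_2}\,\norm{f}{\itlw{(p_1,t_1)}{s+m}{q}{w}}\norm{g}{h^{p_2,t_2}(w)},
\end{equation*}
with constants independent of $a,b\in\ent^n$, for suitable $r_1,r_2>0$; the factor $(1+|a|^2+|b|^2)^{-N}$ in the Coifman--Meyer decomposition will then absorb these polynomial growths provided $N$ is chosen large enough as in Remark~\ref{re:numderiv2}.

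Next, since the Fourier support of each term $\C_j(a,b)\,(\Delta_j^{\tau_a\Psi}f)(\So{\tau_b\Phi}{j}g)$ is contained in a ball of radius $\lesssim 2^j$, I would invoke the Lorentz version of the Nikol'skij representation for $\itlw{(p,t)}{s}{q}{w}$ (valid because $s>\tau_{p,t,q}(w)$) to dominate $\norm{T^{a,b}(f,g)}{\itlw{(p,t)}{s}{q}{w}}$ by the $\lebw{p,t}{w}(\ell^q)$ quasi-norm of $\{2^{js}\C_j(a,b)(\Delta_j^{\tau_a\Psi}f)(\So{\tau_b\Phi}{j}g)\}_{j\in\naz}$. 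Using the coefficient bound \eqref{eq:cjbound} and pulling the supremum in $j$ of $|\So{\tau_b\Phi}{j}g|$ outside the $\ell^q$ sum leads to a product of two factors, to which the Hölder-type inequality in Lorentz spaces (with exponents $(p_1,t_1)$ and $(p_2,t_2)$) can be applied to split into the desired two norms.

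The first factor, $\norm{(\sum_j 2^{(s+m)qj}|\Delta_j^{\tau_a\Psi}f|^q)^{1/q}}{\lebw{p_1,t_1}{w}}$, is handled by choosing $\psi\in\sw$ as in Section~\ref{sec:spaces} with $\widehat\psi\equiv 1$ on $\supp(\widehat\Psi)$, applying Lemma~\ref{lem:pointineq} to gain the factor $(1+|a|)^{\varepsilon+n/r_1}$, and then invoking the vector-valued maximal inequality \eqref{eq:FTLorentz} with exponent $r_1$ chosen so that $0<r_1<\min(1,p_1/\tau_w,q)$ and $0<r_1\le t_1$. The second factor, $\norm{\sup_j|\So{\tau_b\Phi}{j}g|}{\lebw{p_2,t_2}{w}}$, is controlled by Lemma~\ref{lem:pointineq} (choosing $\varphi\in\sw$ with $\widehat\varphi\equiv 1$ on $\supp(\widehat\Phi)$) together with the scalar maximal inequality in $\lebw{p_2,t_2}{w}$ with $0<r_2<p_2/\tau_w$ and $0<r_2\le t_2$, which after passing back to a supremum recognizes the quasi-norm in $h^{p_2,t_2}(w)$.

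The main technical obstacle is verifying simultaneous compatibility of all the exponent constraints: $r_j$ must lie below $p_j/\tau_w$, below (or at) $t_j$, below $q$ (only for $j=1$), and below $1$; in addition, the number $N$ in the Coifman--Meyer decomposition must exceed $n(1/\min(p,t,q,1)+1/\min(1,p_j/\tau_w,t_j,q))$ so that the sum over $(a,b)\in\ent^{2n}$ converges after absorbing the factors $(1+|a|)^{\varepsilon+n/r_1}(1+|b|)^{\varepsilon+n/r_2}$. The hypothesis $s>\tau_{p,t,q}(w)$ is precisely what ensures that such $r_1$ exists and the Lorentz-Nikol'skij representation applies. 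The diagonal endpoint inequality (the second displayed estimate in the theorem) follows by the same argument taking $p_1=\infty,t_1=\infty$ (so $p_2=p,t_2=t$), and using that $L^\infty=L^\infty(w)$ replaces $h^{\infty,\infty}(w)$ on the relevant side, together with the symmetric counterpart.
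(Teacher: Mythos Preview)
Your proposal is correct and follows essentially the same approach as the paper: the paper presents Theorem~\ref{thm:ICM:Lorentz} as a model result whose proof is obtained by rerunning the argument of Theorem~\ref{thm:ICM:TL:B} with the four weighted-Lebesgue ingredients (subadditivity of a power of the quasi-norm, H\"older, Fefferman--Stein, Nikol'skij representation) replaced by their Lorentz-space analogues, and you have reproduced precisely that outline with the correct exponent constraints on $r_1,r_2$ coming from \eqref{eq:FTLorentz}.
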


The lifting property $ \norm{f}{\itl{(p,t)}{s}{q}(w)}\simeq\norm{J^s f}{\itl{(p,t)}{0}{q}(w)}$ holds true for $s \in \re,$ $0<p<\infty$ and  $0<t, q\le \infty;$ this is implied by the Fefferman--Stein inequality \eqref{eq:FTLorentz} through a proof analogous to that of the  lifting property of the standard Triebel--Lizorkin spaces $\itl{p}{s}{q}.$
Then, under the assumptions of Theorem~\ref{thm:ICM:Lorentz}  we obtain, in particular,
\begin{equation*}
\norm{J^s (fg)}{\itl{(p,t)}{0}{q}(w)} \lesssim \norm{J^s f}{\itl{(p_1,t_1)}{0}{q}(w)} \norm{g}{h^{p_2,t_2}(w)} +  \norm{f}{h^{p_1,t_1}(w)}   \norm{J^sg}{\itl{(p_2,t_2)}{0}{q}(w)};
\end{equation*}
\begin{equation*}
\norm{J^s(fg)}{\itlw{(p,t)}{0}{q}{w}} \lesssim \norm{J^sf}{\itlw{(p,t)}{0}{q}{w}} \norm{g}{L^\infty} +  \norm{f}{L^\infty}   \norm{J^s g}{\itlw{(p,t)}{0}{q}{w}}.
\end{equation*}
These last two estimates supplement the results in \cite[Theorem 6.1]{MR3513582}, where related Leibniz-type rules in Lorentz spaces were obtained.

 \subsection{Leibniz-type rules in the settings of Morrey-based Triebel--Lizorkin and Besov spaces.}

Given $0<p\le t<\infty$ and $w\in A_\infty,$  we denote by $\mow{p}{t}{w}$ the weighted Morrey space consisting of functions $f\in L^p_{\text{loc}}(\rn)$  such that
\[
\|f\|_{\mow{p}{t}{w}}=\sup_{B\subset \rn}w(B)^{\frac{1}{t}-\frac{1}{p}}\left( \int_B\abs{f(x)}^pw(x)\dx\right)^\frac{1}{p}<\infty,
\]
where the supremum is taken over all Euclidean balls $B$ contained in $\rn;$
 it easily  follows that $\mow{p}{p}{w}=\lebw{p}{w}.$  We refer the reader to the work Rosenthal--Schmeisser~\cite{MR3538648} and the  references it contains for more details about weighted Morrey spaces. The corresponding weighted inhomogeneous Triebel--Lizorkin spaces and inhomogeneous Besov spaces  are denoted by $\itlw{[p,t]}{s}{q}{w}$ and   $\ibesw{[p,t]}{s}{q}{w},$ respectively. These Morrey-based Triebel--Lizorkin and Besov  spaces are independent of the choice of $\varphi$ and $\psi$ given in  Section~\ref{sec:spaces} and are quasi-Banach spaces that contain $\sw$ (see the works Kozono--Yamazaki~\cite{MR1274547}, Mazzucato~\cite{MR1946395}, Izuki et al.~\cite{MR2792058} and the references they cotain). The corresponding local Hardy spaces are denoted by $h^t_p(w).$

Property \eqref{item:first} for $\itlw{[p,t]}{s}{q}{w}$ and   $\ibesw{[p,t]}{s}{q}{w}$ is easily verified with  $r=\min(p,q,1)$ using that $\norm{\abs{f}^s}{\mow{p}{t}{w}}=\norm{f}{\mow{sp}{st}{w}}^s$ for $0<s<\infty.$ Regarding property \eqref{item:second}, we have that if $0<p\le t<\infty,$ $0<p_1\le t_1<\infty$ and $0<p_2\le t_2<\infty$ are such that $\hcline$ and $1/t=1/t_1+1/t_2,$ then  
\begin{equation*}
\norm{fg}{\mow{p}{t}{w}}\le \norm{f}{\mow{p_1}{t_1}{w}} \norm{g}{\mow{p_2}{t_2}{w}};
\end{equation*}
also, if $0<p\le t<\infty,$ $0<p_1,p_2<\infty$ are such that $\hcline$ and $w=w_1^{p/p_1}w_2^{p/p_2}$ for weights $w_1$ and $w_2,$ then 
\begin{equation*}
\norm{fg}{\mow{p}{t}{w}}\le \norm{f}{\mow{p_1}{\frac{p_1t}{p}}{w_1}} \norm{g}{\mow{p_2}{\frac{p_2t}{p}}{w_2}}.
\end{equation*}
Both inequalities are  straightforward consequences of H\"older's inequality for weighted Lebesgue spaces. As for property \eqref{item:third}, it holds that  if $0<p\le t<\infty,$  $0<q\le \infty$ and  $0<r<\min(p/\tau_w,q),$ then
\begin{equation}\label{eq:FTMorrey}
\norm{\left(\sum_{j\in\ent} \abs{\M_r(f_j)}^q\right)^{\frac{1}{q}}}{\mow{p}{t}{w}}\lesssim \norm{\left(\sum_{j\in\na_0} \abs{f_j}^q\right)^{\frac{1}{q}}}{\mow{p}{t}{w}}\quad \forall \{f_j\}_{j\in \na_0}\in \mow{p}{t}{w}(\ell^q);  
\end{equation}
in particular, if $0<p\le t<\infty$ and $0<r<p/\tau_w,$ it holds that 
\begin{equation*}
\norm{\M_r(f)}{\mow{p}{t}{w}}\lesssim \norm{f}{\mow{p}{t}{w}}\quad \forall f\in \mow{p}{t}{w}. 
\end{equation*}
When $r=1,$ $1<p\le t<\infty$   and $1<q\le \infty,$  the vector-valued inequality follows from extrapolation and the weighted Fefferman--Stein inequality for weighted Lebesgue spaces (see \cite[Theorem 5.3]{MR3538648} for the corresponding extrapolation theorem). The rest of the cases follow from the latter and the fact that $\norm{\abs{f}^s}{\mow{p}{t}{w}}=\norm{f}{\mow{sp}{st}{w}}^s$ for any $0<s<\infty.$  The Nikol'skij representation for $\itlw{[p,t]}{s}{q}{w}$ and $\ibesw{[p,t]}{s}{q}{w}$ with $w\in A_\infty$  (property~\eqref{item:last})  has an analogous statement to that of Theorem~\ref{thm:Nikolskij:weighted} with parameters $0<p\le t<\infty,$ $0<q\le \infty$  and $\lebw{p}{w}$ replaced by $\mow{p}{t}{w}.$ In the setting of $\itlw{[p,t]}{s}{q}{w},$ the convergence of the series is in $\swp$ for any choice of parameters; in the case of $\ibesw{[p,t]}{s}{q}{w},$ the convergence of the series holds in $\swp$ if $q=\infty$ and in $\ibesw{[p,t]}{s}{q}{w}$ otherwise. A similar proof to that of Theorem~\ref{thm:Nikolskij:weighted} applies (see  also Remark~\ref{re:app}).

Finally, we next present a counterpart  of Theorem~\ref{thm:ICM:TL:B} in the context of  $\itlw{[p,t]}{s}{q}{w}.$ 

\begin{theorem}\label{thm:ICM:Morrey}  For $m \in \re,$ let $\sigma(\xi,\eta),$ $\xi,\eta\in\rn,$ be an inhomogeneous Coifman-Meyer multiplier of order $m.$ 
\begin{enumerate}[(a)]
\item If $w\in A_\infty,$ $0<p\le t<\infty,$ $0<p_1\le t_1<\infty$ and $0<p_2\le t_2<\infty$  are  such that $\hcline$ and $1/t=1/t_1+1/t_2,$  $0 < q \le \infty$  and  $s > \tau_{p,q}(w),$  it holds that
\begin{equation*}
\norm{T_\sigma(f,g)}{\itlw{[p,t]}{s}{q}{w}} \lesssim \norm{f}{\itlw{[p_1,t_1]}{s+m}{q}{w}} \norm{g}{h_{p_2}^{t_2}(w)} +  \norm{f}{h_{p_1}^{t_1}(w)}   \norm{g}{\itlw{[p_2,t_2]}{s+m}{q}{w}} \quad \forall f, g \in \sw.
\end{equation*}
Different pairs of $p_1,p_2$ and $t_1, t_2$ can be used on the right-hand side of the inequality above. 
Moreover, if $w\in A_\infty,$ $0<p\le t<\infty,$ $0<q\le \infty$   and $s > \tau_{p,q}(w),$  it holds that
\begin{equation*}
\norm{T_\sigma(f,g)}{\itlw{[p,t]}{s}{q}{w}} \lesssim \norm{f}{\itlw{[p,t]}{s+m}{q}{w}} \norm{g}{L^\infty} +  \norm{f}{L^\infty}   \norm{g}{\itlw{[p,t]}{s+m}{q}{w}} \quad \forall f, g \in \sw.
\end{equation*}
\item If $w_1,w_2\in A_{\infty},$ $w:=w_1^{p/p_1}w_2^{p/p_2},$ $0<p\le t<\infty,$ $0<p_1,p_2<\infty$ are such that $\hcline$ and $s> \tau_{p,q}(w),$ it holds that
\begin{equation*}
\norm{T_\sigma(f,g)}{\itlw{[p,t]}{s}{q}{w}} \lesssim \norm{f}{\itlw{[p_1,p_1t/p]}{s+m}{q}{w_1}} \norm{g}{h_{p_2}^{p_2t/p}(w_2)} +  \norm{f}{h_{p_1}^{p_1t/p}(w_1)}   \norm{g}{\itlw{[p_2,p_2t/p]}{s+m}{q}{w_2}} \quad \forall f, g \in \sw.
\end{equation*}
\end{enumerate}
\end{theorem}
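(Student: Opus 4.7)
The plan is to mimic, almost verbatim, the proof of Theorem~\ref{thm:ICM:TL:B} (equivalently, the proof of Theorem~\ref{thm:CM:TL:B} written out in Section~\ref{sec:mainproofs}) but now carried out inside the scale $\itlw{[p,t]}{s}{q}{w}$. The four ingredients invoked there, namely quasi-norm subadditivity, a H\"older inequality, a vector-valued maximal inequality, and a Nikol'skij representation, all have the Morrey counterparts listed just before the statement of the theorem; once these are in hand, the argument essentially writes itself.

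First I would use the Coifman--Meyer decomposition from Section~\ref{sec:decomp} (in its inhomogeneous form), $T_\sigma = T_\sigma^1 + T_\sigma^2$, and by symmetry reduce to estimating
\[
T^{a,b}(f,g) := \sum_{j\in \naz} \C_j(a,b)\,(\Delta_j^{\tau_a\Psi} f)\,(S_j^{\tau_b\Phi} g),
\qquad a,b\in \ent^n,
\]
with the necessary gain in $(1+|a|)^{\varepsilon + n/r_1}(1+|b|)^{\varepsilon + n/r_2}$ that is summable against $(1+|a|^2+|b|^2)^{-N}$ once $N$ is taken large (in accordance with Remarks~\ref{re:numderiv1} and~\ref{re:numderiv2}); quasi-norm subadditivity of $\|\cdot\|_{\itlw{[p,t]}{s}{q}{w}}^r$ with $r=\min(p,q,1)$ reduces the proof to this. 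Applying the Morrey-based Nikol'skij representation (the Triebel--Lizorkin version, valid since $s>\tau_{p,q}(w)$) followed by the bound \eqref{eq:cjbound} on $\C_j(a,b)$ gives
\[
\|T^{a,b}(f,g)\|_{\itlw{[p,t]}{s}{q}{w}} \lesssim 2^0 \Bigl\| \sup_{j\in\naz}|S_j^{\tau_b\Phi}g|\,\Bigl(\sum_{j\in\naz} 2^{(s+m)qj}|\Delta_j^{\tau_a\Psi}f|^q\Bigr)^{1/q}\Bigr\|_{\mow{p}{t}{w}}.
\]
Then I would split this product using the appropriate H\"older inequality: for part (a), the single-weight Morrey H\"older with exponents $(p_1,t_1)$ and $(p_2,t_2)$, and for part (b), the two-weight version
\[
\|fg\|_{\mow{p}{t}{w}}\le \|f\|_{\mow{p_1}{p_1t/p}{w_1}}\|g\|_{\mow{p_2}{p_2t/p}{w_2}}
\]
recalled right before the theorem, which is precisely what forces the Morrey indices $p_1 t/p$ and $p_2 t/p$ in the statement.

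Finally, I would convert each factor back into an intrinsic norm. For the $f$-factor, Lemma~\ref{lem:pointineq} with a choice $0<r_1<\min(p_1/\tau_{w_1},q,1)$ (and analogously $0<r_2<\min(p_2/\tau_{w_2},1)$ for the $g$-factor) replaces $\Delta_j^{\tau_a\Psi}f$ by $\M_{r_1}(\Delta_j^\psi f)$ up to the factor $(1+|a|)^{\varepsilon+n/r_1}$, and the vector-valued Fefferman--Stein inequality \eqref{eq:FTMorrey} in weighted Morrey spaces then removes the maximal operator, producing $\|f\|_{\itlw{[p_1,t_1]}{s+m}{q}{w_1}}$ (or the corresponding $w_1$-norm with index $p_1t/p$ in part (b)). The $g$-factor is handled by the same lemma together with the scalar maximal bound and the identification $h_p^t(w)\simeq \itlw{[p,t]}{0}{2}{w}$; this gives the $h^{p_2}_{p_2}(w_2)$ (or $h^{p_2 t/p}_{p_2}(w_2)$) norm. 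Collecting the $(1+|a|)$ and $(1+|b|)$ factors, summing over $a,b\in\ent^n$ against $(1+|a|^2+|b|^2)^{-N}$ (take $N$ as in Remark~\ref{re:numderiv2}), and adding the symmetric term coming from $T_\sigma^2$ yields both inequalities. The endpoint case where the Hardy-type factor becomes $\|g\|_{L^\infty}$ follows the same scheme with $w_1=w_2$ and an immediate control $\sup_j|S_j^{\tau_b\Phi}g|\lesssim (1+|b|)^{n}\|g\|_{L^\infty}$.

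The main obstacle I anticipate is bookkeeping rather than a conceptual difficulty: one must keep track of which of the two Morrey H\"older inequalities is used (single-weight in (a) versus two-weight in (b)) and the resulting Morrey second-index ($t_j$ vs.\ $p_j t/p$), and one must verify that the exponent $r_j$ chosen in Lemma~\ref{lem:pointineq} is simultaneously admissible in the Fefferman--Stein inequality \eqref{eq:FTMorrey} at the correct Morrey pair; this is precisely where the hypothesis $s>\tau_{p,q}(w)$ enters. Everything else parallels the weighted Lebesgue proof of Theorem~\ref{thm:CM:TL:B} word for word.
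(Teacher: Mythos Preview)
Your proposal is correct and follows exactly the approach the paper intends: Section~\ref{sec:more} does not give a separate proof of Theorem~\ref{thm:ICM:Morrey} but instead lists the four Morrey-space ingredients (quasi-norm subadditivity with $r=\min(p,q,1)$, the two H\"older inequalities, the Fefferman--Stein inequality \eqref{eq:FTMorrey}, and the Nikol'skij representation) and leaves the reader to replay the proof of Theorem~\ref{thm:CM:TL:B}/\ref{thm:ICM:TL:B}, which is precisely what you have done. Your observation that parts (a) and (b) differ only in which of the two Morrey H\"older inequalities is invoked, and that this choice dictates the second Morrey index ($t_j$ versus $p_j t/p$), is exactly the point.
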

Applying the lifting property $ \norm{f}{\itl{[p,t]}{s}{q}(w)}\simeq\norm{J^s f}{\itl{[p,t]}{0}{q}(w)},$ valid for $s \in \re,$ $0<p\le t<\infty$ and  $0<q\le \infty,$ and under the assumptions  of Theorem~\ref{thm:ICM:Morrey} we obtain, in particular,
\begin{equation}
\norm{J^s(fg)}{\itlw{[p,t]}{0}{q}{w}} \lesssim \norm{J^sf}{\itlw{[p_1,t_1]}{0}{q}{w}} \norm{g}{h_{p_2}^{t_2}(w)} +  \norm{f}{h_{p_1}^{t_1}(w)}   \norm{J^sg}{\itlw{[p_2,t_2]}{0}{q}{w}};\label{eq:ktmorrey}
\end{equation}
\begin{equation*}
\norm{J^s(fg)}{\itlw{[p,t]}{0}{q}{w}} \lesssim \norm{J^sf}{\itlw{[p,t]}{0}{q}{w}} \norm{g}{L^\infty} +  \norm{f}{L^\infty}   \norm{J^sg}{\itlw{[p,t]}{0}{q}{w}};
\end{equation*}
\begin{equation*}
\norm{J^s(fg)}{\itlw{[p,t]}{0}{q}{w}} \lesssim \norm{J^sf}{\itlw{[p_1,p_1t/p]}{0}{q}{w_1}} \norm{g}{h_{p_2}^{p_2t/p}(w_2)} +  \norm{f}{h_{p_1}^{p_1t/p}(w_1)}   \norm{J^sg}{\itlw{[p_2,p_2t/p]}{0}{q}{w_2}}.
\end{equation*}
We refer the reader to \cite[Theorem 6.3]{MR3513582} for unweighted estimates in Morrey spaces in the spirit of  \eqref{eq:ktmorrey}.

\subsection{Leibniz-type rules in the settings of variable-exponent Triebel--Lizorkin and Besov spaces.}
Let  $\P_0$ be the collection of  measurable functions $\pp : \rn \rightarrow (0,\infty)$  such that
\begin{equation*}
p_- := \essinf_{x\in \rn} p(x)>0 \quad\text{ and } \quad p_+ := \esssup_{x\in \rn} p(x)<\infty.
\end{equation*}
For $\pp\in\P_0,$   the variable-exponent Lebesgue space $\Lp$
consists of all measurable functions $f$ such that 
\begin{equation*}
\norm{f}{L^{p(\cdot)}}:=\inf\left\{\lambda>0: \int_{\rn} \abs{\frac{f(x)}{\lambda}}^{p(x)}\,dx\le 1\right\}<\infty;
\end{equation*}
such quasi-norm turns $\Lp$ into  a quasi-Banach  space (Banach space if $p_-\ge1$). We note that if $\pp=p$ is constant then  $\Lp\simeq L^p$ with equality of norms and that
\begin{equation}\label{eq:power}
\norm{\abs{f}^t}{\Lp}=\norm{f}{L^{t\pp}}^t\quad \forall\, t>0.
\end{equation} 
We refer the reader to the books Cruz-Uribe--Fiorenza~\cite{MR3026953} and Diening et al.~\cite{MR2790542} for more information about variable-exponent Lebesgue spaces. 

Let $\B$ be the family of all $\pp \in \P_0$ such that $\M,$  the Hardy--Littlewood maximal operator, is bounded from $\Lp$ to $\Lp.$ A necessary condition for $\pp\in \B$ is  $p_->1;$   sufficient conditions for $\pp \in \B$ include  log-H\"older continuity assumptions.
Property \eqref{eq:power} and Jensen's inequality imply that if $\pp\in \P_0$ and $0<\tau_0<\infty$ is such that $\pp/\tau_0\in\B$ then $\pp/\tau\in\B$ for $0<\tau<\tau_0.$ We then define
\begin{equation*}
\tau_{\pp}=\sup\{\tau>0:\fr{\pp}{\tau}\in \B\},\quad \pp\in \P_0^*,
\end{equation*}
where $\P_0^*$ denotes the class of variable exponents in $\P_0$ such that $\pp/\tau_0\in\B$ for some $\tau_0>0.$ Note that $\tau_{\pp}\le p_-.$

Given $s\in\re,$ $0<q\le \infty$ and $\pp\in\P_0,$ the corresponding inhomogeneous Triebel-Lizorkin and Besov spaces are denoted by $\itl{\pp}{s}{q}$  and  $\ibes{\pp}{s}{q},$
respectively. If $\pp\in\P_0^*,$ these spaces are independent of the functions $\psi$ and $\varphi$ given in  Section~\ref{sec:spaces} (see  Xu~\cite{MR2431378}), contain $\sw$ and are quasi-Banach spaces. If $\pp\in\B$ and $s>0,$ $\itl{\pp}{s}{2}$ coincides with the variable-exponent  Sobolev space $W^{s,\pp}$ (see Gurka et al.~\cite{MR2339558} and Xu~\cite{MR2449626}). More general versions of variable-exponent  Triebel--Lizorkin and Besov spaces, where $s$ and  $q$ are also allowed to be functions, were introduced in  Diening at al.~\cite{MR2498558} and Almeida--H\"ast\"o~\cite{MR2566313}, respectively. 
The local Hardy space with variable exponent $\pp\in \P_0,$  denoted $h^{\pp},$  is defined analogously to $h^p(w)$ with the quasi-norm in $\lebw{p}{w}$ replaced by the quasi-norm in $\Lp.$

We next consider properties \eqref{item:first}-\eqref{item:last} in the variable-exponent setting. Given $\pp\in \P_0,$ $0<q\le\infty$ and $s\in \re,$  property \eqref{item:first} for $\itl{\pp}{s}{q}$ and $\ibes{\pp}{s}{q}$ with $r=\min(p_-,q,1)$ follows right away using \eqref{eq:power}.  Property \eqref{item:second} is given by the following version of H\"older's inequality  in the context of variable-exponent Lebesgue spaces: If $\ppo,\, \ppt,\, \pp \in \P_0$ are such that
$ {1}/{\pp} = {1}/{\ppo} + {1}/{\ppt}$ then
\begin{equation*}
 \|fg\|_{L^\pp} \lesssim \|f\|_{L^\ppo}\|g\|_{L^\ppt}\quad \forall f\in L^\ppo, g\in L^\ppt.
 \end{equation*}
 For a proof with exponents in $\P_0$ such that $p_-\ge1$ see, for instance,  \cite[Corollary 2.28]{MR3026953}; the general case with exponents in $\P_0$ follows from the latter and \eqref{eq:power}. Regarding property \eqref{item:third} for  variable-exponent Lebesgue spaces, the following version of the Fefferman-Stein inequality follows from \cite[Section 5.6.8]{MR3026953} and \eqref{eq:power}: If $\pp\in\P_0^*,$  $0<q\le \infty$  and $0<r<\min(\tau_{\pp},q)$ then
\begin{equation*}
\norm{\left(\sum_{j\in\ent} \abs{\M_r(f_j)}^q\right)^{\frac{1}{q}}}{\Lp}\lesssim \norm{\left(\sum_{j\in\na_0} \abs{f_j}^q\right)^{\frac{1}{q}}}{\Lp}\quad \forall \{f_j\}_{j\in \na_0}\in \Lp(\ell^q);
\end{equation*}
in particular, if $0<r<\tau_{\pp}$ it holds that 
\begin{equation*}
\norm{\M_r(f)}{\Lp}\lesssim \norm{f}{\Lp}\quad \forall f\in \Lp.
\end{equation*}
Finally, the following version of the Nikol'skij representation for $\itl{\pp}{s}{q}$ and $\ibes{\pp}{s}{q}$ (property \eqref{item:last}), can be proved along the lines of the proof of Theorem~\ref{thm:Nikolskij:weighted} (see also Remark~\ref{re:app}):
\begin{theorem}\label{thm:Nikolskij:variable} For $\A> 0,$ let $\{u_j\}_{j \in \ent} \subset \mathcal{S}'(\rn)$ be a sequence of tempered distributions such that $\supp(\widehat{u_j}) \subset B(0, \A\, 2^j ) $ for all $j \in \ent.$
Let $ \pp\in \P_0^*$, $0 < q \le \infty$ and $s > n(1/\min(\tau_{\pp},q,1)-1)$. If $\norm{\{2^{js} u_j\}_{j\in\ent}}{\Lp(\ell^{q})} < \infty$, then the series $\sum_{j \in \ent} u_j$ converges in $\itl{\pp}{s}{q}$ (in $\swp$ if $q=\infty$) and 
\begin{equation*}
\norm{\sum_{j \in \na_0} u_j}{\itl{\pp}{s}{q}} \lesssim  \norm{\{2^{js} u_j\}_{j\in\na_0}}{L^\pp(\ell^{q})},
\end{equation*}
where the implicit constant depends only on $n,$ $\A,$ $s,$ $\pp$ and  $q.$  An analogous statement holds true for $\ibes{\pp}{s}{q}$ with $s> n(1/\min(\tau_{\pp},1)-1)$
\end{theorem}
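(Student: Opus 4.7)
The plan is to adapt the proof of Theorem~\ref{thm:Nikolskij:weighted} sketched in Appendix~\ref{sec:appendix} to the variable-exponent setting, substituting throughout the variable-exponent Fefferman--Stein inequality and the $\Lp$-boundedness of $M_r$ recalled in Section~\ref{sec:more} for their weighted analogues. I describe the Triebel--Lizorkin case; the Besov case is strictly parallel, and since the $\ell^q$-summation is performed outside the $\Lp$-norm it needs only the scalar boundedness of $M_r$ on $\Lp$, which explains why $q$ does not appear in the corresponding threshold.

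First, fix $\psi$ and $\varphi$ as in Section~\ref{sec:spaces}. The Fourier-support hypothesis $\supp(\widehat{u_j})\subset B(0,\A 2^j)$ together with the annular support of $\widehat{\psi}$ yields an integer $c_0=c_0(\A)$ such that $\Do{\psi}{k}u_j\equiv 0$ whenever $j<k-c_0$; hence
$$
\Do{\psi}{k}\Bigl(\sum_{j\in\ent}u_j\Bigr)=\sum_{j\ge k-c_0}\Do{\psi}{k}u_j,
$$
and an analogous reduction handles the low-frequency piece $S_0^\varphi(\sum_j u_j)$. Next, I pick $0<r<\min(\tau_\pp,q,1)$, which is possible because $\pp\in\P_0^*$. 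A Peetre-type pointwise majorant (in the spirit of Lemma~\ref{lem:pointineq} and the estimates in Appendix~\ref{sec:appendix}) produced by the band-limit of $u_j$ delivers a bound of the form
$$
|\Do{\psi}{k}u_j(x)|\lesssim 2^{(j-k)_+\alpha_r}\,M_r(u_j)(x),
$$
where $\alpha_r$ depends on $n$ and $r$ and shrinks to $n(1/\min(\tau_\pp,q,1)-1)$ as $r\uparrow\min(\tau_\pp,q,1)$. Under the hypothesis $s>n(1/\min(\tau_\pp,q,1)-1)$ I choose $r$ so that $s>\alpha_r$; Young's convolution inequality (for $q\ge 1$) or the $q$-triangle inequality (for $0<q<1$) then absorbs the sum over $j\ge k-c_0$ and yields the pointwise bound
$$
\Bigl(\sum_{k\in\ent}\bigl(2^{ks}\bigl|\Do{\psi}{k}(\sum_j u_j)\bigr|\bigr)^q\Bigr)^{1/q}\lesssim\Bigl(\sum_{j\in\ent}\bigl(2^{js}M_r(u_j)\bigr)^q\Bigr)^{1/q}.
$$
Taking $\Lp$-norms and invoking the variable-exponent Fefferman--Stein inequality, which holds precisely for $0<r<\min(\tau_\pp,q)$, bounds the right-hand side by $\|\{2^{js}u_j\}\|_{\Lp(\ell^q)}$. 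Convergence of $\sum_j u_j$ in $\itl{\pp}{s}{q}$ (or in $\swp$ when $q=\infty$) follows by applying the same estimates to the tails of the series.

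The main difficulty will be the calibration of the Peetre majorant so that the exponent $\alpha_r$ generated by the scale mismatch between $\psi_k$ and $u_j$ (for $j>k$) matches the threshold $n(1/\min(\tau_\pp,q,1)-1)$ at the endpoint $r\to\min(\tau_\pp,q,1)$; this is the variable-exponent analogue of the sharp Plancherel--Polya analysis underpinning the proof of Theorem~\ref{thm:Nikolskij:weighted}, and the fact that the Fefferman--Stein inequality requires only $r<\min(\tau_\pp,q)$ whereas the pointwise estimate is phrased in terms of $\min(\tau_\pp,q,1)$ must be reconciled so that the two constraints on $r$ can be satisfied simultaneously. Beyond this, no essentially new technical device past the variable-exponent tools summarized in Section~\ref{sec:more} is required.
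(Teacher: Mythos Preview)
Your proposal is correct and follows the same route as the paper: the paper simply remarks that the proof of Theorem~\ref{thm:Nikolskij:variable} ``can be proved along the lines of the proof of Theorem~\ref{thm:Nikolskij:weighted}'' using the variable-exponent Fefferman--Stein inequality in place of the weighted one, and Remark~\ref{re:app} handles the convergence issues exactly as you indicate. Your ``main difficulty'' is not really one: Lemma~\ref{coro:2:11} gives the explicit exponent $\alpha_r=n(1/r-1)$ (valid for $0<r\le 1$), so choosing $0<r<\min(\tau_{\pp},q,1)$ with $s>n(1/r-1)$---possible precisely because $s>n(1/\min(\tau_{\pp},q,1)-1)$---simultaneously satisfies the pointwise-estimate constraint $r\le 1$ and the Fefferman--Stein constraint $r<\min(\tau_{\pp},q)$, with no further calibration needed.
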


We next state a version of Theorem~\ref{thm:ICM:TL:B} for variable-exponent Triebel-Lizorkin spaces as a model result.
\begin{theorem} \label{thm:ICM:variable} For $m \in \re,$ let $\sigma(\xi,\eta),$ $\xi,\eta\in\rn,$ be an inhomogeneous Coifman-Meyer multiplier of order $m.$ If  $\pp,\ppo,\ppt\in \P_0^*$  are such that $ {1}/{\pp} = {1}/{\ppo} + {1}/{\ppt},$   $0 < q \le \infty$ and  
$s > n(1/\min(\tau_{\pp},q,1)-1),$  it holds that
\begin{equation*}
\norm{T_\sigma(f,g)}{\itl{\pp}{s}{q}} \lesssim \norm{f}{\itl{\ppo}{s+m}{q}} \norm{g}{h^{\ppt}} +  \norm{f}{h^{\ppo}}   \norm{g}{\itl{\ppt}{s+m}{q}} \quad \forall f, g \in \sw.
\end{equation*}
Moreover, if $\pp\in \P_0^*,$     $0 < q \le \infty$ and  
$s > n(1/\min(\tau_{\pp},q,1)-1),$  it holds that
\begin{equation*}
\norm{T_\sigma(f,g)}{\itl{\pp}{s}{q}} \lesssim \norm{f}{\itl{\pp}{s+m}{q}} \norm{g}{L^\infty} +  \norm{f}{L^\infty}   \norm{g}{\itl{\pp}{s+m}{q}} \quad \forall f, g \in \sw.
\end{equation*}
\end{theorem}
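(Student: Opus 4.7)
The plan is to mirror the proof of Theorem~\ref{thm:ICM:TL:B}, using the four ingredients \eqref{item:first}--\eqref{item:last} now in their variable-exponent incarnations, all of which have been recorded in Section~\ref{sec:more}. First I would invoke the Coifman--Meyer decomposition $T_\sigma = T_\sigma^1 + T_\sigma^2$ from Section~\ref{sec:decomp} (in its inhomogeneous form, with summation over $j \in \naz$ and $\Do{\tau_a\Psi}{0}$ replaced by $\So{\tau_a\Phi}{0}$). By symmetry it suffices to treat $T_\sigma^1$ and prove the bound involving only $\norm{f}{\itl{\ppo}{s+m}{q}}\norm{g}{h^\ppt}$. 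The subadditivity from \eqref{item:first}, available with exponent $r^\ast := \min(p_-, q, 1)$, further reduces the task to establishing, for a given $\varepsilon > 0$, the existence of $r_1, r_2 \in (0,1]$ such that uniformly in $a, b \in \ent^n$,
\begin{equation*}
\norm{T^{a,b}(f,g)}{\itl{\pp}{s}{q}} \lesssim (1+\abs{a})^{\varepsilon + n/r_1} (1+\abs{b})^{\varepsilon + n/r_2} \norm{f}{\itl{\ppo}{s+m}{q}} \norm{g}{h^{\ppt}},
\end{equation*}
where $T^{a,b}(f,g) := \sum_{j \in \naz} \C_j(a,b)\, (\Do{\tau_a\Psi}{j}f)(\So{\tau_b\Phi}{j}g)$.

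The spectral support $\supp \mathcal{F}[\C_j(a,b)(\Do{\tau_a\Psi}{j}f)(\So{\tau_b\Phi}{j}g)] \subset B(0, A\cdot 2^j)$ together with $\abs{\C_j(a,b)} \lesssim 2^{jm}$ allow Theorem~\ref{thm:Nikolskij:variable} to yield
\begin{equation*}
\norm{T^{a,b}(f,g)}{\itl{\pp}{s}{q}} \lesssim \Bignorm{\sup_{j \in \naz} \abs{\So{\tau_b\Phi}{j}g} \Bigl(\sum_{j \in \naz} 2^{(s+m)qj}\abs{\Do{\tau_a\Psi}{j}f}^q\Bigr)^{1/q}}{L^\pp},
\end{equation*}
and variable-exponent H\"older's inequality corresponding to $1/\pp = 1/\ppo + 1/\ppt$ then splits this into the product
\begin{equation*}
\Bignorm{\Bigl(\sum_{j \in \naz} 2^{(s+m)qj}\abs{\Do{\tau_a\Psi}{j}f}^q\Bigr)^{1/q}}{L^\ppo} \cdot \norm{\sup_{j \in \naz} \abs{\So{\tau_b\Phi}{j}g}}{L^\ppt}.
\end{equation*}

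For the first factor, pick auxiliary $\psi, \varphi \in \sw$ as in Section~\ref{sec:spaces} with $\widehat{\psi} \equiv 1$ on $\supp(\widehat{\Psi})$ and $\widehat{\varphi} \equiv 1$ on $\supp(\widehat{\Phi})$, choose $r_1 \in (0, \min(1, \tau_{\ppo}, q))$, apply Lemma~\ref{lem:pointineq} pointwise, and then the variable-exponent Fefferman--Stein inequality from Section~\ref{sec:more} to recover $(1+\abs{a})^{\varepsilon + n/r_1}\norm{f}{\itl{\ppo}{s+m}{q}}$. For the second factor, with $r_2 \in (0, \min(1, \tau_{\ppt}))$, Lemma~\ref{lem:pointineq} produces $(1+\abs{b})^{\varepsilon+n/r_2}\M_{r_2}(\sup_j\abs{\So{\varphi}{j}g})$, whose $L^\ppt$ norm is controlled by $\norm{g}{h^\ppt}$ via the maximal characterization of $h^\ppt$ and the boundedness of $\M_{r_2}$ on $L^\ppt$.

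Summability of the $(a,b)$-series after raising to the power $r^\ast$ is secured by choosing $N$ in the Coifman--Meyer decomposition large enough, as in Remark~\ref{re:numderiv2}. The $L^\infty$ endpoint statement is obtained within the same framework by estimating $\sup_j \abs{\So{\tau_b\Phi}{j}g} \le \norm{\Phi}{L^1}\norm{g}{L^\infty}$ uniformly in $b$ and $j$ via Young's inequality, which eliminates the need for Lemma~\ref{lem:pointineq} on the $g$-side and reduces the $a$-side to its Triebel--Lizorkin analogue above. The principal obstacle is ensuring that the admissible ranges $(0, \min(1, \tau_{\ppo}, q))$ and $(0, \min(1, \tau_{\ppt}))$ are nonempty, which is precisely guaranteed by the hypothesis $\ppo, \ppt \in \P_0^\ast$.
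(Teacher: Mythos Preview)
Your proposal is correct and follows essentially the same approach the paper intends: the paper does not give an explicit proof of Theorem~\ref{thm:ICM:variable} but presents it as a model result obtained by running the proof of Theorem~\ref{thm:ICM:TL:B} with the variable-exponent versions of ingredients \eqref{item:first}--\eqref{item:last} recorded in Section~\ref{sec:more}, which is exactly what you outline. Your treatment of the $L^\infty$ endpoint via Young's inequality (noting $\norm{(\tau_b\Phi)_j}{L^1}=\norm{\Phi}{L^1}$ uniformly in $b,j$) is a clean alternative to invoking Lemma~\ref{lem:pointineq} on the $g$-side and is consistent with the paper's indication that ``the same proof applies'' for the endpoint case.
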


The lifting property $ \norm{f}{\itl{\pp}{s}{q}}\simeq\norm{J^s f}{\itl{\pp}{0}{q}}$ holds true for $s \in \re,$  $\pp\in \P_0^*$  and $0<q\le\infty;$ then, under the assumptions of Theorem~\ref{thm:ICM:variable}  we obtain, in particular,
\begin{equation*}
\norm{J^s (fg)}{\itl{\pp}{0}{q}} \lesssim \norm{J^s f}{\itl{\ppo}{0}{q}} \norm{g}{h^{\ppt}} +  \norm{f}{h^{\ppo}}   \norm{J^sg}{\itl{\ppt}{0}{q}};
\end{equation*}
\begin{equation*}
\norm{J^s(fg)}{\itl{\pp}{0}{q}} \lesssim \norm{J^sf}{\itl{\pp}{0}{q}} \norm{g}{L^\infty} +  \norm{f}{L^\infty}   \norm{J^sg}{\itl{\pp}{0}{q}}.
\end{equation*}
These last two estimates  extend some of the inequalities in \cite[Theorem 1.2]{MR3513582}, where Leibniz-type rules for the product of two functions were proved in variable-exponent Lebesgue spaces through the use of extrapolation techniques.

\appendix

\section{}\label{sec:appendix}

In this section we briefly sketch the proof of Theorem \ref{thm:Nikolskij:weighted} which follows the same ideas of an unweighted version for inhomogeneous spaces in \cite[Theorem 3.7]{MR837335}. We start by presenting useful lemmas and inequalities  and then proceed with the proof. 
\begin{lemma}[Particular case of Corollary 2.11 in \cite{MR837335}]\label{coro:2:11}
Suppose $0 < r \leq 1,$  $A >0$, $R \geq 1$ and $\ex > n/r$. If $\phi \in \sw$ and $f$ is such that  $\supp(\widehat{f}) \subset \{\xi\in\rn:\abs{\xi}\le AR\}$, it holds that
\begin{equation*}
|\phi * f(x)| \lesssim R^{n (\frac{1}{r}  -1)} A^{-n} \norm{(1 + |A \cdot|)^\ex \phi}{L^\infty}  \M_rf(x) \quad \forall x \in \rn,
\end{equation*}
where the implicit constant is independent of $A, R, \phi,$ and $f.$  
\end{lemma}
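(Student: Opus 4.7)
The plan is to exploit the interplay between the decay of $\phi$ and the natural scale $1/(AR)$ dictated by the band-limit of $f$. I will partition $\rn$ into half-open cubes $Q_\mu := \mu/(AR) + [0, 1/(AR))^n$, $\mu \in \ent^n$, and write $\phi = \sum_\mu \phi_\mu$ with $\phi_\mu := \phi\,\chi_{Q_\mu}$. Setting $K := \norm{(1+|A\cdot|)^\ex \phi}{L^\infty}$, so that $|\phi(z)| \le K(1+|Az|)^{-\ex}$, I note that uniformly for $z \in Q_\mu$ one has $(1+|Az|) \sim (1+|\mu|/R)$, whence $|\phi_\mu(z)| \lesssim K(1+|\mu|/R)^{-\ex}\chi_{Q_\mu}(z)$ and
\begin{equation*}
|\phi_\mu * f(x)| \lesssim K(1+|\mu|/R)^{-\ex}\int_{x-Q_\mu} |f(y)|\,dy.
\end{equation*}

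The heart of the proof is a localized Plancherel--Polya (Nikol'skij) inequality: because $\widehat{f}$ is supported in $B(0,AR)$ and each cube $x-Q_\mu$ has side length $1/(AR)$ matching the reciprocal band-limit, the classical band-limited $L^\infty$--$L^r$ estimate yields $\norm{f}{L^\infty(x-Q_\mu)} \lesssim (AR)^{n/r}\bigl(\int_{(x-Q_\mu)^*}|f|^r\,dy\bigr)^{1/r}$ for a fixed bounded enlargement $(x-Q_\mu)^*$, with constants depending only on $n$ and $r$. Multiplying by the cube volume $(AR)^{-n}$ produces $\int_{x-Q_\mu}|f| \lesssim (AR)^{n(1/r-1)}\bigl(\int_{(x-Q_\mu)^*}|f|^r\,dy\bigr)^{1/r}$. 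I then apply the subadditivity $(\sum_\mu a_\mu)^r \le \sum_\mu a_\mu^r$ valid for $0 < r \le 1$ to aggregate:
\begin{equation*}
|\phi * f(x)|^r \le \sum_\mu |\phi_\mu * f(x)|^r \lesssim K^r (AR)^{n(1-r)} \sum_\mu (1+|\mu|/R)^{-\ex r}\int_{(x-Q_\mu)^*}|f(y)|^r\,dy.
\end{equation*}

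Swapping sum and integral, I will exploit that each fixed $y$ lies in only $O(1)$ many enlargements $(x-Q_\mu)^*$, each with $|\mu|/R \sim A|x-y|$, giving $\sum_\mu (1+|\mu|/R)^{-\ex r}\chi_{(x-Q_\mu)^*}(y) \lesssim (1+A|x-y|)^{-\ex r}$. Under the hypothesis $\ex > n/r$ (equivalently $\ex r > n$), the weight $(1+A|\cdot|)^{-\ex r}$ is a radial decreasing $L^1$-function with norm $\sim A^{-n}$, so convolution against it is pointwise dominated by its $L^1$-norm times the Hardy--Littlewood maximal function: $\int |f(y)|^r (1+A|x-y|)^{-\ex r}\,dy \lesssim A^{-n}\M(|f|^r)(x) = A^{-n}(\M_r f(x))^r$. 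Combining produces $|\phi * f(x)|^r \lesssim K^r R^{n(1-r)} A^{-nr}(\M_r f(x))^r$, and the $r$-th root yields $|\phi * f(x)| \lesssim K R^{n(1/r-1)} A^{-n}\M_r f(x)$ as required. The main obstacle is the localized Plancherel--Polya step, which demands constants independent of $A$, $R$, and $\mu$; this is standard via the reproducing formula $f = f * \eta_{AR}$ with $\widehat{\eta_{AR}} \equiv 1$ on $B(0,AR)$ and Schwartz decay on $\eta$, and the assumption $R \ge 1$ enters by ensuring that the partition scale $1/(AR)$ does not exceed the decay scale $1/A$ of $\phi$, so that the combinatorial overlap count in the sum-integral swap remains uniformly bounded.
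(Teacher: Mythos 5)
There is no ``paper proof'' to match here: the paper quotes this lemma directly from Yamazaki's Corollary 2.11 (only remarking that $A^{-n/r}$ there should read $A^{-n}$), so your argument has to stand on its own, and it has a genuine gap at its central step. The cube decomposition, the comparability $(1+A|z|)\sim (1+|\mu|/R)$ on $Q_\mu$ (which is where $R\ge 1$ really enters), the $r$-subadditivity, the bounded-overlap bookkeeping in $\mu$, and the final convolution bound using $dr>n$ are all fine, and the exponents do come out to $R^{n(1/r-1)}A^{-n}$. What is not fine is the ``localized Plancherel--Polya'' inequality $\norm{f}{L^\infty(x-Q_\mu)}\lesssim (AR)^{n/r}\norm{f}{L^r((x-Q_\mu)^*)}$ with a \emph{fixed bounded enlargement} and constants depending only on $n$ and $r$. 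This is not the classical band-limited $L^\infty$--$L^r$ estimate: the classical Plancherel--Polya--Nikol'skij inequality compares global norms, and the pointwise or local versions one gets from the reproducing formula $f=f*\eta_{AR}$ unavoidably carry polynomially decaying tails over all of $\rn$ (this is precisely why the Peetre-type quantity $\sup_z |f(x-z)|(1+AR|z|)^{-n/r}$ appears in every standard treatment); moreover, for $0<r<1$ even the tailed pointwise estimate does not follow from the reproducing formula plus H\"older, but requires Peetre's bootstrapping argument. The sharply truncated local estimate you assert is a genuinely stronger statement (of Remez/local-Nikol'skij type for functions of exponential type) that you neither prove nor cite; and if one replaces it by the correct tailed version, the interchange of the $\mu$-sum with the integral must be redone with the tails included, at which point the cube decomposition no longer simplifies anything and one is back to the standard argument.

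For comparison, the standard route (essentially Yamazaki's) avoids localization altogether: writing $K:=\norm{(1+|A\cdot|)^{d}\phi}{L^\infty}$, one uses $|f(x-y)|\le \big(\sup_z |f(x-z)|(1+AR|z|)^{-n/r}\big)^{1-r}(1+AR|y|)^{n(1-r)/r}|f(x-y)|^r$, bounds the supremum by $\M_r f(x)$ via the Peetre--Fefferman--Stein maximal inequality for distributions with spectrum in $B(0,AR)$, and then estimates $|\phi*f(x)|\le K\int (1+A|y|)^{-d}|f(x-y)|\,dy$ by combining $(1+AR|y|)\le R(1+A|y|)$ with $d>n/r$, so that $\int (1+A|y|)^{n(1-r)/r-d}|f(x-y)|^r\,dy\lesssim A^{-n}\M(|f|^r)(x)$. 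Collecting terms gives exactly $K\,R^{n(1/r-1)}A^{-n}\M_r f(x)$, with $R\ge 1$ and $d>n/r$ entering precisely where expected. If you want to keep your cube-based scheme, you must either prove the truncated local inequality (a nontrivial task) or carry the tails through the $\mu$-summation, which reduces to the argument just described.
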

\begin{remark} \cite[Corollary 2.11]{MR837335} incorrectly states $A^{-n/r}$ instead of $ A^{-n}$. Also, it states $A \geq 1$, but the result is true for $A >0$ as stated in Lemma~\ref{coro:2:11}.
\end{remark}

The following lemma is a weighted version of   \cite[Corollary 2.12 (1)]{MR837335}. We include its brief proof for completeness.
\begin{lemma}\label{coro:2:12(1)}
Suppose $w\in A_\infty,$ $0 < p \leq \infty$, $A > 0$, $R \geq 1,$  and $\ex > b > n/ \min(1,p/\tau_w).$  If $\phi \in \sw$ and $f$ is such that $\supp(\widehat{f}) \subset \{\xi\in\rn:\abs{\xi}\le AR\}$, it holds that
\begin{equation*}
\norm{\phi * f}{\lebw{p}{w}} \lesssim R^{b - n} A^{-n} \norm{(1 + |A \cdot|^\ex) \phi}{L^\infty} \norm{f}{\lebw{p}{w}},
\end{equation*}
where the implicit constant is independent of $A,$ $R,$ $\phi$ and $f.$ 
\end{lemma}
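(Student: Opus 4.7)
The plan is to combine the pointwise majorization in Lemma~\ref{coro:2:11} with the boundedness of the maximal operator $\M_r$ on $\lebw{p}{w}$, using that $w\in A_{p/r}$ precisely when $r<p/\tau_w$ (in the nontrivial range $p<\infty$).

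First I would choose an auxiliary exponent $r$. The hypothesis $b>n/\min(1,p/\tau_w)$ is equivalent to $n/b<\min(1,p/\tau_w)$, so one can fix $r$ with $n/b<r$, $r\le 1$, and $r<p/\tau_w$ when $p<\infty$. This choice simultaneously guarantees the hypotheses of Lemma~\ref{coro:2:11} (namely $0<r\le 1$ and $b>n/r$) and, when $p<\infty$, the weighted strong-type bound $\norm{\M_r h}{\lebw{p}{w}}\lesssim\norm{h}{\lebw{p}{w}}$; when $p=\infty$, the bound $\M_r h\le\norm{h}{L^\infty}$ is pointwise obvious so any $r\in(n/b,1]$ works.

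Next I would apply Lemma~\ref{coro:2:11} with its free exponent (the $\ex$ in that lemma) set equal to $b$, obtaining
\begin{equation*}
|\phi*f(x)|\lesssim R^{n(1/r-1)}A^{-n}\,\norm{(1+|A\cdot|)^b\phi}{L^\infty}\,\M_r f(x)\quad\forall x\in\rn.
\end{equation*}
Since $R\ge 1$ and the condition $r>n/b$ yields $n(1/r-1)\le b-n$, the factor $R^{n(1/r-1)}$ may be replaced by the larger $R^{b-n}$. Taking $\lebw{p}{w}$ norms and invoking the maximal inequality for $\M_r$ selected above then gives
\begin{equation*}
\norm{\phi*f}{\lebw{p}{w}}\lesssim R^{b-n}A^{-n}\norm{(1+|A\cdot|)^b\phi}{L^\infty}\norm{f}{\lebw{p}{w}}.
\end{equation*}

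Finally, the slack $\ex>b$ is used only to convert the multiplicative weight $(1+|A\cdot|)^b$ into the additive form $1+|A\cdot|^\ex$ appearing in the statement: the elementary comparison $(1+|y|)^b\lesssim 1+|y|^\ex$ (verified in the regimes $|y|\le 1$ and $|y|>1$ separately, using $b\le\ex$) gives $\norm{(1+|A\cdot|)^b\phi}{L^\infty}\lesssim\norm{(1+|A\cdot|^\ex)\phi}{L^\infty}$, completing the argument. I do not foresee a substantive obstacle: the proof is a controlled reduction to Lemma~\ref{coro:2:11} plus the weighted maximal inequality, the only delicate point being the nonemptiness of the admissible interval for $r$, which is exactly the content of the strict inequality $\ex>b>n/\min(1,p/\tau_w)$ combined with the defining property of $\tau_w$.
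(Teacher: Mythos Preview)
Your proof is correct and follows essentially the same approach as the paper: apply the pointwise estimate of Lemma~\ref{coro:2:11} and then the weighted boundedness of $\M_r$. The only cosmetic differences are that the paper takes $r=n/b$ exactly (so that $R^{n(1/r-1)}=R^{b-n}$ on the nose) and invokes Lemma~\ref{coro:2:11} with exponent $\ex$ rather than $b$, whereas you take $r>n/b$ and use $R\ge 1$ to absorb the discrepancy, then convert $(1+|A\cdot|)^{b}$ to $1+|A\cdot|^{\ex}$ at the end; both routes are equally valid.
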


\begin{proof}  Set $r:=n/b < \min(1, p/\tau_w).$ The hypothesis $\ex > b$  means $ \ex  > n/r$ and Lemma~\ref{coro:2:11}  yields
$$
|\phi * f(x)| \lesssim R^{n (\frac{1}{r} -1)} A^{-n} \norm{(1 + |A \cdot|)^\ex \phi}{L^\infty}  \M_rf(x) \quad \forall x \in \rn.
$$
Since $r < p/\tau_w,$ we have   $\norm{\M_rf}{\lebw{p}{w}} \lesssim \norm{f}{\lebw{p}{w}}$ and therefore
$$
\norm{\phi * f}{\lebw{p}{w}} \lesssim R^{n (\frac{1}{r} -1)} A^{-n} \norm{(1 + |A \cdot|)^\ex \phi}{L^\infty} \norm{f}{\lebw{p}{w}};
$$
observing that $1/r -1 = (b -n)/n$, the desired estimate follows. 
\end{proof}
The following lemma is a modified version of \cite[Lemma 3.8]{MR837335}.
\begin{lemma}\label{eq:seriesineq} Let  $\tau < 0$, $\lambda \in \re$, $0 < q \leq \infty$, and $k_0 \in \ent$. Then, for any sequence $\{d_j\}_{j \in \ent} \subset [0, \infty)$ it holds that
\begin{equation*}
\norm{\left\{\sum_{k=k_0}^\infty 2^{\tau k} 2^{\lambda(j+k)}d_{j+k}\right\}_{j\in\ent}}{\ell^q} \lesssim \norm{\{2^{j \lambda} d_j\}_{j\in\ent}}{\ell^q},
\end{equation*}
where the implicit constant depends only on $k_0, \tau, \lambda$ and $q$.
\end{lemma}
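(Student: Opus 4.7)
The plan is to reduce the inequality to a standard Young-type convolution bound on the lattice $\ent$. First I would make the substitution $a_j := 2^{j\lambda} d_j$, so that $2^{\lambda(j+k)} d_{j+k} = a_{j+k}$ and the claim reduces to
\[
\norm{\left\{\sum_{k=k_0}^\infty 2^{\tau k} a_{j+k}\right\}_{j\in\ent}}{\ell^q} \lesssim \norm{\{a_j\}_{j\in\ent}}{\ell^q},
\]
with implicit constant depending only on $\tau$, $q$ and $k_0$. The right-hand side is the $\ell^q$ norm of the convolution of $\{a_j\}$ with the one-sided sequence $\{2^{\tau k}\mathbf{1}_{k\ge k_0}\}$, which is summable because $\tau<0$.

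The proof then splits naturally into three cases according to $q$. For $1\le q<\infty$ I would apply Minkowski's inequality in $\ell^q$ (with respect to the variable $j$) to move the $\ell^q$ norm inside the sum in $k$, use translation invariance of the counting measure to identify $\norm{\{a_{j+k}\}_j}{\ell^q}$ with $\norm{\{a_j\}_j}{\ell^q}$, and then bound the resulting geometric series $\sum_{k\ge k_0} 2^{\tau k}$ by a finite constant. For $0<q<1$ I would instead exploit the elementary subadditivity $(\sum_k b_k)^q \le \sum_k b_k^q$ (valid for nonnegative terms), apply it pointwise in $j$, interchange the order of summation in $j$ and $k$, and conclude using that $\sum_{k\ge k_0} 2^{\tau q k}<\infty$ since $\tau q<0$. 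The case $q=\infty$ is immediate by pulling $\sup_j a_{j+k}=\norm{\{a_j\}}{\ell^\infty}$ out of the sum in $k$.

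There is no real obstacle here; the argument is a textbook discrete Young inequality, and the only mildly delicate point is handling the quasi-norm case $q<1$ via the $q$-subadditivity trick rather than Minkowski. In every case the implicit constant is controlled by $\bigl(\sum_{k\ge k_0} 2^{\tau \min(q,1) k}\bigr)^{1/\min(q,1)}$, which depends only on $k_0$, $\tau$ and $q$, as claimed. The factor $\lambda$ disappears after the substitution and therefore does not enter the constant except through the reduction step.
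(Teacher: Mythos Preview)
Your proof is correct and follows essentially the same approach as the paper: a case split on $q$, using $q$-subadditivity for $0<q\le 1$ and a triangle-type inequality for $q\ge 1$, with the $q=\infty$ case handled directly. The only cosmetic differences are that you make the substitution $a_j=2^{j\lambda}d_j$ explicitly (the paper does not) and that for $1<q<\infty$ you invoke Minkowski's inequality whereas the paper applies H\"older's inequality after splitting $2^{\tau k}=2^{\tau k/2}\cdot 2^{\tau k/2}$; both arguments prove the same $\ell^1*\ell^q\to\ell^q$ bound.
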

\begin{proof}
Suppose first that $0 < q \leq 1$. Then,
\begin{align*}
& \norm{\left\{\sum_{k=k_0}^\infty 2^{\tau k} 2^{\lambda(j+k)}d_{j+k}\right\}_{j\in \ent}}{\ell^q} = \left[\sum_{j \in \ent} \left( \sum_{k=k_0}^\infty 2^{\tau k} 2^{\lambda(j+k)}d_{j+k} \right)^q \right]^\frac{1}{q}\\
& \quad\quad\quad\quad\quad \leq  \left[\sum_{j \in \ent}  \sum_{k=k_0}^\infty 2^{\tau q k} 2^{\lambda q (j+k)}d_{j+k}^q \right]^\frac{1}{q} =  \left[ \sum_{k=k_0}^\infty  2^{\tau q k} \sum_{j \in \ent} 2^{\lambda q (j+k)}d_{j+k}^q \right]^\frac{1}{q}\\
&  \quad\quad\quad \quad \quad = \left(  \sum_{k=k_0}^\infty   2^{\tau q k}     \right)^\frac{1}{q} \norm{\{2^{j \lambda} d_j\}_{j\in \ent}}{\ell^q} = C_{k_0, \tau, q} \norm{\{2^{j \lambda} d_j\}_{j\in\ent}}{\ell^q},
\end{align*}
where in the last equality we have used that $\tau < 0$. If $1 < q < \infty$ we use H\"older's inequality with $q$ and $q'$ to write
\begin{align*}
 \norm{\left\{\sum_{k=k_0}^\infty 2^{\tau k} 2^{\lambda(j+k)}d_{j+k}\right\}_{j\in\ent}}{\ell^q} 
& \le \left[\sum_{j \in \ent}   \left(\sum_{k=k_0}^\infty 2^{\tau k q/2} 2^{\lambda q(j+k)}d_{j+k}^q \right)     \left( \sum_{k=k_0}^\infty 2^{\tau kq'/2} \right)^{q/q'}   \right]^\frac{1}{q}\\
&    =C_{k_0, \tau, q} \norm{\{2^{j \lambda} d_j\}_{j\in\ent}}{\ell^q}.
\end{align*}

The case $q = \infty$ is straightforward.
\end{proof}

\begin{proof}[Proof of  Theorem~\ref{thm:Nikolskij:weighted}] 
We first prove Theorem~\ref{thm:Nikolskij:weighted} for finite families. We will do this in the homogeneous settings, with the proof in the inhomogeneous settings being similar. Suppose $\{u_j\}_{j\in\ent}$ is such that $u_j=0$ for all $j$ except those belonging to some finite subset of $\ent;$  this assumption allows us to avoid convergence issues since all the sums  considered will be finite.

For Part~\eqref{item:thh:Nikolskij:TL}, let $\A,$ $w,$ $p,$ $q$ and $s$ be as in the hypotheses. Fix $0<r<\min(1,p/\tau_w,q)$ such that  $s>n(1/r-1);$ note that the latter is possible since $s>\tau_{p,q}(w).$

Let $k_0 \in \ent$ be such that $2^{k_0 -1} < \A\leq 2^{k_0},$ then  
$$
\supp(\widehat{u_\ell}) \subset B(0, 2^\ell \A) \subset B(0, 2^{\ell +k_0}) \quad \forall \ell \in \ent.
$$
Define $u =\sum_{\ell \in \ent} u_\ell$ and let $\psi$ be as in the definition of $\tlw{p}{s}{q}{w}$ in Section~\ref{sec:spaces}. We have
\begin{equation}\label{psi*u}
\Delta^\psi_j u = \sum_{\ell \in \ent} \Delta^\psi_j u_\ell = \sum_{\ell = j - k_0}^\infty \Delta^\psi_j  u_\ell = \sum_{k = - k_0}^\infty \Delta^\psi_j u_{j+k}.
\end{equation}
We will use Lemma~\ref{coro:2:11} with $\phi(x) =  2^{jn} \psi(2^j x),$  $f = u_{j+k},$  $A= 2^{j  } >0,$ and $R= 2^{k + k_0}.$ (Notice that $\supp(\widehat{u_{j+k}}) \subset  B(0, 2^{j} 2^{k + k_0})$ and, since $k \geq -k_0$, we get $R \geq 1$.) Fixing  $\ex>n/r$ and applying Lemma~\ref{coro:2:11}, we get
\begin{align*}
|\Delta^\psi_j  u_{j+k}(x)| & \lesssim   2^{k_0n(\frac{1}{r} -1)} 2^{k n (\frac{1}{r} -1)}  2^{-jn} \norm{(1 + |2^j \cdot|)^\ex 2^{jn}\psi (2^j\cdot)}{L^\infty}  \M_r(u_{j+k})(x)\\
& \sim  2^{k n (\frac{1}{r} -1)} \left( \sup_{y \in \rn} (1 + |2^j y|)^\ex |\psi(2^j y)|\right) \M_r(u_{j+k})(x).
\end{align*}
 Hence,
\begin{align*}
2^{js}|\Delta^\psi_j  u_{j+k}(x)| & \lesssim 2^{k n (\frac{1}{r} -1-\frac{s}{n})} 2^{s(j+k)}\M_r(u_{j+k})(x),
\end{align*}
and then, recalling \eqref{psi*u}, 
\begin{align*}
2^{js} |\Delta^\psi_j  u(x)| & \lesssim  \sum_{k =  - k_0}^\infty 2^{k n (\frac{1}{r} -1-\frac{s}{n})} 2^{s(j+k)}\M_r(u_{j+k})(x).
\end{align*}
Since $1/r -1-s/n < 0$,   Lemma~\ref{eq:seriesineq}  yields
$$
\norm{\{ 2^{js} |\Delta^\psi_j  u|\}_{j\in\ent}}{L^p(w)(\ell^{q})} \lesssim \norm{\{2^{js} \M_ru_j\}_{j\in\ent}}{L^p(w)(\ell^{q})}
$$
with an implicit constant independent of $\{u_j\}_{j\in \ent}.$
Applying the weighted  Fefferman-Stein inequality  to the right-hand side of the last inequality leads to the desired estimate
\begin{equation*}
\norm{u}{\tlw{p}{s}{q}{w}} \lesssim \norm{\{2^{js}u_j\}_{j\in\ent}}{L^p(\ell^{q})}.
\end{equation*}

For Part~\eqref{item:thh:Nikolskij:B}, let $\A,$ $w,$ $p,$ $q$ and $s$ be as in the hypotheses and $k_0$ be as above. 
Consider $\Delta^\psi_j  u_{j+k}$  in  \eqref{psi*u} and  apply  Lemma~\ref{coro:2:12(1)}  with $\phi(x)=2^{jn}\psi(2^{-j} x)$, $f= u_{j+k},$ $A=2^{j},$ $R=2^{k +  k_0}$,  $\ex>b$ and $  n /\min(1,p/\tau_w)<b<n+s;$ note that such $b$ exists since $s>\tau_p(w).$ We get
\begin{align*}
 \norm{\Delta^\psi_j  u_{j+k}}{L^p(w)}  \lesssim 2^{(k + k_0)(b-n)} 2^{- j  n} \norm{(1 + |2^j \cdot|)^\ex 2^{jn}\psi(2^{-j}\cdot)}{L^\infty} \norm{u_{j+k}}{L^p(w)}\sim 2^{k(b-n)}   \norm{u_{j+k}}{L^p(w)},
\end{align*}
and setting $p^*:=\min(p,1)$ we obtain
\begin{align*}
2^{js p^*}\norm{\Delta^\psi_j  u}{L^p(w)}^{p^*} \lesssim 2^{js p^*}\sum_{k = - k_0}^\infty \norm{\Delta^\psi_j  u_{j+k}}{L^p(w)}^{p^*}
 =  \sum_{k = - k_0}^\infty 2^{k(b-n -s) p^*} 2^{s p^* (j+k)}   \norm{u_{j+k}}{L^p(w)}^{p^*}.
\end{align*}
Hence, applying Lemma~\ref{eq:seriesineq}, it follows that
\begin{align*}
\norm{u}{\besw{p}{s}{q}{w}}  \lesssim  \norm{\left\{ \sum_{k = - k_0}^\infty 2^{k(b-n -s) p^*}  2^{s p^* (j+k)}   \norm{u_{j+k}}{L^p(w)}^{p^*}\right\}_{j\in\ent}}{\ell^{q/p*}}^\frac{1}{p^*}\lesssim  \norm{\{2^{js} u_j\}_{j\in\ent}}{\ell^{q}(L^p(w))}, 
\end{align*}
as desired.

We next show the theorem for any not necessarily finite family. For ease of notation, we only work in the context of weighted homogeneous Triebel--Lizorkin spaces; the reasoning is identical for the other settings.  Let $\{u_j\}_{j\in \ent},$ $w,$ $p,$ $q,$ and $s$ be as in the hypotheses. Define $U_N:=\sum_{k=-N}^N u_j;$  since the theorem is true for finite families and, for $M<N,$ $\{u_j\}_{M+1\le \abs{j}\le N}$ satisfies the hypotheses of the theorem, we have
\begin{equation}\label{eq:convergence}
\norm{U_N-U_M}{\tlw{p}{s}{q}{w}}\lesssim \norm{\{2^{js} u_j\}_{M+1\le \abs{j}\le N}}{\lebw{p}{w}(\ell^q)},
\end{equation}
where the implicit constant is independent of $M,$ $N$ and the family $\{u_j\}_{j\in\ent}.$ 

If $0<q<\infty,$ as $M,N\to\infty,$  the right-hand side of \eqref{eq:convergence} tends to zero by the assumption $\norm{\{u_j\}_{j\in \ent}}{\lebw{p}{w}(\ell^q)} <\infty$ and the dominated convergence theorem; therefore, since $\tlw{p}{s}{q}{w}$ is complete, $\sum_{j\in\ent} u_j$ converges in $\tlw{p}{s}{q}{w}.$ The same reasoning used to obtain \eqref{eq:convergence} gives that
\[
\norm{U_N}{\tlw{p}{s}{q}{w}}\lesssim \norm{\{2^{js} u_j\}_{-N\le j\le N}}{\lebw{p}{w}(\ell^q)},
\]
where the implicit constant is independent of $N$ and the family $\{u_j\}_{j\in\ent}.$ It then  follows that
\[
\norm{\sum_{j\in\ent} u_j}{\tlw{p}{s}{q}{w}}\lesssim \norm{\{2^{js} u_j\}_{j\in\ent}}{\lebw{p}{w}(\ell^q)},
\]
with the implicit constant  independent of  the family $\{u_j\}_{j\in\ent}.$

If $q=\infty,$  we use that $\{2^{(s-\varepsilon)j}u_j\}_{j\ge 0}$ and $\{2^{(s+\varepsilon)j}u_j\}_{j< 0}$ belong to $\ell^1(L^p(w))$ for any $\varepsilon>0$ and apply Theorem~\ref{thm:Nikolskij:weighted} under the case of finite $q$ to conclude that $\sum_{j=0}^Nu_j$ and $\sum_{j=-N}^{-1}u_j$ converge in $\besw{p}{s-\varepsilon}{1}{w}$ and $\besw{p}{s+\varepsilon}{1}{w},$ respectively (choosing $\varepsilon>0$ so that $s-\varepsilon>\tau_{p,q}(w)\ge \tau_p(w)$). Therefore, $U_N$ convergence in $\mathcal{S}'_0(\rn).$ Moreover, by  Theorem~\ref{thm:Nikolskij:weighted} applied to the finite sequence $\{u_j\}_{-N\le j\le N},$ we have that $U_N\in \tlw{p}{s}{\infty}{w}$ and 
\[
\norm{U_N}{\tlw{p}{s}{\infty}{w}}\lesssim \norm{\{2^{js} u_j\}_{-N\le j\le N}}{\lebw{p}{w}(\ell^\infty)}\le \norm{\{2^{js} u_j\}_{j\in \ent}}{\lebw{p}{w}(\ell^\infty)},
\]
with the implicit constant independent of $N$ and $\{u_j\}_{j\in\ent}.$
Since $\tlw{p}{s}{\infty}{w}$ has the Fatou property (see Remark~\ref{re:app}), we conclude that $\lim_{N\to \infty} U_N=\sum_{j\in\ent}u_j$ belongs to $\tlw{p}{s}{\infty}{w}$ and 
\[
\norm{\sum_{j\in\ent}u_j}{\tlw{p}{s}{\infty}{w}}\lesssim  \norm{\{2^{js} u_j\}_{j\in \ent}}{\lebw{p}{w}(\ell^\infty)}.
\]
\end{proof}

\begin{remark}\label{re:app} As stated in Section~\ref{sec:more}, a Nikol'skij representation theorem holds true for Triebel--Lizorkin and Besov spaces based on weighted Lorentz spaces, weighted Morrey spaces and variable Lebesgue spaces. We next make some remarks concerning the proofs of the corresponding versions of Theorem~\ref{thm:Nikolskij:weighted} in such settings:
\begin{enumerate}[(a)] 

\item Regarding the proof of Part~\eqref{item:thh:Nikolskij:TL} of Theorem~\ref{thm:Nikolskij:weighted} (for instance, in the inhomogeneous case) the fact that $\norm{\{2^{js} u_j\}_{M+1\le \abs{j}\le N}}{\lebw{p}{w}(\ell^q)}$ converges to zero, as $M,N\to\infty,$  when $q$ is finite, allows to conclude that $\sum_{j\in\naz} u_j$ converges in $\itlw{p}{s}{q}{w}$ through the use of \eqref{eq:convergence}. Under the hypothesis of  Part~\eqref{item:thh:Nikolskij:TL} for $\mathcal{X}=\lebw{p,t}{w}$ with $0<p,t<\infty$ or $\mathcal{X}=\Lp$ with $\pp\in \P_0$ and $q$  finite,  it holds that 
\begin{equation}\label{eq:cauchyseq}
\norm{\{2^{js} u_j\}_{M+1\le \abs{j}\le N}}{\mathcal{X}(\ell^q)}\to 0\quad \text{as } M,N\to\infty;
\end{equation}
therefore, $\sum_{j\in\naz} u_j$ converges in $\itlw{(p,t)}{s}{q}{w}$ and  $\itl{\pp}{s}{q},$  respectively.   The fact \eqref{eq:cauchyseq} is a consequence of a dominated convergence type theorem in $\mathcal{X}$ and the corresponding assumptions in Part~\eqref{item:thh:Nikolskij:TL}. For the indices for which \eqref{eq:cauchyseq} does not necessarily hold under the corresponding assumptions in Part~\eqref{item:thh:Nikolskij:TL} ($t=\infty$ or $q=\infty$ when  $\mathcal{X}=\lebw{p,t}{w},$ $0<p\le t<\infty$ and $0<q\le \infty$ when  $\mathcal{X}=\mow{p}{t}{w},$ $q=\infty$ when  $\mathcal{X}=\Lp$), the convergence of $\sum_{j\in\naz} u_j$ holds in $\swp$ rather than in  $\itlw{(p,t)}{s}{q}{w},$ $\itlw{[p,t]}{s}{q}{w}$ or  $\itl{\pp}{s}{q},$ respectively. Regarding Part~\eqref{item:thh:Nikolskij:B}, the counterpart of \eqref{eq:cauchyseq} is
\begin{equation*}
\norm{\{2^{js} u_j\}_{M+1\le \abs{j}\le N}}{\ell^q(\mathcal{X})}\to 0\quad \text{as }M,N\to\infty,
\end{equation*}
which is always true under the corresponding assumptions of Part~\eqref{item:thh:Nikolskij:B} as long as $q$ is finite, in which case the convergence of $\sum_{j\in\naz} u_j$ holds in the corresponding $\mathcal{X}$-based Besov space. If $q=\infty,$ the convergence is in $\swp$ rather than in the $\mathcal{X}$-based Besov space.

\item  The last part of the proof of Theorem~\ref{thm:Nikolskij:weighted} uses the Fatou property of Triebel--Lizorkin and Besov spaces.  Let  $\mathcal{A}$ be a quasi-Banach space such that $\sw\hookrightarrow\mathcal{A}\hookrightarrow\swp$ (or $\swz\hookrightarrow\mathcal{A}\hookrightarrow\mathcal{S}_0'(\rn)$). The space $\mathcal{A}$ is said to  have the Fatou property if  for every sequence $\{f_j\}_{j\in \na}\subset \mathcal{A}$ that converges in $\swp$ ($\mathcal{S}_0'(\rn),$  respectively), as $j\to \infty,$ and that satisfies $\liminf_{j\to\infty} \norm{f_j}{\mathcal{A}}<\infty,$ it follows that $\lim_{j\to\infty}f_j\in \mathcal{A}$ and $\norm{\lim_{j\to\infty}f_j}{\mathcal{A}}\lesssim \liminf_{j\to\infty} \norm{f_j}{\mathcal{A}},$ where the implicit constant is independent of $\{f_j\}_{j\in\na}.$ 

It can be shown, using standard proofs, that Triebel--Lizorkin and Besov spaces based on a quasi-Banach space $\mathcal{X}$ of measurable functions (i.e. $\itl{\mathcal{X}}{s}{q},$ $\ibes{\mathcal{X}}{s}{q}$ and their homogeneous counterparts) posses the Fatou property for any $s\in \re$ and $0<q\le \infty$ if $\mathcal{X}$ satisfies the following properties: (1) if $f,g\in \mathcal{X}$ and $\abs{f}\le\abs{g}$ pointwise a.e., then $\norm{f}{\mathcal{X}}\lesssim \norm{g}{\mathcal{X}};$ (2) if $\{f_j\}_{j\in \na}\subset \mathcal{X}$ and $f_j\ge 0$ poinwise a.e., then $\norm{\liminf_{j\to\infty} f_j}{\mathcal{X}}\lesssim \liminf_{j\to\infty}\norm{f_j}{\mathcal{X}}.$ Given a weight $w,$  properties (1) and (2) are easily verified  for $L^p(w)$ if $0<p\le \infty,$ $L^{p,t}(w)$ if $0<p<\infty,$ $0<t\le \infty,$ $M^{t}_p(w)$ if $0<p\le t<\infty;$ they also hold for $L^{\pp}$ if $\pp\in \P_0,$ as shown in \cite[Theorem 2.61]{MR3026953}. As a consequence, all the Triebel--Lizorkin and Besov spaces considered in the statements of the theorems in Sections~\ref{sec:results} and \ref{sec:more} have the Fatou--Property.

\end{enumerate}
\end{remark}

\def\ocirc#1{\ifmmode\setbox0=\hbox{$#1$}\dimen0=\ht0 \advance\dimen0
  by1pt\rlap{\hbox to\wd0{\hss\raise\dimen0
  \hbox{\hskip.2em$\scriptscriptstyle\circ$}\hss}}#1\else {\accent"17 #1}\fi}

\end{document}